\newtheorem{thm}{Theorem}[section]
\newtheorem{lem}[thm]{Lemma}
\newtheorem{nota}[thm]{Notation}
\newtheorem{defi}[thm]{Definition}
\newtheorem{prop}[thm]{Proposition}
\newtheorem{rema}[thm]{Remark}
\newenvironment{proof}{\noindent {\bf Proof \phantom{9}}}
{\hfill $\square$ \vspace{0.20cm}}
\def\theequation{\thesection.\arabic{equation}}
\def\be{\begin{eqnarray}}
\def\ee{\end{eqnarray}}
\def\ben{\begin{eqnarray*}}
\def\een{\end{eqnarray*}}
\def\numero{\refstepcounter{equation} (\theequation)}
\def\ala{\nonumber\\}
\let\text=\textstyle
\newcommand{\sm}{{s-}}
\newcommand{\intot}{\displaystyle \int _0^t }
\newcommand{\indiq}{{\bf 1}}
\newcommand{\rit}{\mathbb{R}}
\newcommand{\nit}{\mathbb{N}}
\newcommand{\dit}{\mathbb{D}}
\def\me{\medskip \noindent}
\def\bi{\bigskip \noindent}
\title{\bf A host-parasite multilevel  interacting  process
and continuous approximations}
\author{Sylvie M\'el\'eard$^{1}$ and Sylvie R\oe lly$^{2,3}$}
\date{\today}
\begin{document}

\maketitle

\begin{center}
 \makeatletter\renewcommand{\@makefnmark}{\mbox{$^{\@thefnmark}$}}\makeatother
 \footnotetext[1]{Centre de
  Math\'ematiques Appliqu\'ees, UMR C.N.R.S. 7641, \'Ecole
  Polytechnique, 91128 Palaiseau Cedex, France. Email: sylvie.meleard@polytechnique.edu}
\footnotetext[2]{ Institut f\"ur Mathematik der Universit\"at Potsdam, Am Neuen Palais 10,
14469 Potsdam, Germany. Email: roelly@math.uni-potsdam.de}
\footnotetext[3]{On leave of absence Centre de
  Math\'ematiques Appliqu\'ees, UMR C.N.R.S. 7641, \'Ecole
  Polytechnique, 91128 Palaiseau Cedex, France.}
 \makeatletter\renewcommand{\@makefnmark}{}\makeatother
 \makeatletter\renewcommand{\@makefnmark}{\mbox{$^{\@thefnmark}$}}\makeatother
\end{center}

\begin{abstract}
 We are interested in modeling some two-level
 population dynamics,
 resulting from the interplay of ecological interactions and phenotypic variation of
 individuals (or hosts) and the
 evolution of  cells (or parasites) of two types  living in these
 individuals. The ecological parameters of the
 individual dynamics depend on the number of cells of each type contained by the individual and the cell dynamics depends 
on the trait of the invaded
 individual. 
 
 Our models are rooted in the
 microscopic description of a random (discrete) population of 
 individuals characterized by one or several adaptive traits and cells characterized by their type. 
The population is modeled as a stochastic point process whose generator
 captures the probabilistic dynamics over continuous time of birth,
 mutation and death for individuals and birth and death for
 cells. 
The interaction between individuals (resp. between cells) is described by a competition between individual traits
 (resp. between cell types).
 We look for tractable large population approximations. By combining various
 scalings on population size, birth and death rates and  mutation step, the single microscopic model is shown to lead to contrasting nonlinear
 macroscopic limits of different nature: deterministic approximations, in the form of ordinary, integro- or partial
 differential equations, or probabilistic ones, like stochastic partial differential equations or superprocesses.
 The study of the long time behavior of these processes  seems very hard and we only develop some simple cases enlightening the difficulties involved.  \end{abstract}

\bigskip

\emph{Key-words:} two-level interacting processes,
birth-death-mutation-competition point process, host-parasite
stochastic particle system,  nonlinear integro-differential
equations, nonlinear partial differential equations,
superprocesses.


\section{Introduction}
\label{sec:intro}

In this paper, we are interested in describing the adaptive
effects in a host-parasite system. We model two-level
 population dynamics 
 resulting from the interplay of ecological interactions and phenotypic variation of
 individuals (or hosts) and the
 evolution of  cells (or parasites) of two types  living in these
 individuals. In one hand, the ecological parameters of the
 individual dynamics depend on its number of cells of each type.
 In another hand, the cells develop their own birth and death dynamics and
 their ecological parameters depend on the trait of the
invaded  individual. 

\me We consider more precisely  the following two-level population. The first  level is composed of 
individuals governed by a mutation-birth and death process. Moreover each
individual is a collection of cells of two types (types 1
and 2) which have their own dynamics and compose the second level. The model  can easily be generalized to cells with a
finite number of different types. We denote by $n_1^i$ (resp. by
$n_2^i$) the number of cells of type 1 (resp. of type 2)
living in the individual $i$.  This individual is moreover characterized 
by a continuous quantitative phenotypic trait $x^i$.
The individual $\ i\ $ can be removed or copied according a
birth-and-death process depending on $x^i, n_1^i, n_2^i$.
An offspring usually inherits the trait values of her progenitor except when  a  mutation  occurs during the reproduction
mechanism. In this case the offspring makes an instantaneous mutation step at birth to new trait value.  The death of an individual can be natural or can be due to the competition exerted by the other individuals, for example sharing food.  This competition between individuals through their traits will induce a nonlinear convolution term. 
The cells in the individual $i$ also reproduce and remove
according to another birth-and-death process, depending on $x^i,
n_1^i, n_2^i$. At this second level, cell competition occurs and its pressure depends on the invaded individual trait.

\me Our model generalizes  some approach firstly developed in Dawson
and Hochberg \cite{Dawson:91} and in Wu \cite{Wu:93},
\cite{Wu:95}. In these papers,  a two-level system is studied:
individuals and cells follow a branching dynamics but there is no interaction  between individuals and between cells. Thus all the specific techniques these authors use - as Laplace transforms - are no more available for our model.

\me In this paper, we firstly rigorously construct the underlying mathematical
model and  prove its existence. Thus we obtain moment and martingale properties which are the key point to deduce
 approximations for large individual and cell populations. 
  By combining various scalings on population size, birth and death rates and  mutation step, the single microscopic model is shown to lead to contrasting macroscopic limits
 of different nature: deterministic approximations, in the form of ordinary, integro- or partial
 differential equations, or probabilistic ones, like stochastic partial
 differential equations or superprocesses.
 The study of the long time behavior  of these processes seems very hard and we only develop some simple cases enlightening the difficulties involved.  

\section{Population point process}
\label{sec:ppp}
\subsection{The model}

We model the evolving population by a stochastic interacting
individual system, where each individual $i$ is characterized by a
vector phenotypic trait value $x^i $ and by the number of its cells of
type 1, $n_1^i$, and of type 2, $n_2^i$. The trait space ${\cal X}$ is assumed to be
a compact subset of $\rit^d$, for some $d\geq 1$.
We denote by $M_F=M_F({\cal X}\times \nit\times \nit)$ the
set of finite non-negative measures on ${\cal X}\times \nit\times
\nit$, endowed with the weak topology.
Let also ${\cal M}$ be the subset of $M_F$ consisting of
all finite point measures:
\begin{equation*}
 {\cal M} = \left\{ \sum_{i=1}^I \delta_{(x^i, n_1^i, n_2^i)} , \; 
   x^i \in {\cal X}, (n_1^i, n_2^i)\in  \nit\times \nit,\
    1\leq i\leq I ,\;  I \in \nit \right\}.
\end{equation*}
Here and below $\delta_{(x,n_1,n_2)}$ denotes the Dirac mass at
$(x,n_1,n_2)$. In case where $I=0$, the measure is the null
measure.

\noindent Therefore, for a population modelled by $\nu=\sum_{i=1}^{I}\delta_{(x^i,n_1^i,n_2^i)}$, the
total number of its individuals is $\langle \nu,1\rangle=I$  and, if we denote by $n:=n_1+n_2$ the number of cells of an individual (irrespective of  type), then  $\langle
\nu,n\rangle=\sum_{i=1}^I(n_1^i+n_2^i)$ is the total number
of cells in the population $\nu$.\\

Let us now describe the two-level dynamics.
Any individual of the population with trait $x$ and cell state $(n_1,n_2)$  
follows a mutation-selection-birth-and-death dynamics with 
\begin{itemize}
   \item    birth (or reproduction) rate $B(x, n_1,n_2)$,
   \item  the reproduction is clonal with probability $1-p(x)$ (the offspring inherits the trait $x$),
   \item a mutation occurs with probability $p(x)$,
   \item  the mutant trait $x+z$ is distributed according the mutation kernel $M(x,z)\, dz$ which only weights $z$ such that $x+z\in {\cal X},$
      \item  death rate $D(x, n_1,n_2)+ \alpha(x,
   n_1,n_2)\sum_{j=1}^IU(x-x^j).$
\end{itemize}
   Thus the  interaction
    between individuals is modeled by a comparison between
    their respective trait values  described by the competition kernel $U$.
By simplicity,  the mutations parameters $p$ and $M$  are assumed to  be  only influenced by the trait $x$. They could also  depend on the cell composition $(n_{1}, n_{2})$ without inducing any additional technical difficulty. \\

\noindent Any cell of type $1$ (resp. of type $2$) inside an individual with trait $x$ and cell state $(n_1,n_2)$
follows a birth-and-death dynamics with 
\begin{itemize}
    \item  birth rate $b_1(x)$, (resp. $b_2(x)$),
    \item  death rate $d_1(x)+\beta_1(x)(n_1 \lambda_{11}+ n_2 \lambda_{12})$, (resp. $d_2(x)+\beta_2(x)(n_1 \lambda_{21}+ n_2
    \lambda_{22})$). 
\end{itemize}
The nonnegative parameters $\lambda_{11}$, $\lambda_{22}$, $\lambda_{12}$, $\lambda_{21}$  quantify the cell interactions. The rate functions $b_1$, $b_2$, $d_1$,  $d_2$, $\beta_1$, $\beta_2$
     are assumed to be continuous (and thus bounded on the compact set ${\cal X}$).\\

\noindent The population dynamics can be described by its possible transitions from a state $\nu$ to the following other states:\\

1 - Individual dynamics due to an individual with trait $x$ and cell state $(n_1,n_2)$:
\be
\nu &\mapsto & \nu +\delta_{(x, n_1,n_2)}  \mbox{ with rate } B(x,n_1,n_2)(1-p(x))\ ;\ala 
\nu &\mapsto & \nu -\delta_{(x,n_1,n_2)} \mbox{ with rate } D(x, n_1,n_2)+\alpha(x,
n_1,n_2)\sum_{j=1}^I U(x-x^j)\ ;\ala 
\nu &\mapsto& \nu +\delta_{(x+z,n_1,n_2)}  \mbox{ with rate }
B(x,n_1,n_2)\,p(x),  \mbox{ where } z \mbox{ is
distributed following } M(x,z)\, dz.\nonumber
\ee

2 - Cell dynamics:
\be
\nu &\mapsto & \nu +\delta_{(x, n_1+1,n_2)} -\delta_{(x, n_1,n_2)} \mbox{ with rate } b_1(x)\ ;\ala 
\nu &\mapsto & \nu +\delta_{(x, n_1,n_2+1)}-\delta_{(x, n_1,n_2)}
 \mbox{ with rate } b_2(x)\ ;\ala
 \nu &\mapsto& \nu +\delta_{(x, n_1-1,n_2)} -\delta_{(x, n_1,n_2)} \mbox{ with rate }
 d_1(x)+\beta_1(x)( \lambda_{11}n_1+ \lambda_{12}n_2 )\ ;\ala
\nu &\mapsto& \nu +\delta_{(x, n_1,n_2-1)} -\delta_{(x, n_1,n_2)}
 \mbox{ with rate } d_2(x)+\beta_2(x)( \lambda_{21}n_1+  \lambda_{22}n_2).
\nonumber
\ee

\noindent Let us now prove the existence of a c\`adl\`ag Markov process $(\nu_t)_{t\geq 0}$
belonging to $\dit(\rit_+,{\cal M})$ 
modeling the dynamics of such a discrete population. More precisely, we consider
\begin{equation}
 \label{pop}
 \nu_t = \sum_{i=1}^{I(t)} \delta_{(X^i(t),N_1^i(t), N_2^i(t))}
\end{equation}
where $I(t) \in {\mathbb{N}}$ stands for the number of individuals
alive at time $t$, $X^1(t),...,X^{I(t)}(t) \in {\cal X}$ describes the
 traits of these individuals at time $t$ and
$N_1^1(t),...,N_1^{I(t)}(t)$  (resp. $N_2^1(t),...,N_2^{I(t)}(t))$
are the  numbers of cells of type 1 (resp. of type 2) for the individuals alive at time $t$.\\
To write down the infinitesimal generator of $\nu$, we need an appropriate class of 
test functions. For bounded measurable functions $\phi$,
$f$, $g_1$, $g_2$ defined respectively on $\rit$, $\rit^d$,
$\nit$ and $\nit$, $\phi_{fg_1g_2}$ is given by 
\be
\label{phi}
\phi_{fg_1g_2}(\nu)&:=&\phi(<\nu,fg_1g_2>)=\phi\left(\int_{{\cal
X}\times \nit^2 } f(x)g_1(n_1)g_2(n_2)\nu(dx,dn_1,dn_2)\right)\ala
&=&\phi\left(\sum_{n_1,n_2\in \nit^2}\int_{\cal X}
f(x)g_1(n_1)g_2(n_2)\nu(n_1,n_2,dx)\right). 
\ee

\noindent The infinitesimal generator $L$ of the Markov process $(\nu_t, t\geq 0)$ applied to such
function $\phi_{fg_1g_2}$ is given by:

\begin{align}
\label{generator} &L\phi_{fg_1g_2}(\nu)=\notag \\
&\sum_{i=1}^{I}
 (\phi(\langle\nu,fg_1g_2\rangle+f(x^i)g_1(n^i_1)g_2(n_2^i))
 -\phi(\langle\nu,fg_1g_2\rangle))B(x^i,n_1^i,n_2^i)(1-p(x^i)) \notag \\
 &+\sum_{i=1}^{I}\int
 (\phi(\langle\nu,fg_1g_2\rangle+f(x^i+z)g_1(n^i_1)g_2(n_2^i))
 -\phi(\langle\nu,fg_1g_2\rangle))  B(x^i,n_1^i,n_2^i)\,p(x^i)M(x^i, z)dz \notag \\
&+\sum_{i=1}^{I}
 (\phi(\langle\nu,fg_1g_2\rangle-f(x^i)g_1(n^i_1)g_2(n_2^i))
 -\phi(\langle\nu,fg_1g_2\rangle))(D(x^i,n_1^i,n_2^i)+\alpha(x^i,n^i_1,n^i_2)U*\nu(x^i,n_1^i,n_2^i)) \notag \\
&+\sum_{i=1}^{I}
 (\phi(\langle\nu,fg_1g_2\rangle+f(x^i)(g_1(n^i_1+1)-g_1(n^i_1))g_2(n_2^i))
 -\phi(\langle\nu,fg_1g_2\rangle))b_1(x^i)n^i_1 \notag \\
&+\sum_{i=1}^{I}
 (\phi(\langle\nu,fg_1g_2\rangle+f(x^i)g_1(n^i_1)(g_2(n_2^i+1)-g_2(n_2^i)))
 -\phi(\langle\nu,fg_1g_2\rangle))b_2(x^i)n^i_2\notag \\
 &+\sum_{i=1}^{I}
 (\phi(\langle\nu,fg_1g_2\rangle+f(x^i)(g_1(n^i_1-1)-g_1(n^i_1))g_2(n_2^i))
 -\phi(\langle\nu,fg_1g_2\rangle))\notag \\&\hskip 5cm(d_1(x^i)+\beta_1(x^i) ( \lambda_{11}n^i_1+ \lambda_{12}n^i_2))n^i_1\notag \\
&+\sum_{i=1}^{I}
 (\phi(\langle\nu,fg_1g_2\rangle+f(x^i)g_1(n^i_1)(g_2(n_2^i-1)-g_2(n_2^i)))
 -\phi(\langle\nu,fg_1g_2\rangle))\notag \\&\hskip 5cm(d_2(x^i)+\beta_2(x^i)( \lambda_{21}n^i_1+ 
 \lambda_{22}n^i_2))n^i_2.
\end{align}
The three first terms of~(\ref{generator}) capture the effects of births and deaths of individuals of
the population and  the for last
terms that of the cells. The competition makes the death terms
nonlinear.

\subsection{Process construction}
\label{sec:constr}

Let us give a pathwise construction of a Markov process admitting $L$ as
infinitesimal generator.

\me {\bf Assumptions (H1):}

\me {\it There exist constants $\bar{B}$, $\bar{D}$, $\bar{G}$
$\bar{\alpha}$, $\bar{U}$ and $\bar{C}$ and a probability density
function $\bar{M}$ on $\rit^d$  such that for $x,z\in {\cal X}$, $n_1, n_2\in
\mathbb{R}_{+}$, 
\begin{align*}
 &B(x,n_1,n_2)  \leq \bar{B}\ ;\notag\\ 
 &D(x,n_1,n_2)\leq \bar{D}\,(n_1+n_2)= \bar{D}\, n \ ;\notag\\ 
 & \alpha(x,n_1,n_2) \leq \bar{\alpha}\,(n_1+n_2)= \bar{\alpha}\, n\ ;\notag\\ 
& U(x)\leq \bar{U}, \  M(x,z)\leq \bar{C}\bar{M}(z).
\end{align*}
}
\noindent Remark that the jump rate of an individual
with  $n$ cells in the population $\nu$  is then upper-bounded by a constant times $n\
(1+ \langle\nu,1\rangle)$ and that the cell jump rate of such 
individual is upper-bounded by a constant times $ n (1+ n)$. Thus the model presents  a double nonlinearity since the population jump rates may depend on the product of  the size of the population times the number of cells and  quadratically on the number of cells. 

\bigskip \noindent 
Let us now give a pathwise
description of the population process $(\nu_t)_{t\geq 0}$. 

\begin{nota}
 \label{defh}
We associate to any population state $\nu=\sum_{i=1}^I \delta_{(x^i, n_1^i, n_2^i)}\in {\cal M}$
the triplet 
 $H^i(\nu)=(X^i(\nu), N_1^i(\nu), N_2^i(\nu))$
 as the trait and state of the $i$th-individual, obtained by ordering all triplets 
 with respect to some arbitrary order  on $\mathbb{R}^d\times \nit\times \nit$ (
 for example the lexicographic order).
\end{nota}

\noindent  We now introduce the probabilistic objects we will need.

\begin{defi}
 \label{poisson}
 Let $(\Omega, {\cal F}, P)$ be a probability
 space on which we consider
 the following  independent random elements:
 \begin{description}
 \item[\textmd{(i)}] a ${\cal M}$-valued random variable $\nu_0$ (the
   initial distribution),
 \item[\textmd{(ii)}] A Poisson point measure
   $Q(ds,di,dz,d\theta)$ on $\rit_+ \times \mathbb{N}^* \times
   {\cal X} \times \rit_+$ with intensity measure $\: ds
   \left(\sum_{k\geq 1} \delta_k (di) \right)\bar{M}(z) dz d\theta\:$.
 \end{description}
 Let us denote by $( {\cal F}_t)_{t\geq 0}$ the canonical filtration
 generated by $\nu_0$ and $Q$.\\

 \me Let us finally define the quantities $\theta^i_1(s)$, $\theta^i_2(s)$,
 $\theta^i_3(s)$, $\theta^i_4(s)$, $\theta^i_5(s)$, $\theta^i_6(s)$,
 $\theta^i_7(s)$ related to  the different  jump rates at time $s$ as:
 \begin{eqnarray}
 \label{theta}
\theta^i_1(s)&=& B(H^i(\nu_\sm))
(1-p(X^i(\nu_\sm)));\nonumber\\
\theta^i_2(s)-\theta^i_1(s)&=&B(H^i(\nu_\sm)) p(X^i(\nu_\sm))
\frac{M(X^i(\nu_\sm), z)}{\bar{M}(z)};\nonumber\\
\theta^i_{3}(s)-\theta^i_2(s)&=& D(H^i(\nu_\sm)) +  \alpha(H^i(\nu_\sm))\
U*\nu_\sm(X^i(\nu_\sm));\nonumber\\
\theta^i_4(s)-\theta^i_3(s)&=&b_1(X^i(\nu_\sm))\ N_1^i(\nu_\sm);\nonumber\\ \theta^i_5(s)-\theta^i_4(s)&=&b_2(X^i(\nu_\sm))\ N_2^i(\nu_\sm);\nonumber\\
\theta^i_6(s)-\theta^i_5(s)&=&d_1(X^i(\nu_\sm))+\beta_1(X^i(\nu_\sm))
(N_1^i(\nu_\sm)\lambda_{11} + N_2^i(\nu_\sm)\lambda_{12})\ N_1^i(\nu_\sm);\nonumber\\
\theta^i_7(s)-\theta^i_6(s)&=&d_2(X^i(\nu_\sm))+\beta_2(X^i(\nu_\sm))
(N_1^i(\nu_\sm)\lambda_{21} + N_2^i(\nu_\sm)\lambda_{22})\ N_2^i(\nu_\sm).\nonumber\\
\
\end{eqnarray}
\end{defi}

\noindent We finally define the population process in terms of these stochastic
objects.

\begin{defi}
 \label{dbpe}
 Assume $(H1)$. A $( {\cal F}_t)_{t\geq 0}$-adapted stochastic process
 $\nu=(\nu_t)_{t\geq 0}$  is called  a population process if a.s., for all $t\geq 0$,

 \begin{align}
   \nu_t &= \nu_0 + \int_{(0,t] \times \mathbb{N}^*
     \times
     {\cal X}\times\rit_+}\bigg\{\delta_{(X^i(\nu_\sm),N_1^i(\nu_\sm),N_2^i(\nu_\sm))}
     \indiq_{\{i \leq \left<
       \nu_\sm,1 \right>\}}  \indiq_{\left\{\theta \leq
      \theta^i_1(s)\right\}}
   \notag \\ &+ \delta_{(X^i(\nu_\sm)+z ,N_1^i(\nu_\sm),N_2^i(\nu_\sm))}
   \indiq_{\{i \leq \left<
       \nu_\sm,1 \right>\}} \indiq_{\left\{\theta^i_1(s)\leq \theta \leq
       \theta^i_2(s)
     \right\}} \notag \\ &- \delta_{(X^i(\nu_\sm),N_1^i(\nu_\sm),N_2^i(\nu_\sm))}
      \indiq_{\{i \leq
     \left< \nu_\sm,1 \right>\}} \indiq_{\left\{\theta^i_2(s)\leq \theta
     \leq\theta^i_3(s)
     \right\}}
      \notag \\ &+\bigg(\delta_{(X^i(\nu_\sm),N_1^i(\nu_\sm)+1,N_2^i(\nu_\sm))}
      -\delta_{(X^i(\nu_\sm),N_1^i(\nu_\sm),N_2^i(\nu_\sm))}\bigg)\indiq_{\{i \leq
     \left< \nu_\sm,1 \right>\}} \indiq_{\left\{\theta^i_3(s)\leq\theta \leq \theta^i_4(s)\right\}}
     \notag \\ &+\bigg(\delta_{(X^i(\nu_\sm),N_1^i(\nu_\sm),N_2^i(\nu_\sm)+1)}
      -\delta_{(X^i(\nu_\sm),N_1^i(\nu_\sm),N_2^i(\nu_\sm))}\bigg)\indiq_{\{i \leq
     \left< \nu_\sm,1 \right>\}} \indiq_{\left\{\theta^i_4(s)\leq\theta \leq \theta^i_5(s)\right\}}
     \notag \\ &+\bigg(\delta_{(X^i(\nu_\sm),N_1^i(\nu_\sm)-1,N_2^i(\nu_\sm))}
      -\delta_{(X^i(\nu_\sm),N_1^i(\nu_\sm),N_2^i(\nu_\sm))}\bigg)\indiq_{\{i \leq
     \left< \nu_\sm,1 \right>\}}
      \indiq_{\left\{\theta^i_5(s)\leq\theta \leq \theta^i_6(s)
     \right\}}
     \notag \\ &+\bigg(\delta_{(X^i(\nu_\sm),N_1^i(\nu_\sm),N_2^i(\nu_\sm)-1)}
      -\delta_{(X^i(\nu_\sm),N_1^i(\nu_\sm),N_2^i(\nu_\sm))}\bigg)\indiq_{\{i \leq
     \left< \nu_\sm,1 \right>\}} \indiq_{\left\{\theta^i_6(s)\leq \theta
     \leq\theta^i_7(s)
    \right\}}\bigg\}\notag \\
      &\hskip 12cm Q(ds,di,dz,d\theta)\notag \\
      \
   \label{bpe}
 \end{align}
\end{defi}

\noindent Let us now show that if $\nu$ solves~(\ref{bpe}), then $\nu$
follows the Markovian dynamics we are interested in.

\begin{prop}
 \label{gi}
 Assume $(H1)$ and consider a process $(\nu_t)_{t\geq 0}$ defined by 
  (\ref{bpe}) such that for all $T>0$, $\mathbb{E} (\sup_{t\leq
   T}\langle\nu_t, 1\rangle^3)<+\infty $ 
and $\mathbb{E} (\sup_{t\leq T}\langle\nu_t,n^2\rangle)<+\infty$.
 Then  $(\nu_t)_{t\leq 0}$ is a Markov process. Its infinitesimal generator
 $L$ applied to any bounded and measurable maps $\phi_ {fg_1g_2}: {\cal
   M}\mapsto \mathbb{R}$ and  $\nu \in {\cal M}$  satisfies (\ref{generator}). In particular, the law of $(\nu_t)_{t\geq 0}$
 does not depend on the chosen order in Notation \ref{defh}.
\end{prop}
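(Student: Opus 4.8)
The plan is to verify that a process $\nu$ satisfying the stochastic equation~(\ref{bpe}) is Markov with the announced generator, by applying It\^o's formula for pure-jump processes driven by the Poisson point measure $Q$ and then reading off the generator from the compensated decomposition. First I would fix a bounded measurable test function $\phi_{fg_1g_2}$ and compute $\phi_{fg_1g_2}(\nu_t)$ by summing the jumps of $\langle\nu_s,fg_1g_2\rangle$ recorded by $Q$. Because $Q$ is a Poisson point measure, each atom $(s,i,z,\theta)$ produces exactly one of the seven transitions, selected by the nested indicators $\indiq_{\{i\leq\langle\nu_{\sm},1\rangle\}}$ and $\indiq_{\{\theta^i_{k-1}(s)\leq\theta\leq\theta^i_k(s)\}}$ built from~(\ref{theta}). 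Thus
\begin{equation*}
\phi_{fg_1g_2}(\nu_t)=\phi_{fg_1g_2}(\nu_0)+\int_{(0,t]\times\nit^*\times{\cal X}\times\rit_+}\Delta_{s,i,z,\theta}\,Q(ds,di,dz,d\theta),
\end{equation*}
where $\Delta_{s,i,z,\theta}$ is the increment of $\phi_{fg_1g_2}$ caused by the corresponding jump; the seven increments are exactly the seven differences $\phi(\langle\nu,fg_1g_2\rangle+\cdots)-\phi(\langle\nu,fg_1g_2\rangle)$ appearing in~(\ref{generator}).

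Next I would compensate this integral. The intensity of $Q$ is $ds\,(\sum_{k\geq1}\delta_k(di))\,\bar M(z)\,dz\,d\theta$, so integrating $\Delta_{s,i,z,\theta}$ against this intensity over $i$, $z$ and $\theta$ reproduces the generator: summing over $i\leq I(s-)$ handles $di$; integrating in $\theta$ over each interval $[\theta^i_{k-1}(s),\theta^i_k(s)]$ returns its length, which by~(\ref{theta}) is precisely the rate attached to the $k$-th transition (for the mutation term the extra factor $M(X^i,z)/\bar M(z)$ combines with $\bar M(z)\,dz$ to give the kernel $M(X^i,z)\,dz$ as in~(\ref{generator})). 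This yields
\begin{equation*}
\phi_{fg_1g_2}(\nu_t)=\phi_{fg_1g_2}(\nu_0)+\int_0^t L\phi_{fg_1g_2}(\nu_s)\,ds+\mathcal{M}_t,
\end{equation*}
with $\mathcal{M}_t$ the stochastic integral against the compensated measure $\tilde Q$. I would then check that $\mathcal{M}$ is a genuine $({\cal F}_t)$-martingale: its predictable quadratic variation is the integral of $\Delta^2$ against the intensity, which is dominated, using boundedness of $\phi$ and the rate bounds in $(H1)$, by a constant times $\int_0^t(\langle\nu_s,1\rangle\langle\nu_s,n\rangle+\langle\nu_s,n^2\rangle)\,ds$; the two moment hypotheses $\mathbb{E}(\sup_{t\leq T}\langle\nu_t,1\rangle^3)<\infty$ and $\mathbb{E}(\sup_{t\leq T}\langle\nu_t,n^2\rangle)<\infty$ guarantee this is integrable, so $\mathcal{M}$ is square-integrable and centered. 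Having the Dynkin formula $\mathbb{E}\phi_{fg_1g_2}(\nu_t)=\mathbb{E}\phi_{fg_1g_2}(\nu_0)+\mathbb{E}\int_0^t L\phi_{fg_1g_2}(\nu_s)\,ds$ for a separating class of test functions gives the Markov property together with the identification of $L$; the order-independence in Notation~\ref{defh} follows because $L$ in~(\ref{generator}) is manifestly a symmetric sum over individuals, insensitive to any relabeling.

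The main obstacle I anticipate is not the algebra of matching intervals to rates but the \emph{integrability bookkeeping} needed to legitimately pass from the raw Poisson integral to the compensated one. The rates here carry the double nonlinearity emphasized after $(H1)$: individual rates scale like $n(1+\langle\nu,1\rangle)$ and cell rates like $n(1+n)$, so one must confirm that $\int_0^t\!\int L$-type integrands are absolutely integrable before splitting off the compensator, and that no exchange-of-sum-and-integral step is unjustified. This is exactly where the imposed third moment on $\langle\nu_t,1\rangle$ and second moment on $\langle\nu_t,n^2\rangle$ (i.e.\ a control of $\langle\nu_t,n\rangle^2$) are consumed. Concretely I would first establish the estimates for $\phi$ bounded with bounded support considerations removed by the a priori moment bounds, then argue the martingale property via a localizing sequence of stopping times $T_p=\inf\{t:\langle\nu_t,1\rangle+\langle\nu_t,n\rangle\geq p\}$ and remove the localization using the uniform moment bounds and dominated convergence. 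Once the martingale problem is shown well posed on the test class $\{\phi_{fg_1g_2}\}$, the Markov property is standard.
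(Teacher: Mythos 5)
Your proposal follows essentially the same route as the paper: decompose $\phi_{fg_1g_2}(\nu_t)$ through the jumps recorded by the Poisson measure $Q$ in~(\ref{bpe}), match the $\theta$-interval lengths from~(\ref{theta}) to the transition rates, and use the two moment hypotheses (via the same bound $\langle\nu,n\rangle\langle\nu,1\rangle\leq\tfrac12(\langle\nu,n^2\rangle+\langle\nu,1\rangle^3)$ that controls the nonlinear death term) to justify integrability and read off the generator~(\ref{generator}). The paper is terser --- it stops at differentiability of the expectation at $t=0$ rather than writing out the compensated martingale decomposition --- but the substance is identical and your version is correct.
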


\begin{proof}
 The fact that $(\nu_t)_{t\geq 0}$ is a Markov process is immediate.
 Let us now consider a function $\phi_ {fg_1g_2}$ as in the statement.
   Using  the decomposition (\ref{bpe}) of the measure $\nu_t$ and the fact that 
\be
\label{ito}
\phi_{fg_1g_2}(\nu_t) = \phi_{fg_1g_2}(\nu_0) + \sum_{s\leq t}
 (\phi_{fg_1g_2}(\nu_\sm + (\nu_s-\nu_\sm))-\phi_ {fg_1g_2}(\nu_\sm)) \quad \textrm{a.s.} ,
\ee
we get a decomposition of $\phi_{fg_1g_2}(\nu_t)$.  

 \noindent Thanks to the moment assumptions,
 $\phi_{fg_1g_2}(\nu_t)$ is integrable. Let us check it for the nonlinear individual death term (which is the more delicate to deal with):
\begin{align*} &\mathbb{E}\bigg(  \int_{(0,t] \times \mathbb{N}^*
     \times
     {\cal X}\times\rit_+} \big( \phi(\langle \nu_\sm- \delta_{(X^i(\nu_\sm),N_1^i(\nu_\sm),N_2^i(\nu_\sm))}, fg_{1} g_{2}\rangle - \phi(\langle \nu_\sm, fg_{1} g_{2}\rangle\big)
      \indiq_{\{i \leq
     \left< \nu_\sm,1 \right>\}} \\
     &\hskip 7cm \indiq_{\left\{\theta^i_2(s)\leq \theta
     \leq\theta^i_3(s)
     \right\}}  Q(ds,di,dz,d\theta)\bigg) \\
     &=\\
     & \mathbb{E}\bigg(  \int_{0}^t \langle \nu_{s}, \big( \phi(\langle \nu_s, fg_{1} g_{2}\rangle - f(x)g_{1}(n_{1}) g_{2}(n_{2})) - \phi(\langle \nu_s, fg_{1} g_{2}\rangle\big)
       (D(x,n_{1},n_{2}) +  \alpha(x,n_{1},n_{2})\
U*\nu_s(x))\rangle ds\bigg).
\end{align*}
Since $\phi$ is bounded and thanks to Assumption $(H1)$, the right hand side term will be finite as soon as
$$\mathbb{E}\left(\sup_{t\leq T} (\langle\nu_{t},n\rangle  +\langle\nu_{t},n\rangle \langle\nu_{t},1\rangle)\right)<\infty.$$
Remark firstly that  $\langle\nu,n\rangle \leq \langle\nu,n^2\rangle$. Moreover we get $n\langle\nu,1\rangle\leq 1/2(n^2+ \langle\nu,1\rangle^2)$ and thus 
\ben \langle\nu,n\rangle\, \langle\nu,1\rangle \leq 1/2(\langle \nu,n^2+ \langle\nu,1\rangle^2\rangle)= 1/2(\langle \nu,n^2\rangle + \langle\nu,1\rangle^3).
\een
 The moment assumptions allow us to conclude and to show that
 the expectation is 
 differentiable in time at $t=0$. It leads to~(\ref{generator}).
\end{proof}

\medskip
\noindent Let us show existence and moment properties for the population
process.

\begin{thm}\label{existence} Assume~(H1).

\textmd{(i)} If $\ \mathbb{E} \left( \left<\nu_0, 1 \right> \right)
<+\infty$, then the process $(\nu_t)_{t}$ introduced in Definition \ref{dbpe}
   is well defined on $\rit_+$.

\textmd{(ii)} Furthermore, if for some $p\geq 1$, $\mathbb{E} \left(
     \left<\nu_0,1 \right>^p \right) <+\infty$, then for any
   $T<\infty$,
   \be
     \label{lp}
     \mathbb{E} (\sup_{t\in[0,T]} \left< \nu_t,1\right>^p ) <+\infty.
    \ee

 \textmd{(iii)}  If moreover  
$\mathbb{E}  \left( \left<\nu_0,n^2 \right> \right) <+\infty$, then for any
   $T<\infty$,
   \be
   \label{n2}
\ \mathbb{E}  (\sup_{t\in[0,T]} \left< \nu_t,n^2\right> ) <+\infty.
   \ee
   \end{thm}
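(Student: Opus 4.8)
The plan is to follow the now-standard route for such Poisson-driven measure-valued birth-death systems: build $(\nu_t)$ pathwise by iterating over the atoms of $Q$, identify the explosion time, prove non-explosion through a Lyapunov estimate, and finally propagate moments by a localized Gronwall argument. The whole scheme rests on two structural facts that I would isolate first. Because an individual reproduces (clonally or with mutation) at total rate $B(x,n_1,n_2)\leq\bar B$ irrespective of its cell content, the number of individuals $\langle\nu_t,1\rangle$ increases by one at per-capita rate at most $\bar B$ and only decreases at deaths; hence it is stochastically dominated by a Yule process of parameter $\bar B$ issued from $\langle\nu_0,1\rangle$, which is non-explosive and has all moments finite. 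Secondly, applying the generator to $V(\nu):=\langle\nu,1\rangle+\langle\nu,n\rangle$ and discarding the nonpositive death contributions yields the linear bound $LV(\nu)\leq C\,V(\nu)$, since each individual birth adds its $n^i$ cells at rate $\leq\bar B$ and each cell birth adds one cell at rate $\leq\bar b\,n^i$.

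For (i) I would construct $\nu$ step by step: between consecutive atoms of $Q$ lying in the active region $\{i\leq\langle\nu_{s-},1\rangle,\ \theta\leq\theta_7^i(s)\}$ the process is constant, and since that region carries finite $Q$-intensity the jump times $T_k$ are well separated; set $T_\infty=\lim_k T_k$. To show $T_\infty=\infty$, introduce $\tau_N=\inf\{t:V(\nu_t)\geq N\}$. On $[0,\tau_N)$ one has $\langle\nu,1\rangle<N$, $\langle\nu,n\rangle<N$ and $\langle\nu,n^2\rangle\leq\langle\nu,n\rangle^2<N^2$, so by Assumptions (H1) the total jump rate is bounded by $C'N^2$; thus only finitely many jumps occur before $\tau_N$ and $T_\infty\geq\sup_N\tau_N$. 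Conditioning on $\nu_0$ (a.s. a finite point measure, so $V(\nu_0)<\infty$), the bound $LV\leq CV$ together with the compensation formula gives $\mathbb{E}[V(\nu_{t\wedge\tau_N})\mid\nu_0]\leq V(\nu_0)e^{Ct}$, whence by the Markov inequality $P(\tau_N\leq t\mid\nu_0)\leq V(\nu_0)e^{Ct}/N\to0$. Therefore $\tau_N\to\infty$ a.s. and $T_\infty=\infty$, so $(\nu_t)$ is defined on all of $\rit_+$.

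For (ii) and (iii) I would estimate the suprema by the initial value plus the sum of the positive jumps, thereby avoiding any martingale/Doob estimate. Writing $\langle\nu_s,1\rangle^p$ as $\langle\nu_0,1\rangle^p$ plus its jumps, only individual births raise it, each by $(I+1)^p-I^p\leq C_p(1+I^{p-1})$ at total rate $\leq\bar B\,I$; hence $\sup_{s\leq t}\langle\nu_s,1\rangle^p\leq\langle\nu_0,1\rangle^p+\sum_{u\leq t}(\Delta_u)^+$, and compensating the positive jumps gives $\mathbb{E}\sup_{s\leq t\wedge\tau_N}\langle\nu_s,1\rangle^p\leq\mathbb{E}\langle\nu_0,1\rangle^p+C\int_0^t(1+\mathbb{E}\sup_{u\leq s\wedge\tau_N}\langle\nu_u,1\rangle^p)\,ds$; Gronwall and $N\to\infty$ (using $\tau_N\uparrow\infty$ and monotone convergence) give (ii). For (iii) the same scheme applied to $\langle\nu_s,n^2\rangle$ collects positive jumps only from individual births (which add $(n^i)^2$ at rate $\leq\bar B$) and from cell births (which add $2n^i+1$ at rate $\leq\bar b\,n^i$), both bounded by $C\langle\nu,n^2\rangle$ after using $n\leq n^2$; Gronwall then yields $\mathbb{E}\sup_{s\leq t}\langle\nu_s,n^2\rangle\leq\mathbb{E}\langle\nu_0,n^2\rangle\,e^{Ct}$.

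The main obstacle is exactly step (iii), because the cell death rates are quadratic in $n$ (the intra-host competition term $\beta_i(x)(\lambda_{i1}n_1+\lambda_{i2}n_2)$), so a careless bound on $L\langle\cdot,n^2\rangle$ would produce a cubic term and destroy the Gronwall structure. The point to verify carefully is that a type-$k$ cell death changes $n^2$ by $-2n+1\leq-1$ for $n\geq1$, so its large quadratic rate enters only with a stabilizing negative sign and contributes nothing to the positive-jump sum; only the linear-in-rate birth mechanisms survive, keeping the positive part of the generator linear in $\langle\nu,n^2\rangle$. Ensuring the localization is uniform in $N$ so that Fatou/monotone convergence applies when removing $\tau_N$ is the remaining bookkeeping.
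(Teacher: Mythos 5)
Your proof is correct and follows essentially the same route as the paper: localize with stopping times, bound $\sup_{s\le t}$ of the relevant functional by its initial value plus the sum of the positive jumps extracted from the Poisson integral, observe that the quadratic cell-death rates only produce negative jumps so the compensator of the positive part is linear, and conclude by Gronwall before letting the localization level tend to infinity. The only (minor) difference is that you run the non-explosion step with the Lyapunov function $\langle\nu,1\rangle+\langle\nu,n\rangle$, which makes the control of the cell-level jump rates before $\tau_N$ a bit more explicit than the paper's argument based on $\langle\nu,1\rangle$ alone.
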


\begin{proof}
We compute $ \phi(<\nu_t,1>)$ using \eqref{bpe} and \eqref{ito} for $f\equiv g_1\equiv g_2 \equiv 1$: we get
\begin{align}
   \phi(<\nu_t,1>) & = \phi(<\nu_0,1>)+\int_{(0,t] \times \mathbb{N}^*
     \times
     {\cal X}\times\rit_+}\bigg\{\left(\phi(<\nu_\sm,1>+1)
     - \phi(<\nu_\sm,1>) \right) \indiq_{\left\{\theta \leq
      \theta^i_2(s)\right\}}\notag\\&
     + \left(\phi(<\nu_\sm,1>-1)
     - \phi(<\nu_\sm,1>) \right)\indiq_{\left\{\theta^i_2(s) \leq \theta \leq
      \theta^i_3(s)\right\}}\bigg\} \indiq_{\{i \leq
     \left< \nu_\sm,1 \right>\}} Q(ds,di,dz,d\theta)
     \label{phinu}
   \end{align}
and for $g_1(n_1)=n_1$,
\begin{align}
   <\nu_t,n_1> & = <\nu_0,n_1>+\int_{(0,t] \times \mathbb{N}^*
     \times
     {\cal X}\times\rit_+} \bigg\{N^i_1(\nu_\sm)\bigg(\indiq_{\left\{\theta \leq
      \theta^i_2(s)\right\}}-
     \indiq_{\left\{\theta^i_2(s) \leq \theta \leq
      \theta^i_3(s)\right\}}\bigg)\notag\\ &+ \indiq_{\left\{\theta^i_3(s) \leq \theta \leq
      \theta^i_4(s)\right\}}-\indiq_{\left\{\theta^i_5(s) \leq \theta \leq
      \theta^i_6(s)\right\}}\bigg\} \indiq_{\{i \leq
     \left< \nu_\sm,1 \right>\}} Q(ds,di,dz,d\theta).
     \label{nun}
   \end{align}
A similar decomposition holds for $ <\nu_t,n_2> $.\\
The proof of ({\it i}) and  ({\it ii}) is standard and can easily be adapted from \cite{FM04}:
 we
 introduce for each integer $k$ the stopping time $\tau_k = \inf \left\{ t
   \geq 0 , \; \left<\nu_t,1 \right> \geq k \right\}$ and show that the sequence$(\tau_{k})_{k}$ tends a.s. to infinity, using that 
      \begin{align*}
  & \sup_{s\in[0,t\land \tau_k]}  \left< \nu_s,1\right>\leq \left< \nu_0,1\right> + \int_{(0,t\land \tau_k] \times
     \mathbb{N}^* \times {\cal X}\times \rit^+}  \indiq_{\{i \leq \left< \nu_\sm,1
     \right>\}}
  \ \indiq_{\left\{\theta \leq
      \theta_2^i(s)\right\}}
    Q(ds,di,dz,d\theta),
 \end{align*}
   and the estimates of moments up to time $\tau_{k}$ deduced from the latter and Assumption $(H1)$ and   Gronwall's lemma.  
  
\noindent Further, one may build the
 solution $(\nu_t)_{t\geq 0}$ step by step. One only has to check
 that the sequence of jump instants $(T_k)$ goes a.s.\ to infinity as
 $k$ tends to infinity, which follows from the previous
 result.\\

\noindent  The proof of ({\it iii}) follows a similar argument  with   $\tau_k^1 := \inf \left\{ t
   \geq 0 , \; \left<\nu_t,n_1^2 \right> \geq k \right\}$.  From
 \begin{align*}
  & \sup_{s\in[0,t\land \tau_k^1]}  \left< \nu_s,n_1^2\right>\leq \left<
     \nu_0,n_1^2\right> + \int_{(0,t\land \tau_k^1] \times
     \mathbb{N}^* \times {\cal X}\times \rit^+}  \indiq_{\{i \leq \left< \nu_\sm,1
     \right>\}}
     \\
  &\hskip 2cm \bigg\{( N_1^i(\nu_\sm))^2\,\indiq_{\left\{\theta \leq
      \theta_2^i(s)\right\}} + (2\,  N_1^i(\nu_\sm) + 1)\,\indiq_{\left\{\theta^i_3(s)\theta \leq
      \theta^i_4(s)\right\}}\bigg\}
    Q(ds,di,dz,d\theta),
 \end{align*}
and  $\mathbb{E} \left(
     \left<\nu_0,n^2 \right> \right) <+\infty$ and ({\it ii}) since $2 n_{1} + 1 \leq n_{1}^2 +2$, we firstly get,  using Assumption $(H1)$ and  Gronwall's lemma, that
 $$ \mathbb{E} ( \sup_{t \in [0,T\land \tau_k^1]} \left< \nu_t,n_{1}^2\right>)\leq C_{T}.$$
 Then we deduce that $\tau_k^1$ tends to infinity a.s. and that
     $\mathbb{E} (\sup_{t\in[0,T]} \left< \nu_t,n_1^2\right> ) <+\infty$.
     The same is true replacing $n_1$ by $n_2$.
\end{proof}

\subsection{Martingale Properties}
\label{secexist}

We finally give some martingale properties of the process
$(\nu_t)_{t\geq 0}$, which  are the key point of our approach. For measurable functions
$f, g_1, g_2$, let us denote by $F_{fg}$ the function defined on
$M_F$ by 
$$
F_{fg}(\nu):= <\nu,fg_1g_2>
.
$$
\begin{thm}
 \label{martingales}
 Assume $(H1)$ together with  $\mathbb{E}  \left( \left<
     \nu_0,1 \right>^3 \right) <+\infty$ and $\mathbb{E} \left( \left<
     \nu_0,n^2\right> \right) <+\infty$.\\
\noindent (i)  For all measurable  functions $\ \phi, f, g_1, g_2\ $ such
that\\
$\ | \phi_{fg_1g_2} (\nu) | +|L \phi_{fg_1g_2}(\nu)|\leq
C(1+ \langle \nu,1\rangle^3 +\langle \nu,n^2\rangle)$, the process
   \begin{equation}
     \label{pbm1}
   \phi_{fg_1g_2} (\nu_t) - \phi_{fg_1g_2} (\nu_0) - \intot   L \phi_{fg_1g_2}(\nu_s)
     ds
   \end{equation}
   is a c\`adl\`ag  $({\cal F}_t)_{t\geq 0}$-martingale starting from $0$,
 where $L \phi_{fg_1g_2}$ has been defined in
 (\ref{generator}).\\

\noindent  (ii) For all measurable bounded functions $f, g_1, g_2$, the process
   \begin{equation}
     \label{pbm2}
    M_t^{fg}= \left<\nu_t,fg_1g_2\right> - \left<\nu_0,fg_1g_2\right> - \intot  LF_{fg}(\nu_s)
     ds
   \end{equation}
   is a c\`adl\`ag square integrable $({\cal F}_t)_{t\geq 0}$-martingale starting from $0$,
 where
   \begin{align}
    LF_{fg}(\nu) &=
     \int_{{\cal X} \times \nit^2}
     \bigg\{\bigg(B(x,n_1,n_2)(1-p(x,n_1,n_2))
     -(D(x,n_1,n_2)+\alpha(x,n_1,n_2)U*\nu (x))\bigg)\notag \\
     &\hskip 5cm f(x)
     g_1(n_1)g_2(n_2)
      \notag \\
     &+p(x,n_1,n_2)B(x,n_1,n_2)\int_{}f(x+z)g_1(n_1)g_2(n_2)M(x,n_1,n_2,z)dz  \notag \\
     &+ f(x)\big(g_1(n_1+1)-g_1(n_1)\big)g_2(n_2)b_1(x)n_1+
      f(x)g_1(n_1)\big(g_2(n_2+1)-g_2(n_2)\big)b_2(x)n_2\notag \\
      &+ f(x)\big(g_1(n_1-1)-g_1(n_1)\big)g_2(n_2)\big(d_1(x)+\beta_1(x)(\lambda_{11}n_1 + \lambda_{12}n_2)\big)n_1\notag \\
     & +
     f(x)g_1(n_1)\big(g_2(n_1-1)-g_2(n_2)\big)\big(d_2(x)+\beta_2(x)(\lambda_{21}n_1 +\lambda_{22}n_2)\big)n_2\bigg\}\nu(dx,dn_1,dn_2).
     \label{gen}
   \end{align}

\noindent   Its
    quadratic variation is given by 
   \begin{align}
     \langle M^{fg}\rangle_t &= \intot \int_{{\cal  X}\times \nit^2}
     \bigg\{\bigg((1-p(x,n_1,n_2))B(x,n_1,n_2)+(D(x,n_1,n_2)+\alpha(x,n_1,n_2)U*\nu_s(x))\bigg)\notag \\
     & \hskip 5cm
   f^2(x)g_1^2(n_1)g_2^2(n_2)
     \notag \\
     &+p(x,n_1,n_2)B(x,n_1,n_2)\int_{}f^2(x+z)g_1^2(n_1)g_2^2(n_2)M(x,n_1,n_2,z)dz
     \notag \\
    & + f^2(x)\big(g_1(n_1+1)-g_1(n_1)\big)^2g_2^2(n_2)b_1(x)n_1\notag \\
     &+
      f^2(x)g_1^2(n_1)\big(g_2(n_2+1)-g_2(n_2)\big)^2b_2(x)n_2\notag \\
      &+ f^2(x)\big(g_1(n_1-1)-g_1(n_1)\big)^2g_2^2(n_2)\big(d_1(x)+\beta_1(x)(\lambda_{11}n_1 +\lambda_{12}n_2)\big)n_1\notag \\
     & + f^2(x)g_1^2(n_1)\big(g_2(n_1-1)-g_2(n_2)\big)^2\big(d_2(x)+\beta_2(x)(\lambda_{21}n_1 + \lambda_{22}n_2)\big)n_2
     \bigg\} \nu_s(dx,dn_1,dn_2) ds. \label{qv}
   \end{align}
\end{thm}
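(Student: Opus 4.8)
\noindent The plan is to read both statements off the pathwise representation~(\ref{bpe}) by compensating the driving Poisson measure $Q$ against its deterministic intensity $\hat Q(ds,di,dz,d\theta)=ds\,\big(\sum_{k\geq 1}\delta_k(di)\big)\,\bar M(z)\,dz\,d\theta$. For~(i) I would begin from the telescoping identity~(\ref{ito}) and substitute the jumps prescribed by~(\ref{bpe}): each atom of $Q$ triggers exactly one of the seven elementary increments of $\langle\nu,fg_1g_2\rangle$, so that $\phi_{fg_1g_2}(\nu_t)-\phi_{fg_1g_2}(\nu_0)$ becomes an integral of the corresponding increments of $\phi$ against $Q$. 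Writing $Q=\tilde Q+\hat Q$ with $\tilde Q$ the compensated measure, I would check that integrating these increments against $\hat Q$ reproduces precisely $\intot L\phi_{fg_1g_2}(\nu_s)\,ds$ as written in~(\ref{generator}); this is a direct bookkeeping using the thresholds~(\ref{theta}), the factor $\bar M(z)$ in the intensity cancelling the $\bar M(z)$ in the denominator of the mutation rate. The remaining integral against $\tilde Q$ is the martingale candidate, and it is c\`adl\`ag by construction.

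\me The only genuine work in~(i) is to promote this compensated integral from a local to a true martingale. Here I would localize along the stopping times $\tau_k$ introduced in the proof of Theorem~\ref{existence}, use the growth hypothesis $|\phi_{fg_1g_2}(\nu)|+|L\phi_{fg_1g_2}(\nu)|\leq C(1+\langle\nu,1\rangle^3+\langle\nu,n^2\rangle)$ to dominate the integrand by the intensity, and then let $k\to\infty$ by dominated convergence, the domination being provided by the uniform moment bounds~(\ref{lp}) (with $p=3$) and~(\ref{n2}). This yields~(i).

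\me Part~(ii) is the specialization $\phi=\mathrm{id}$, for which $\phi_{fg_1g_2}=F_{fg}$ and $L\phi_{fg_1g_2}=LF_{fg}$, so it suffices to verify the growth condition for bounded $f,g_1,g_2$. The delicate term is the competition death rate $\alpha(x,n_1,n_2)\,U*\nu(x)$, bounded under~(H1) by a constant times $n\,\langle\nu,1\rangle$, whose integral against $\nu$ equals $\langle\nu,n\rangle\langle\nu,1\rangle\leq\tfrac12(\langle\nu,n^2\rangle+\langle\nu,1\rangle^3)$, exactly the estimate already used in the proof of Proposition~\ref{gi}; the quadratic cell death rates contribute $\langle\nu,n^2\rangle$ directly. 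Hence $|LF_{fg}(\nu)|\leq C(1+\langle\nu,1\rangle^3+\langle\nu,n^2\rangle)$ and~(i) applies, giving the martingale~(\ref{pbm2}).

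\me For the quadratic variation~(\ref{qv}) I would compute the predictable bracket of the compensated jump integral $M^{fg}=\int\Delta\,d\tilde Q$, where $\Delta$ denotes the jump of $\langle\nu,fg_1g_2\rangle$ at a given atom; the general theory of compensated Poisson integrals gives $\langle M^{fg}\rangle_t=\int\Delta^2\,d\hat Q$, and squaring each of the seven elementary increments and weighting it by its rate reproduces~(\ref{qv}). Equivalently I would apply~(i) with $\phi(u)=u^2$ and identify $\langle M^{fg}\rangle$ from the uniqueness of the Doob--Meyer decomposition. Square integrability, $\mathbb{E}(\langle M^{fg}\rangle_t)<\infty$, again follows from~(\ref{lp}) and~(\ref{n2}), since every term of the integrand in~(\ref{qv}) is, after integration against $\nu_s$, controlled by $\langle\nu_s,1\rangle^3+\langle\nu_s,n^2\rangle$. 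The main obstacle throughout is precisely this passage from local to genuine square integrable martingale: everything hinges on having the third moment~(\ref{lp}) and the second cell moment~(\ref{n2}) at one's disposal, which is why Theorem~\ref{existence} enters as a standing hypothesis.
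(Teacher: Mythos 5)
Your proposal is correct and follows essentially the same route as the paper: the drift identification via the compensated Poisson representation and the moment estimates of Theorem~\ref{existence} (this is exactly what Proposition~\ref{gi} supplies), localization to pass from local to true martingale, and the bracket obtained by comparing the two semimartingale decompositions of $\langle\nu_t,fg_1g_2\rangle^2$ --- your ``equivalently, apply~(i) with $\phi(u)=u^2$ and invoke Doob--Meyer uniqueness'' is precisely the paper's argument. Your domination of the competition term by $\tfrac12(\langle\nu,n^2\rangle+\langle\nu,1\rangle^3)$ is also the estimate the paper uses.
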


\begin{proof}
The martingale property is immediate  by
 Proposition~\ref{gi} and Theorem \ref{existence}. Let us justify the form of the quadratic variation process. 
Using  a localization argument as in Theorem \ref{existence}, we may
 compare two different expressions of $ \langle \nu_t, fg_1g_2\rangle^2$. The
 first one is obtained by  applying~(\ref{pbm1}) with $\phi(\nu) := \left< \nu,
 fg_1g_2\right>^2$. The second one is obtained
 by applying It\^o's formula to
 compute $\left< \nu_t, fg_1g_2\right>^2$ from~(\ref{pbm2}).  
Comparing these expressions leads  to~(\ref{qv}). 
We may let go the localization stopping time sequence to infinity since  $E \big(  \left< \nu_0,1 \right>^3 \big)<+\infty$ and   
$E\left( \left< \nu_0,n^2\right> \right) <+\infty$. Indeed,
 in this case, $E(\langle M^{fg} \rangle_t)<+\infty$ thanks
 to Theorem \ref{existence} and to the proof of Proposition \ref{gi}.
\end{proof}

\section{Deterministic large population approximations}
\label{sec:large-popu}


\noindent We are interested in studying  large population
approximations of our individual-based system.
We rescale the size of individual population by $K$ and the size of the cell populations by $ K_{1}$ respectively $ K_{2}$. 
 With $\kappa =( K, K_{1}, K_{2})$, the process of interest is now
the Markov process $(Y^\kappa_t)_{t\geq 0}$ defined as
\begin{equation*}
 Y^\kappa_t= \frac{1}{K} \sum_{i=1}^{I_\kappa(t)} \delta_{(X^i_\kappa(t),\frac{N_{1,\kappa}^i(t)}{K_1}, \frac{N_{2,\kappa}^i(t)}{K_2})} 
\in M_F({\cal X}\times\rit_+ \times \rit_+)
\end{equation*}
in which cells of type $1$ (resp. of type $2$) have been weighted by ${1\over K_{1}}$  (resp. by ${1\over K_{2}}$) and individuals by ${1\over K}$. 
The dynamics of the process $(X^i_\kappa(t), N_{1,\kappa}^i(t), N_{2,\kappa}^i(t))$ is the one described in Section $2$ except some coefficients are depending on the scaling $\kappa$ as described below.\\
The individual dynamics depends on $B_\kappa$, $p_\kappa$, $M_\kappa$, $D_\kappa$, $\alpha_\kappa$, $U_\kappa$ which are assumed to satisfy the assumptions $(H_{1})$ of the section 2 for any fixed $\kappa$.

\medskip \noindent {\bf Notation}: We say that $\kappa \to \infty$ when the three parameters $K, K_1, K_2$ tend to infinity.

\me \noindent {\bf Assumptions (H2):}  \\
{\it 1) There exist continuous functions $B$,  $D$ and $\alpha$ on ${\cal X} \times \rit_+\times \rit_+$ such that
\begin{align}
\label{scaling}
&\lim_{\kappa \to \infty}  \sup_{x,y_{1}, y_{2}}|B_\kappa(x, K_1 y_1,K_2 y_2)-B(x,y_1,y_2)|+ 
|D_\kappa(x,  K_1 y_1,K_2 y_2)-D(x,y_1,y_2)|=0,\nonumber\\
&\lim_{\kappa \to \infty}  \sup_{x,y_{1}, y_{2}}|\alpha_\kappa(x, K_1 y_1,K_2 y_2)-\alpha(x,y_1,y_2) |=0.
\end{align}

\noindent
We assume that the functions $B$, $D$ and $\alpha$ satisfy Assumption $(H1)$. 

\medskip \noindent 2) The competition kernel $U_{\kappa}$ satisfies
\be
\label{resource}
U_\kappa(x)={U(x)\over K},\ee
where $U$ is a continuous function.  

\me \noindent 3) The others parameters $p_{\kappa}=p$ and $M_{\kappa}=M$ stay unchanged, as also the cell ecological parameters:
$b_{1,\kappa}=b_{1}$,
$b_{2,\kappa}=b_{2}$, $d_{1,\kappa}=d_{1}$,$d_{2,\kappa}=d_{2}$, $\beta_{1,\kappa}=\beta_{1}$, $\beta_{2,\kappa}=\beta_{2}$. 
The functions $p$ and $M$ are assumed to be continuous and the  functions $b_{i}, d_{i}$ and $\beta_{i}$ are of class $C^1$. 
\\
We assume
$$r_{i}= b_{i} - d_{i}>0\ , \quad i \in \{1,2 \}.$$

\noindent 4) Similarly to \eqref{resource}, the interaction rates between cells satisfy
\be
\label{inter-cell}
\lambda_{ij}^\kappa=\frac{\lambda_{ij} }{K_j}, \qquad i,j \in \{1,2\} .
\ee
}

Remark that Assumption (H2) 1) means that at a large scale $K$,  the individuals are influenced in their ecological behavior by the cells if the number of the latter is of order $K_{1}$  for cells of type $1$, resp. of order $K_{2}$  for cells of type $2$. On the other side the hypothesis (H2) 2) may be a consequence of a fixed amount
of available resources to be partitioned among all the individuals. Larger
systems are made up of smaller interacting individuals whose
biomass is scaled by $1/K$, which  implies that the interaction
effect of the global population on a focal individual is of order
$1$. 

\bi \noindent {\bf Examples} 

\me (i) If $K_{1} = K_{2} $ and if the individual rates $B_\kappa, D_\kappa, \alpha_\kappa$ only depend on $x, n_{1}, n_{2}$ by the proportion of cells of type $1$, then \eqref{scaling} is satisfied.

\me (ii) Assume that  $K_{1} = K_{2} = K$ and that the functions $B_\kappa, D_\kappa, \alpha_\kappa$  only depend on the weighted total number of cells  ${1\over K} (n_{1}+n_{2})$.

\subsection{A convergence theorem}

\noindent We assume that  the sequence of random initial conditions
$Y_0^\kappa$ converges in law  to some finite measure
$v_0 \in M_F({\cal X}\times\rit_+ \times \rit_+)$  when  $\kappa \to \infty$.  Our aim is  to study the limiting behavior of the processes $Y^\kappa_\cdot$ as $\kappa \to \infty$.

\noindent The generator $L^\kappa$ of
$(Y^\kappa_t)_{t\geq 0}$ is easily obtained by computing, for any measurable
function $\phi$ from $M_F({\cal X}\times\rit_+ \times \rit_+)$ into $\rit$ and any $\mu
\in M_F({\cal X}\times\rit_+ \times \rit_+)$,
\begin{equation*}
 L^\kappa\phi(\mu)=\partial_t \mathbb{E}_{\mu}(\phi(Y^\kappa_t))_{t=0}.
\end{equation*}

\noindent In particular, similarly  as in Theorem~\ref{martingales}, we may
summarize the moment and martingale properties of $Y^\kappa$.

\begin{prop}
\label{YK}
Assume that for some $p\geq 3$,  $\mathbb{E}(\langle Y^\kappa_0,1\rangle^p + \langle Y^\kappa_0,y_1^2+y_2^2\rangle)<+\infty$. Then
\begin{description}
\item[\textmd{(1)}] For any $T>0$, $\mathbb{E}\left(\sup_{t\in[0,T]}\langle Y^\kappa_t,1\rangle^p +\sup_{t\in[0,T]}\langle Y^\kappa_t,y_1^2+y_2^2\rangle\right)<+\infty$.
\item[\textmd{(2)}]
For any measurable bounded functions $f, g_1, g_2$, the process
\begin{align}  \label{pbmYK}
&\tilde M_t^{\kappa,fg}  = \left<Y^\kappa_t,fg_1g_2\right> - \left<Y^\kappa_0,fg_1g_2\right>  
- \int_0^t \int_{{\cal  X}\times \rit^2_+}
   \bigg\{\Big( B_\kappa(x,K_1 y_1,K_2 y_2)(1-p(x)) \notag \\
&  \qquad -\big(D_\kappa(x,K_1 y_1,K_2 y_2)+ \alpha_\kappa(x,K_1 y_1,K_2 y_2)\ U*Y^\kappa_s(x,y_1,y_2)\big)\Big)
f(x)g_1(y_1)g_2(y_2)  \notag \\
& \qquad + p(x)B_\kappa(x,K_1 y_1,K_2 y_2)\int_{}f(x+z)g_1(y_1)g_2(y_2)M(x,z)dz  \notag \\
& \qquad + f(x)\big(g_1(y_1+\frac{1}{K_1})-g_1(y_1)\big)g_2(y_2)\ b_{1}(x)\  K_1 y_1 \notag \\
&\qquad + f(x)g_1(y_1)\big(g_2(y_2+\frac{1}{K_2})-g_2(y_2)\big)\ b_{2}(x)\ K_2 y_2\notag \\
& \qquad + f(x)\big(g_1(y_1-\frac{1}{K_1})-g_1(y_1)\big)g_2(y_2)
 \Big(d_{1}(x)+\beta_{1}(x)(\lambda_{11}y_1+\lambda_{12}y_2)\Big) K_1 y_1\notag \\
& \qquad + f(x)g_1(y_1)\big(g_2(y_1-\frac{1}{K_2})-g_2(y_2)\big)
  \Big(d_{2}(x)+\beta_{2}(x)(\lambda_{21}y_1+\lambda_{22}y_2)\Big) K_2 y_2\bigg\} \notag \\
& \hskip 8.3cm Y_s^\kappa(dx,dy_1,dy_2)\ ds 
\end{align}
is a  c\`adl\`ag square integrable martingale starting from $0$
with quadratic variation
\begin{align}
& \langle \tilde M^{\kappa,fg}\rangle_t = 
\frac{1}{K}\intot \int_{{\cal  X}\times \rit^2_+}
\bigg\{\Big( B_\kappa(x,K_1 y_1,K_2 y_2)(1-p(x)) \notag \\
& \qquad  \qquad + \big(D_\kappa(x,K_1 y_1,K_2 y_2)+ \alpha_\kappa(x,K_1 y_1,K_2 y_2)\ U*Y^\kappa_s(x,y_1,y_2)\big)\Big)
f^2(x)g_1^2(y_1)g_2^2(y_2) \notag \\
& + p(x)B_\kappa(x,K_1 y_1,K_2 y_2)\int_{}f^2(x+z)g_1^2(y_1)g_2^2(y_2)M(x,z)dz  \notag \\
& + f^2(x)\ \big( g_1(y_1+\frac{1}{K_1})-g_1(y_1) \big)^2\ g_2^2(y_2)\ b_{1}(x)\ K_1 y_1 \notag \\
& +     f^2(x)\ g_1^2(y_1)\ \big (g_2(y_2+\frac{1}{K_2})-g_2(y_2) \big)^2\ b_{2}(x)\ K_2 y_2 \notag \\
& + f^2(x)\ \big( g_1(y_1-\frac{1}{K_1})-g_1(y_1) \big)^2\ g_2^2(y_2)\
\Big(d_{1}(x)+\beta_{1}(x)(\lambda_{11}y_1 +\lambda_{12}y_2 )\Big) K_1 y_1 \notag \\
& + f^2(x)\ g_1^2(y_1)\ \big( g_2(y_2-\frac{1}{K_2})-g_2(y_2) \big)^2\
 \Big(d_{2}(x)+\beta_{2}(x)(\lambda_{21}y_1+ \lambda_{22}y_2 )\Big) K_2 y_2
 \bigg\} \notag \\
& \hskip 9.3cm Y^\kappa_s(dx,dy_1,dy_2)\ ds.  \label{qv2}
\end{align}
\end{description}
\end{prop}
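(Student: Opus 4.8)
The plan is to reduce the entire statement to Theorems~\ref{existence} and~\ref{martingales}, applied to the \emph{unscaled} point process built from the $\kappa$-dependent parameters, since $Y^\kappa$ is merely a deterministic rescaling of such a process. Write $\nu^\kappa_t=\sum_{i=1}^{I_\kappa(t)}\delta_{(X^i_\kappa(t),N^i_{1,\kappa}(t),N^i_{2,\kappa}(t))}$ for the underlying process, which obeys Assumption $(H1)$ for each fixed $\kappa$ by $(H2)$. The rescaling sends $\nu^\kappa$ to $Y^\kappa=\frac1K\sum_i\delta_{(x^i,n_1^i/K_1,n_2^i/K_2)}$, so that for bounded measurable $f,g_1,g_2$ the pushforward identity
$$\langle Y^\kappa_t,fg_1g_2\rangle=\tfrac1K\langle\nu^\kappa_t,f\,\tilde g_1\tilde g_2\rangle,\qquad \tilde g_j(n_j):=g_j\!\left(n_j/K_j\right),$$
holds, and more generally $\frac1K\int h(x,n_1/K_1,n_2/K_2)\,\nu^\kappa(dx,dn_1,dn_2)=\int h(x,y_1,y_2)\,Y^\kappa(dx,dy_1,dy_2)$. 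First I would translate the moment hypotheses: for fixed $\kappa$ one has $\mathbb{E}(\langle\nu^\kappa_0,1\rangle^p)=K^p\,\mathbb{E}(\langle Y^\kappa_0,1\rangle^p)$, while $n^2\le 2(n_1^2+n_2^2)$ and $n_j=K_jy_j$ give $\mathbb{E}(\langle\nu^\kappa_0,n^2\rangle)\le 2\max(K_1^2,K_2^2)\,K\,\mathbb{E}(\langle Y^\kappa_0,y_1^2+y_2^2\rangle)$. With $p\ge3$ this furnishes exactly the hypotheses $\mathbb{E}(\langle\nu^\kappa_0,1\rangle^3)<\infty$ and $\mathbb{E}(\langle\nu^\kappa_0,n^2\rangle)<\infty$ required below.

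For part~(1), Theorem~\ref{existence}(ii)--(iii) bounds $\mathbb{E}(\sup_{t\le T}\langle\nu^\kappa_t,1\rangle^p)$ and $\mathbb{E}(\sup_{t\le T}\langle\nu^\kappa_t,n^2\rangle)$. Dividing by the relevant powers of $K,K_1,K_2$ and using $\langle Y^\kappa_t,1\rangle^p=K^{-p}\langle\nu^\kappa_t,1\rangle^p$ together with $\langle Y^\kappa_t,y_1^2+y_2^2\rangle\le \frac1K\langle\nu^\kappa_t,n^2\rangle$ (valid for $K_1,K_2\ge1$) yields the two claimed finite expectations.

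For part~(2), I would apply Theorem~\ref{martingales}(ii) to $\nu^\kappa$ with the bounded test functions $f,\tilde g_1,\tilde g_2$ and the $\kappa$-parameters, obtaining a square integrable martingale $M^{f\tilde g}$. As $1/K$ is deterministic, $\tilde M^{\kappa,fg}:=\frac1KM^{f\tilde g}$ is again a square integrable martingale, and it remains to recast $\frac1K\,LF_{f\tilde g}(\nu^\kappa_s)$ in the stated shape. The change of variables $n_j=K_jy_j$ turns $\frac1K\int(\cdots)\nu^\kappa_s$ into $\int(\cdots)Y^\kappa_s$; the discrete increments become $\tilde g_j(n_j\pm1)-\tilde g_j(n_j)=g_j(y_j\pm\frac1{K_j})-g_j(y_j)$; the cell rates $b_j(x)n_j$ and $d_j(x)n_j$ become $b_j(x)K_jy_j$ and $d_j(x)K_jy_j$; and, crucially, the scaled interaction rates $\lambda_{ij}^\kappa=\lambda_{ij}/K_j$ recombine with $n_j=K_jy_j$ into the clean $\lambda_{ij}y_j$. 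One also checks, using $(H2)$~2), that $U_\kappa*\nu^\kappa_s(x)=\frac1K\sum_jU(x-x^j)=U*Y^\kappa_s(x)$, which is exactly why the competition term remains of order one. This reproduces~(\ref{pbmYK}). For the quadratic variation~(\ref{qv2}) one uses $\langle\tilde M^{\kappa,fg}\rangle_t=\frac1{K^2}\langle M^{f\tilde g}\rangle_t$; inserting $f,\tilde g_1,\tilde g_2$ into~(\ref{qv}) and repeating the change of variables, one factor $1/K$ is absorbed by the pushforward $\frac1K\nu^\kappa_s\mapsto Y^\kappa_s$ while the remaining $1/K$ becomes the prefactor in~(\ref{qv2}), the squared increments turning into $(g_j(y_j\pm\frac1{K_j})-g_j(y_j))^2$ as stated.

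The calculations are routine; the only point deserving genuine care is the \emph{bookkeeping of powers of $K$}, and in particular the appearance of the prefactor $1/K$ rather than $1$ in~(\ref{qv2}). This is the crux for the forthcoming large-population limit: a jump of $\langle Y^\kappa,fg_1g_2\rangle$ caused by a single individual event has size $O(1/K)$, so its square contributes $O(1/K^2)$ per event against $O(K)$ events per unit time, leaving the $O(1/K)$ that will make $\tilde M^{\kappa,fg}$ asymptotically negligible. Finally, the legitimacy of letting the localizing stopping times tend to infinity so that $\mathbb{E}(\langle M^{f\tilde g}\rangle_t)<\infty$ reduces, exactly as in the proof of Theorem~\ref{martingales}, to the moment bounds of Theorem~\ref{existence} via the hypothesis translation above.
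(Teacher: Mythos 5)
Your proof is correct and takes essentially the same approach the paper intends: Proposition~\ref{YK} is given there without an explicit argument, being presented as following ``similarly'' from Theorems~\ref{existence} and~\ref{martingales} applied to the underlying unscaled point process with the $\kappa$-dependent parameters, which is exactly the reduction you carry out. Your explicit bookkeeping of the change of variables $n_j=K_jy_j$, of the translated moment hypotheses, and of the powers of $K$ (in particular the $1/K$ prefactor in the quadratic variation) correctly fills in the details the paper leaves implicit.
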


\noindent We can now state our convergence result.

\begin{thm}
\label{largepoplargecells}
Assume (H2). 
Assume moreover that the sequence of initial
conditions $Y^\kappa_0 \in M_F({\cal  X}\times \rit_+^2)$ satisfies
$ \sup_\kappa  \mathbb{E}(\langle Y^\kappa_0,1\rangle^3)<+\infty$ and
$\sup_\kappa \mathbb{E}(\langle Y^\kappa_0,y_1^2+y_2^2 \rangle)<+\infty$.
If  $Y^\kappa_0$ converges in law,  
as $\kappa$ tends to infinity, to a finite deterministic measure
$v_0$, then the sequence of processes $(Y^\kappa_t)_{0 \leq t\leq T}$ converges in
law in the Skorohod space $\dit([0,T],M_F({\cal  X}\times \rit_+^2))$, as $\kappa$ goes
to infinity, to the unique (deterministic) measure-valued flow  $v \in
C([0,T],M_F({\cal  X}\times \rit_+^2))$ satisfying  for any bounded and continuous function $f$ and any bounded functions $ g_1,g_2$ of class $C^1_{b}$, \begin{align}
\label{eq:limit}
& \langle v_t,fg_1g_2\rangle  =\langle v_0,fg_1g_2\rangle  +\int_0^t\int_{{\cal  X}\times \rit_+^2} \bigg\{\Big(B(x,y_1,y_2)(1-p(x)) \notag \\
& \hskip 3.8cm -\big(D(x,y_1,y_2)+\alpha(x,y_1,y_2)\ U*v_s(x,y_1,y_2)\big)\Big)
  f(x) g_1(y_1)g_2(y_2) \notag \\
& \qquad + p(x)B(x,y_1,y_2)\int_{}f(x+z)M(x,z)dz \ g_1(y_1)g_2(y_2) \notag \\
& \qquad + f(x) \Big[g'_1(y_1) g_2(y_2) b_{1}(x)\ y_1
 +  g_1(y_1)  g'_2(y_2) b_{2}(x)\ y_2  \notag \\
& \qquad -  g'_1(y_1)g_2(y_2)\Big(d_{1}(x)+\beta_{1}(x)(\lambda_{11}y_1+\lambda_{12}y_2)\Big)y_1\notag \\
& \qquad -g_1(y_1) g'_2(y_2)\Big(d_{2}(x)+\beta_{2}(x)(\lambda_{21}y_1+\lambda_{22}y_2)\Big)y_2\Big]\bigg\}
     v_s(dx,dy_1,dy_2)\  ds.
\end{align}
\end{thm}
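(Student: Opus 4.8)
The plan is to follow the classical three-step scheme for deriving a deterministic measure-valued limit from a sequence of semimartingales: establish uniform-in-$\kappa$ moment bounds, prove tightness of the laws of $(Y^\kappa)$ in $\dit([0,T],M_F)$, identify every limit point as a solution of \eqref{eq:limit}, and finally show that \eqref{eq:limit} has a unique solution, so that the whole sequence converges. The moment bounds are essentially at hand: the hypotheses $\sup_\kappa\mathbb{E}(\langle Y^\kappa_0,1\rangle^3)<\infty$ and $\sup_\kappa\mathbb{E}(\langle Y^\kappa_0,y_1^2+y_2^2\rangle)<\infty$, together with the fact that the scaled rates of (H2) satisfy (H1) with constants independent of $\kappa$, make the estimates of Proposition \ref{YK} uniform in $\kappa$; in particular $\sup_\kappa\mathbb{E}(\sup_{t\leq T}\langle Y^\kappa_t,1\rangle^3+\sup_{t\leq T}\langle Y^\kappa_t,y_1^2+y_2^2\rangle)<\infty$, an estimate used throughout.

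For tightness I would apply Jakubowski's criterion for the weak topology on $M_F(\mathcal{X}\times\rit_+^2)$. The compact containment condition follows from the moment bounds: since $\mathcal{X}$ is compact and $\sup_\kappa\mathbb{E}\,\langle Y^\kappa_t,y_1^2+y_2^2\rangle$ is finite, no mass escapes to infinity in the cell variables, so the $\{Y^\kappa_t\}$ sit, with high probability uniformly in $\kappa$, on a weakly compact subset of $M_F$. The second ingredient is tightness in $\dit([0,T],\rit)$ of the real processes $\langle Y^\kappa_\cdot,fg_1g_2\rangle$ for $f$ continuous bounded and $g_1,g_2\in C^1_b$, which I would obtain from the Aldous--Rebolledo criterion applied to the decomposition \eqref{pbmYK}. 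The finite-variation part has increments controlled in expectation by the moment bounds (its integrand grows at most quadratically in $(y_1,y_2)$), and the martingale part is negligible: by \eqref{qv2} its bracket carries a prefactor $1/K$, while the cell contributions, using $\big(g_i(y_i\pm\tfrac{1}{K_i})-g_i(y_i)\big)^2\leq \|g_i'\|_\infty^2/K_i^2$ against the rate of order $K_i y_i$, are of order $1/K_i$; hence $\mathbb{E}\,\langle\tilde M^{\kappa,fg}\rangle_T\to0$.

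For the identification I would pass to the limit in \eqref{pbmYK} along a converging subsequence, invoking the Skorohod representation theorem to argue with almost sure convergence in $\dit([0,T],M_F)$. The jumps of $\langle Y^\kappa_\cdot,fg_1g_2\rangle$ are $O(1/K)$ for host events and $O(\|g_i'\|_\infty/K_i)$ for cell events, so the limit $v$ is continuous in time and every time is a continuity point; the martingale term vanishes by the bracket estimate above. In the drift, the scaled rates converge by \eqref{scaling}, while the discrete cell increments linearize, $K_i\big(g_i(y_i\pm\tfrac{1}{K_i})-g_i(y_i)\big)\to\pm g_i'(y_i)$, and the cancellation $b_i-d_i=r_i$ together with \eqref{inter-cell} produces the Lotka--Volterra transport drift $y_i\big(r_i(x)-\beta_i(x)(\lambda_{i1}y_1+\lambda_{i2}y_2)\big)$ of \eqref{eq:limit}. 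The main obstacle lies precisely in the two families of coefficients that are \emph{unbounded} in the cell variables: the nonlinear host competition $\alpha(x,y_1,y_2)\,U*Y^\kappa_s(x)$, with $\alpha$ of linear growth in $y_1+y_2$ by (H1), and the cell-death term $g_i'(y_i)\beta_i(x)(\lambda_{i1}y_1+\lambda_{i2}y_2)y_i$, which is quadratic in $(y_1,y_2)$. Against these, mere weak convergence of $Y^\kappa_s$ does not suffice; I would control them by a truncation argument, replacing each coefficient $c(x,y)$ by $c(x,y)\wedge A$, passing to the limit for the bounded truncation (where $\nu\mapsto\langle\nu,c_A\rangle$ and $\nu\mapsto\langle\nu,\alpha_A\,U*\nu\rangle$ are weakly continuous since $U$ is continuous and bounded), and estimating the remainder uniformly in $\kappa$. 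Making that remainder small uniformly is the genuinely delicate point, as it needs uniform integrability of the $y^2$-mass; it can be secured by first upgrading the a priori estimates of Proposition \ref{YK} to a cell moment of order slightly above two, after which the truncation error is $O(A^{-\delta})$ uniformly in $\kappa$.

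It remains to prove that \eqref{eq:limit} has at most one solution in $C([0,T],M_F)$ within the same moment class, which upgrades the subsequential limit to convergence of the full sequence. I would argue by Gronwall, as in \cite{FM04}: given two solutions $v^1,v^2$ with $v^1_0=v^2_0=v_0$, I would estimate $\sup_{\varphi}|\langle v^1_t-v^2_t,\varphi\rangle|$ over a class of test functions $\varphi=fg_1g_2$ with $f,g_1,g_2$ and their relevant derivatives bounded, weighted so as to absorb the linear and quadratic growth of the coefficients. Subtracting the two versions of \eqref{eq:limit}, the transport and reaction terms are Lipschitz in the measure on the fixed moment class, the host competition because $U$ is continuous and bounded and the cell terms because $\beta_i\in C^1$ with $y$-weights controlled by the second moment, which yields $\|v^1_t-v^2_t\|\leq C\int_0^t\|v^1_s-v^2_s\|\,ds$ and hence $v^1=v^2$. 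Tightness, identification, and uniqueness together give the claimed convergence to the unique deterministic flow $v$.
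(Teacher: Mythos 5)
Your overall architecture (uniform moments, tightness, identification with a truncation to handle the coefficients that grow linearly or quadratically in $y$, then uniqueness) matches the paper's compactness--uniqueness proof, and your tightness and identification steps are essentially the paper's argument; your observation that uniform integrability of the $y^2$-mass is needed to control the truncation remainder is a fair (and slightly more careful) rendering of what the paper does with \eqref{asmom}. The genuine gap is in the uniqueness step. You propose to subtract the two versions of \eqref{eq:limit} and run Gronwall directly, over a class of weighted test functions meant to ``absorb the linear and quadratic growth of the coefficients.'' This does not close. The transport term contributes $\langle v^1_s-v^2_s,\, f\,g_1'g_2\,c_1+f\,g_1g_2'\,c_2\rangle$ with $c_i(x,y)$ quadratic in $y$ by \eqref{def-c}; if you measure the difference in a weighted norm $\sup\{|\langle\mu,h\rangle|:|h|\leq w\}$, then estimating this term forces you to test $v^1_s-v^2_s$ against $w\cdot(1+|y|^2)$, a strictly heavier weight, and the hierarchy of weights escalates at each Gronwall iteration instead of closing. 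The citation of \cite{FM04} does not help here, because in that setting there is no transport term with coefficients unbounded in an unbounded variable.

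The paper's resolution is precisely to remove the transport term before running Gronwall, by the method of characteristics. It first establishes global well-posedness and regularity of the Lotka--Volterra flow $\varphi^{s,y}_x(t)$ associated with $c$ (Lemmas \ref{ineglogistique} and \ref{lotka-volterra}, the comparison with a logistic equation guaranteeing no blow-up), shows that $\tilde G(s,t,x,y)=G(x,\varphi^{s,y}_x(t))$ solves the backward transport equation \eqref{eqntransport}, and then rewrites \eqref{eq:limit} against the time-dependent test function $g\circ\varphi^{s,y}_x(t)$ to obtain the mild equation \eqref{eq:mild}, in which the unbounded transport term has vanished and every remaining coefficient ($B$, $D$, $p$, $M$, $\alpha$, $U$, and the composed test function) is bounded. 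Only then does a plain Gronwall estimate in the finite-variation norm give $v=\bar v$. To repair your proof you need this (or an equivalent) reformulation along characteristics; as written, your uniqueness argument would fail at the transport term.
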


\me Note that for this dynamics, a transport term appears at the
level of cells.\\

\begin{rema} 
\rm 
\begin{itemize}
\item 
A solution of \eqref{eq:limit} is a measure-valued solution of the nonlinear integro-differential equation 
   \begin{equation} \label{edptransport}
   {\partial \over \partial t} v_{t} = \Big(B(1-p) -\big(D+\alpha\ U*v_t\big)\Big)v_{t}  +  (B\,p\,  v_{t})* M 
  - \triangledown_y \cdot \big( c v_{t}\big)
   \end{equation}
   with
\begin{eqnarray}
c_1(x,y)&:=&  y_1 \left(r_1(x) - \beta_1(x) \big( \lambda_{11} y_1 + \lambda_{12} y_2 \big)\right)\nonumber \\ 
c_2(x,y)&:=&  y_2\left(r_2(x)  - \beta_2(x) \big( \lambda_{21} y_1 + \lambda_{22} y_2 \big) \right).\label{def-c}  
\end{eqnarray} 
Thus, the existence of a weak solution for Equation \eqref{edptransport} is obtained as corollary of Theorem \ref{largepoplargecells}.

\me 
   \item We deduce from (\ref{eq:limit}) the limiting dynamics of the total number of individuals:\begin{align}
\label{totalmass} 
& \langle v_t,1\rangle  =\langle
v_0,1\rangle  +\int_0^t\int_{{\cal  X}\times \rit_+^2}
\Big( B(x,y_1,y_2) - D(x,y_1,y_2) \notag\\
& \hskip 3.5cm -\alpha(x,y_1,y_2)\ U*v_s(x,y_1,y_2)\Big)
v_s(dx,dy_1,dy_2)\  ds,
\end{align}
while the total number $\langle v_t,y_i\rangle$ of cells of type $i$ at time $t$  is obtained by taking $f\equiv 1, g_i(y)=y, g_j\equiv 1$ ($i\neq j)$ in (\ref{eq:limit})~:
\begin{align}
\label{cellmassgen}
& \langle v_t,y_i\rangle  =\langle
v_0,y_i\rangle \notag \\
& + \int_0^t
\int_{{\cal  X}\times \rit_+^2} \Big( B(x,y_1,y_2) - D(x,y_1,y_2)-\alpha(x,y_1,y_2)\ U*v_s(x,y_1,y_2)\Big) 
y_i \ v_s(dx,dy_1,dy_2) ds   \notag \\
&  + \int_0^t\int_{{\cal  X}\times 
\rit_+^2}\Big( (b_{i}(x)-d_i(x))y_i
-\beta_{i}(x)(\lambda_{ii}y_i+\lambda_{ij}y_j)y_i \Big)
     v_s(dx,dy_1,dy_2)\  ds.
\end{align}
\end{itemize}
\end{rema}

\bigskip \noindent 

\begin{proof}
The proof of the theorem is obtained by a standard compactness-uniqueness result (see e.g. \cite{Ethier-Kurtz}).
The compactness is a consequence, using Prokhorov's Theorem, of the uniform tightness of the sequence of laws of $(Y^\kappa_{t}, t\geq 0)$.  This uniform tightness derives  
from uniform moment estimates. Their  proof is standard and we refer for details to \cite{Ro86}, \cite{FM04} Theorem 5.3  or to \cite{ChF06}. 
To identify the limit, we first remark using \eqref{qv2} that the quadratic variation tends to $0$ when $K$ tends to infinity. Thus the limiting values are deterministic and it remains to prove the convergence of the drift term in \eqref{pbmYK} to  the one in \eqref{eq:limit}. 
The drift term in  \eqref{pbmYK}  has the form $\int_0^t \langle Y^{\kappa}_{s}, A^\kappa(Y^\kappa_{s})(fg_{1}g_{2}) \rangle ds $ and the limiting term in \eqref{eq:limit} has the form $\int_0^t \langle v_{s}, A(v_{s})(fg_{1}g_{2})\rangle ds$. (The exact values of $A^\kappa$ and $A$ are immediately given by   \eqref{pbmYK} and \eqref{eq:limit}). 

\noindent Thus, let us show that if  $Y^\kappa$ is a sequence of random measure-valued processes weakly converging to a measure-valued flow $Y$ and satisfying the moment assumptions 
\be
\label{asmom}
\sup_{\kappa} \mathbb{E}(\sup_{t\leq T}\langle Y^{\kappa}_{t},1\rangle^3) \, + \, \sup_{\kappa} \mathbb{E}(\sup_{t\leq T}\langle Y^{\kappa}_{t},y^2\rangle) <+ \infty,\ee
 then  $\langle Y^{\kappa}_{t}, A^\kappa(Y^\kappa_{t})(fg_{1}g_{2})\rangle$  converges in $L^1$ to  $\langle Y_{t}, A(Y_{t})(fg_{1}g_{2})\rangle$  uniformly in time $t\in [0, T]$. We write
\be
\label{conv-gen}
&&\langle Y^{\kappa}_{t}, A^\kappa(Y^\kappa_{t})(fg_{1}g_{2})\rangle - \langle Y_{t}, A(Y_{t})(fg_{1}g_{2})\rangle\notag\\
&&= \langle Y^{\kappa}_{t}, A^\kappa(Y^\kappa_{t})(fg_{1}g_{2}) - A(Y^\kappa_{t})(fg_{1}g_{2})\rangle +\langle Y^{\kappa}_{t}, A(Y^\kappa_{t})(fg_{1}g_{2})-A(Y_{t})(fg_{1}g_{2})\rangle\notag\\
&& \hskip 0.5cm+ \langle Y^{\kappa}_{t} - Y_{t}, A(Y_{t})(fg_{1}g_{2})\rangle.
\ee
The convergence of the first term to zero follows from Assumptions $(H2)$ and \eqref{asmom} and from the following remark, that for $C^1_{b}$-functions $g_{1}$ and $g_{2}$, the terms 
$$K_{i} \Big( g_{i}(y_{i}-{1\over K_{i}} ) - g_{i}(y_{i})\Big) + g'_{i}(y_{i})$$
converge to $0$ in a bounded pointwise sense, which allows us  to apply the Lebesgue's theorem. 

\noindent The convergence of the second term to $0$ is immediately obtained by use of \eqref{asmom}, since the functions $\alpha$ and  $U$ are continuous and bounded. 

\noindent The convergence of the third term of  \eqref{conv-gen} is due to the weak convergence of $Y^\kappa$ to $Y$. We know that for all bounded and continuous functions $\phi$, the quantity $\langle Y^\kappa_{t} - Y_{t}, \phi\rangle$  tends
 to $0$. The function $A(Y_{t})(fg_{1}g_{2})$ is a continuous function which is not bounded because of  linear terms in $y$ and  $y^2$. Thus we need  to cutoff at a level $M$ replacing $y$ by $y\wedge M$.  The remaining terms  are proved to go to $0$ using  \eqref{asmom}. Hence  we have proved that each limiting value satisfies \eqref{eq:limit}.

\me \noindent We have now to prove the uniqueness of the solutions $v \in C([0,T],M_F({\cal  X}\times \mathbb{R}_+^2))$ of \eqref{eq:limit}. Our argument is based on properties of  Lotka-Volterra's flows. 
Firstly we need the following comparison lemma.

\begin{lem} \label{ineglogistique}
If $u_t$ is a non negative function with positive initial value and satisfying for some $a,b\in \mathbb{R}_+^*$ the inequality 
\begin{eqnarray*} 
\forall t>0, \quad \frac{\partial }{\partial t} u_t &\leq & a u_t  - b u_t^2, \\
\textrm{ then } \qquad  0 & \leq & \sup_{t \geq 0} u_t =: \overline u   < + \infty .
\end{eqnarray*} 

Moreover 0 is an absorbing value: if $u_{t_0}=0$ then  for all $ t \geq t_0, u_{t}\equiv 0$.
\end{lem}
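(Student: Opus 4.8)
The plan is to dominate $u$ by the solution of the associated genuine logistic equation and to read off both conclusions from the explicit behaviour of that solution. Concretely, let $w$ solve the Lotka--Volterra/logistic ODE
\[
\dot w_t = a w_t - b w_t^2, \qquad w_0 = u_0 > 0.
\]
This equation is explicitly integrable: its only equilibria are $0$ and $a/b$, the latter being attracting on $(0,\infty)$, so that $w_t$ stays in the interval with endpoints $u_0$ and $a/b$; in particular $0 < w_t \le \max(u_0,a/b)$ for all $t\ge 0$. Thus $w$ is bounded, and it suffices to prove the pointwise comparison $u_t \le w_t$.

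For the comparison I would set $\delta_t := u_t - w_t$, so that $\delta_0 = 0$, and subtract the two relations. Using the factorisation $u^2 - w^2 = (u+w)(u-w)$ gives
\[
\dot\delta_t \le \big(a - b(u_t + w_t)\big)\,\delta_t =: c_t\,\delta_t .
\]
On any finite interval $[0,T]$ the functions $u$ and $w$ are continuous, hence $c_t$ is integrable there, and multiplying by the integrating factor $\mu_t := \exp(-\int_0^t c_s\,ds)>0$ turns the inequality into $\frac{d}{dt}\big(\mu_t\,\delta_t\big) \le 0$. Integrating from $0$ yields $\mu_t\,\delta_t \le \delta_0 = 0$, so $\delta_t \le 0$, i.e. $u_t \le w_t$, for every $t$. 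Combined with the hypothesis $u_t \ge 0$ this gives $0 \le \sup_{t\ge 0} u_t \le \max(u_0,a/b) < +\infty$, which is the first assertion.

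For the absorbing property I would run the same computation on $[t_0,\infty)$ but against the null logistic solution $w \equiv 0$, which solves the ODE with $w_{t_0}=0$. The integrating factor argument, now with $c_t = a - b\,u_t$, gives $u_t\,\exp(-\int_{t_0}^t c_s\,ds) \le u_{t_0} = 0$ for $t \ge t_0$, whence $u_t \le 0$; together with $u_t \ge 0$ this forces $u_t \equiv 0$ on $[t_0,\infty)$.

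The only point requiring care is that the rate $c_t$ in the integrating factor involves the unknown $u_t$ itself, so it is not a priori clear that $\int_0^t c_s\,ds$ is finite. This is resolved by working on finite horizons: a solution of the differential inequality is in particular continuous, hence locally bounded, so $c$ is locally integrable and the Gronwall-type estimate is valid on each $[0,T]$. Since the resulting bound $\max(u_0,a/b)$ does not depend on $T$, it passes to the supremum over all $t\ge 0$, and no global a priori bound on $u$ is needed to launch the argument. I expect this self-referential nature of $c_t$ to be the only genuine obstacle; everything else is the standard logistic comparison.
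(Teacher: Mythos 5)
Your proof is correct and rests on the same core idea as the paper's: dominate $u$ by the solution $w$ of the genuine logistic equation started from the same initial value, and reduce everything to the linear differential inequality $\dot\delta_t \le \bigl(a-b(u_t+w_t)\bigr)\delta_t$ with $\delta_0=0$. Where you differ is in how that inequality is resolved. The paper works with $\delta_t = U_t - u_t$ and argues qualitatively that $\delta$ must stay nonnegative, by tracking the first time $t_0$ at which $\delta$ could vanish and the first time $t_1$ at which it could become negative and deriving a contradiction from the sign of $\dot\delta$ at those times; this argument is slightly delicate (the step ``$\delta_0=0$ and $\dot\delta_0\ge 0$ imply $\delta\ge 0$ near $0$'' is not airtight when $\dot\delta_0=0$). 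Your integrating-factor/Gronwall version, $\frac{d}{dt}(\mu_t\delta_t)\le 0$ with $\mu_t=\exp(-\int_0^t c_s\,ds)>0$, sidesteps all of that and is more robust; your observation that local boundedness of the continuous solution $u$ suffices to make $c$ locally integrable, with the final bound independent of the horizon $T$, correctly disposes of the only self-referential issue. You also actually prove the absorbing property of $0$ by the same comparison against the null solution on $[t_0,\infty)$, a claim the paper states in the lemma but does not argue in its proof. Both routes yield the same bound $\sup_t u_t \le \max(u_0, a/b)$.
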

\begin{proof}{\bf of  Lemma \ref{ineglogistique}}.
Let us define 
$U_t $ as solution of the associated logistic equation
$$
\frac{\partial  U_t}{\partial t} = a U_t - b U_t^2, \quad U_0= u_0 .
$$
Then
$
\frac{\partial }{\partial t}(U_t - u_t)
\geq    a(U_t -u_t) - b (U_t^2 - u_t^2).
$
 With $\delta_t:= U_t - u_t$ it holds
$$
\frac{\partial }{\partial t}\delta_t
\geq    \Big( a - b (U_t + u_t) \Big)\delta_t, \quad \delta_0=0.
$$
Let us show that $t\mapsto \delta_t$ increases, and therefore is positive.
For $t= 0$, since $\delta_0=0, \,\frac{\partial \delta_t}{\partial t}|_{t=0}  \geq   0 $. Thus $\delta_t \geq 0$ in a neighborhood of  0.\\
Let define  $t_0:= \sup\{t>0 :\delta_t=0 \}$. If $t_0 = + \infty$ the  problem is solved. \\
If not,  $  U_t \equiv  u_t $ on $[0, t_0]$.
Let us now define $t_1:= \inf\{t>t_0 :\delta_t<0 \}$. If $t_1 = + \infty$  the  problem is solved. If $t_1 < +\infty$,  by continuity $\delta_{t_1}=0$ and then $\frac{\partial \delta_t}{\partial t}|_{t=t_1} \geq 0$. Thus,
in a small time intervall  after $t_1$, $\delta_t$ would increase  and be positive, which is a contradiction with the definition of $t_1$.
Therefore  $t\mapsto \delta_t$ increases and stays positive, which implies that
$$
0 \leq \overline{u} := \sup_{t \geq 0} u_t \leq \sup_{t \geq 0} U_t < + \infty.
$$
\end{proof}\\
Let us now recall some properties of the Lotka-Volterra's flow involved in the cell dynamics.
\begin{lem}\label{lotka-volterra} 
Let $t_{0}\in [0,T]$, $x\in {\cal  X} $ and $y=( y_1, y_2)\in \mathbb{R}_+^2 $ be given.
The differential equation
\begin{eqnarray} \label{eq:LVdim2} 
\frac{\partial }{\partial t} y(t) = c(x, y(t)),\ t \in [t_0,T], \textrm{ with } 
y(t_0)= y
\end{eqnarray} where $c$ defined in \eqref{def-c},
 admits  in $\mathbb{R}_+^2 $ a unique solution   $t \mapsto\varphi^{t_{0},y}_{x}(t)=(\varphi^{t_{0},y}_{x,1}(t),\varphi^{t_{0},y}_{x,2}(t))$. Moreover the mapping $(x, t, s, y) \mapsto  \varphi^{s,y}_{x}(t)$ is $C^0$ in $x\in {\cal  X} $ and $C^\infty$ in $t,s,y\in [0,T]^2\times \mathbb{R}_+^2$ and is a characteristic flow in the sense that for all $s, t, u$, 
\be
\label{flow}
\varphi^{s,y}_{x}(t) = \varphi^{u,z}_{x}(t), \quad \hbox{ where } z = \varphi^{s,y}_{x}(u).
\ee
\end{lem}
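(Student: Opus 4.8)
The statement concerns the planar Lotka--Volterra ODE $\frac{\partial}{\partial t} y(t) = c(x,y(t))$ with the vector field $c$ given by \eqref{def-c}, and asks for three things: existence and uniqueness of a solution staying in $\mathbb{R}_+^2$, the regularity of the flow $(x,t,s,y)\mapsto \varphi^{s,y}_x(t)$ jointly in its arguments, and the characteristic (semigroup/cocycle) property \eqref{flow}. The plan is to handle these in turn, leaning on the smoothness of $c$ in $y$ and its continuity in $x$, which follow from the standing regularity of $b_i,d_i,\beta_i$ under (H2)~3) (these are $C^1$, hence $r_i=b_i-d_i$ and $\beta_i$ are continuous in $x$ and the polynomial dependence on $y$ makes $c$ smooth in $y$).

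First I would establish local existence and uniqueness. Since $c(x,\cdot)$ is a quadratic polynomial in $y$, it is locally Lipschitz in $y$ (in fact $C^\infty$), uniformly for $x$ in the compact set $\mathcal{X}$; the Cauchy--Lipschitz (Picard--Lindel\"of) theorem then gives a unique maximal solution. The key point to verify is that the solution remains in $\mathbb{R}_+^2$ and does not blow up in finite time on $[t_0,T]$, so that it is in fact global on the interval of interest. For invariance of the positive quadrant, I would observe that the axes are invariant: when $y_1=0$ the first component $c_1$ vanishes (since $c_1=y_1(\dots)$), and similarly for $y_2$, so no trajectory started in the open quadrant can cross an axis, and a trajectory on an axis stays there. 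For global existence (no blow-up), I would derive an a priori bound: each component satisfies $\frac{\partial}{\partial t} y_i \le r_i(x) y_i - \beta_i(x)\lambda_{ii} y_i^2$ (discarding the nonnegative cross-competition term $-\beta_i \lambda_{ij} y_j y_i \le 0$, using nonnegativity of the $\lambda_{ij}$ and of $y_j$), which is exactly the logistic-type differential inequality of Lemma \ref{ineglogistique} with $a=\sup_{\mathcal X} r_i$, $b=\inf_{\mathcal X}\beta_i\lambda_{ii}$. That lemma then yields a finite uniform bound $\overline{y}_i$ on each $y_i$, ruling out explosion and confirming the solution exists and stays in $\mathbb{R}_+^2$ on all of $[t_0,T]$.

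Next comes the regularity of the flow. Joint $C^\infty$ dependence in $(t,s,y)$ is the classical smooth-dependence-on-initial-conditions theorem for ODEs with a $C^\infty$ right-hand side (the dependence on the initial time $s$ reduces to dependence on initial data via the flow relation, and $C^\infty$ in $t$ is immediate since the solution solves a smooth ODE). The dependence on the parameter $x$ is only $C^0$ because $c$ is merely continuous in $x$; here I would invoke the standard continuous-dependence-on-parameters theorem, noting that the uniform bound from the previous step keeps trajectories in a fixed compact set where continuity is uniform. Finally, the characteristic property \eqref{flow} is the autonomous-flow cocycle identity: for fixed $x$ the ODE is autonomous in $t$, so evolving from time $s$ to $u$ and then from $u$ to $t$ gives the same point as evolving directly from $s$ to $t$, which is precisely uniqueness of solutions applied to the two trajectories agreeing at time $u$. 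The main obstacle, and the only genuinely non-routine point, is ensuring global existence on $[0,T]$ in the positive quadrant; this is exactly why Lemma \ref{ineglogistique} was proved first, and once that a priori bound is in hand the remaining assertions are standard ODE theory.
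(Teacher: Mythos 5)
Your proposal is correct and follows essentially the same route as the paper: standard Cauchy--Lipschitz theory for the $C^1$ (in fact polynomial in $y$) vector field, with the only substantive point being non-explosion on $[t_0,T]$, obtained by dominating each component by the decoupled logistic equation and invoking Lemma \ref{ineglogistique}. The paper additionally records the exponential integral representation of the flow, which gives the invariance of $\mathbb{R}_+^2$ that you argue via invariance of the axes; the two justifications are equivalent.
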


\begin{proof}{\bf of  Lemma \ref{lotka-volterra}}.
Since the coefficients $c_{i}$ are of class $C^1$ and thus locally bounded  with locally bounded derivatives, the lemma  is standard (cf \cite{Golse06}) as soon as the  solution does not explode in finite time. The latter is obvious, since  the quadratic terms are non positive. Indeed, the functions $(y_{1}, y_{2})$  are dominated  by the solution $(z_{1}, z_{2})$ of the system 
\begin{eqnarray*} \label{eq:LVdim2mono} 
\frac{\partial }{\partial t} z_i(t) = r_i(x) z_{i}(t) -\beta_i(x) \lambda_{ii} z_i^2\ ;\ 
z_i(0) = y_i , \quad i=1,2,
\end{eqnarray*}
and we use Lemma \ref{ineglogistique}.

\noindent The flow clearly  satisfies 
\ben
\varphi^{t_{0},y}_{x,1}(t) &=& y_{1} \exp\left(\int_{t_0}^t (r_1(x) - \beta_1(x) \big( \lambda_{11} \varphi^{t_{0},y}_{x,1}(s)+ \lambda_{12}\varphi^{t_{0},y}_{x,2}(s)  \big)ds\right),\\
\varphi^{t_{0},y}_{x,2}(t) &=& y_{2} \exp\left(\int_{t_0}^t (r_2(x) - \beta_2(x) \big( \lambda_{21} \varphi^{t_{0},y}_{x,1}(s) + \lambda_{22} \varphi^{t_{0},y}_{x,2}(s)\big)ds\right).
\een
\end{proof}

\bigskip \noindent 
\noindent The proof of uniqueness will be based on the mild equation satisfied by any solution of (\ref{eq:limit}). Let us consider a function $G$ defined on ${\cal  X} \times \mathbb{R}_{+}^2$ of class $C^1$ on the two last variables and for any $x\in {\cal  X}$, let us define the first-order differential operator
\be
\label{def:L}
{\cal L}G (x,y) := c_1(x,y) \frac{\partial G}{\partial y_1}(x,y) + c_2(x, y) \frac{\partial G}{\partial y_2} (x,y) = c\cdot \triangledown_y G \, (x,y) 
,\ee
where the notation $\cdot$ means the scalar product in $\mathbb{R}^2$. \\
Then the function $\tilde G(s,t,x,y):= G(x,\varphi^{s,y}_{x}(t))$ satisfies
\begin{eqnarray} \label{edp:G}
\frac{\partial }{\partial t} \tilde G & =& 
\triangledown_y G (x, \varphi^{s,y}_{x}(t)) \cdot \frac{\partial }{\partial t} \varphi^{t_{0},y}_{x}(t) \nonumber \\
& =&  c(x, \varphi^{t_{0},y}_{x}(t)) \cdot \triangledown_y G (x,\varphi^{s,y}_{x}(t))\nonumber\\
&=& {\cal L}G (x,\varphi^{s,y}_{x}(t)) = {\cal L} \tilde G
\end{eqnarray} 
Let us fix $t>0$. We deduce from (\ref{edp:G}) and from the flow property \eqref{flow} that $\tilde G$ satisfies the backward transport equation:
\be
\label{eqntransport}
\frac{\partial }{\partial s} \tilde G +  {\cal L}\tilde G = 0, \ \forall s\leq t\ \textrm{ with }\quad \tilde G(t,t,x,y) = G(x, y). \ee

\noindent We now write \eqref{eq:limit} applying the measure $v_t$ to the time-dependent function  $(s,x,y) \mapsto \tilde G(s,t, x, y)$ 
where $G(x, y)=f(x)g(y)$ 
and obtain the mild equation
\begin{align}
\label{eq:mild}
\langle v_t,fg\rangle & =\langle v_0,  f\, g\circ  \varphi^{0,y}_{x}(t)\rangle  +\int_0^t\int_{{\cal X}\times \rit_+^2} \bigg\{\Big(B(x,y_1,y_2)(1-p(x)) \notag \\
& \hskip 3cm -\big(D(x,y_1,y_2)+\alpha(x,y_1,y_2)\ U*v_s(x,y_1,y_2)\big)\Big)
  f(x) g\circ  \varphi^{s,y}_{x}(t)\notag \\
&  + p(x)B(x,y_1,y_2)\int_{}  f(x+z) g\circ \varphi^{s,y}_{x+z}(t) M(x,z)dz \bigg\}
     v_s(dx,dy_1,dy_2)\  ds.
\end{align}
(The last term involving the quantity $\frac{\partial }{\partial s} g\circ \varphi^{s,y}_{x}(t) +  {\cal L}g (\varphi^{s,y}_{x}(t))$ vanishes by \eqref{eqntransport}.)\\

\bigskip \noindent 
Let us now consider two  continuous functions $v$ and $\bar v$ in $C([0,T],M_F({\cal X}\times \mathbb{R}_+^2))$ solutions of \eqref{eq:limit}  with the same initial condition $v_{0}$. Then the difference of both solutions satisfies
\begin{align*}
& \langle v_t - \bar v_{t} ,fg_1g_2\rangle  =
\int_0^t\int_{{\cal  X}\times \rit_+^2} \bigg\{\bigg[\Big(B(x,y_1,y_2)(1-p(x))   -D(x,y_1,y_2)\Big) f(x) g\circ \varphi^{s,y}_{x}(t) \\
&
+ p(x)B(x,y_1,y_2)\int_{} f(x+z) g\circ\varphi^{s,y}_{x+z}(t) M(x,z)dz\bigg]
   (  v_s(dx,dy_1,dy_2)-\bar v_s(dx,dy_1,dy_2))\\
&- \alpha(x,y_1,y_2) f(x) g\circ  \varphi^{s,y}_{x}(t) \Big( U*v_s(x,y_1,y_2) v_s(dx,dy_1,dy_2)- U*\bar v_s(x,y_1,y_2) \bar v_s(dx,dy_1,dy_2)\Big)\bigg\} ds.
\end{align*}
The finite variation norm of a measure $v$ is defined as usual by
$$\|v\|_{FV} := \sup \{ \langle v, h \rangle, h \hbox{ measurable and bounded by } 1\}.$$
Since all coefficients are bounded as well as 
the total masses of $v_{t}$ and $\bar v_{t}$ , it is easy to show that there exists a constant $C_{T}$ such that
\ben
\|v_{t} -\bar v_t\|_{FV} \leq C_{T} \int_{0}^t \|v_{s} -\bar v_s\|_{FV} \, ds,
\een
which implies, by Gronwall's Lemma, that $v$ and $\bar v$ are equal .
\end{proof}

\noindent
Let us now prove that if the initial measure  has a density with respect to  Lebesgue measure, then there exists a unique function solution of \eqref{edptransport}. That gives a general  existence and uniqueness result for such nontrivial equations  with  nonlinear reaction and transport terms, and a nonlocal term involved by the mutation kernel. The existence takes place in a very general set of $L^1$-functions. 

\begin{prop}
\label{prop:density}
Assume that the initial measure $v_{0}$ admits a density $\phi_0$ with respect to the Lebesgue measure $dxdy_{1}dy_{2}$; then for each $t>0$, the measure $v_{t}$ solution of \eqref{edptransport} also admits a density. 
\end{prop}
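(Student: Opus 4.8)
The plan is to read the mild formulation \eqref{eq:mild} as a Duhamel (variation-of-constants) identity in which the underlying semigroup is the pushforward along the Lotka--Volterra characteristic flow of Lemma \ref{lotka-volterra}, and then to show that this semigroup together with the two source operators preserves absolute continuity. For $s\le t$ and $x\in{\cal X}$ fixed, introduce the map $\Phi_{s,t}(x,y):=(x,\varphi^{s,y}_x(t))$ on ${\cal X}\times\rit_+^2$, which acts as the identity on the trait and as a flow on the cell variables $y=(y_1,y_2)$, and denote by $(\Phi_{s,t})_\#$ the associated pushforward on $M_F({\cal X}\times\rit_+^2)$. With this notation \eqref{eq:mild} reads
$$ v_t=(\Phi_{0,t})_\# v_0+\int_0^t (\Phi_{s,t})_\#\big(a_s\,v_s\big)\,ds+\int_0^t (\Phi_{s,t})_\#\,\mu^{\mathrm{mut}}_s\,ds, $$
where $a_s(x,y):=B(x,y)(1-p(x))-\big(D(x,y)+\alpha(x,y)\,U*v_s(x)\big)$ is the reaction coefficient read off from \eqref{eq:limit} (with $U*v_s(x)=\langle v_s,U(x-\cdot)\rangle\le\bar U\langle v_s,1\rangle$ a known, bounded-in-$x$ function), and $\mu^{\mathrm{mut}}_s$ is the mutation source defined by $\langle\mu^{\mathrm{mut}}_s,\psi\rangle=\int p(x)B(x,y)\int\psi(x+z,y)M(x,z)\,dz\,v_s(dx,dy)$.

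The first step is to check that $(\Phi_{s,t})_\#$ preserves absolute continuity. For fixed $x$ the map $y\mapsto\varphi^{s,y}_x(t)$ is a $C^\infty$ diffeomorphism of $\rit_+^2$ (Lemma \ref{lotka-volterra}) whose Jacobian equals $\exp\big(\int_s^t \nabla_y\!\cdot c(x,\varphi^{s,y}_x(\tau))\,d\tau\big)$ by Liouville's formula; since $\nabla_y\!\cdot c$ is affine in $y$ and the trajectory stays bounded on $[s,t]$ by the logistic comparison of Lemma \ref{ineglogistique}, this Jacobian is finite and strictly positive. A fibrewise change of variables $\eta=\varphi^{s,y}_x(t)$, inverted via the flow property \eqref{flow} as $y=\varphi^{t,\eta}_x(s)$, then shows that if $\mu$ has density $\phi$ then $(\Phi_{s,t})_\#\mu$ has the explicit measurable density $(x,\eta)\mapsto\phi(x,\varphi^{t,\eta}_x(s))\,J(x,s,t,\eta)$. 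In particular $(\Phi_{0,t})_\# v_0$ is absolutely continuous because $v_0$ is.

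The second step is that, whenever $v_s$ is absolutely continuous with density $\phi_s$, both source measures are absolutely continuous: $a_s v_s$ has density $a_s\phi_s$, which lies in $L^1$ thanks to the linear growth of $D$ and $\alpha$ in $y$ combined with the a priori moment bounds $\langle v_s,1\rangle,\ \langle v_s,y_1+y_2\rangle<+\infty$ read off from \eqref{totalmass}--\eqref{cellmassgen}; while $\mu^{\mathrm{mut}}_s$ has density $(w,y)\mapsto\int p(x)B(x,y)M(x,w-x)\phi_s(x,y)\,dx$, the convolution with the mutation density $M(x,\cdot)$ supplying the smoothing in the trait variable (here one uses $M(x,z)\le\bar C\bar M(z)$ from $(H1)$). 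Combining the two steps, the whole right-hand side of the Duhamel identity maps absolutely continuous flows to absolutely continuous flows.

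To conclude I would run a Picard iteration on the Duhamel equation—now \emph{linear} once the nonlocal factor $U*v_s$ is frozen to the already-constructed solution—starting from the absolutely continuous flow $(\Phi_{0,t})_\# v_0$: by the two steps every iterate is absolutely continuous, and the iteration converges to $v$ uniformly on $[0,T]$ for the finite-variation norm $\|\cdot\|_{FV}$ by exactly the Gronwall estimate used in the uniqueness part of Theorem \ref{largepoplargecells}. Since the set of absolutely continuous measures is closed for $\|\cdot\|_{FV}$ (a finite-variation limit of measures each vanishing on every Lebesgue-null set again vanishes on such sets), the limit $v_t$ admits a density for every $t$, as claimed. \emph{The main obstacle} is that $D$ and $\alpha$ grow linearly in $y$, so multiplication by $a_s$ raises the order of the cell moments and a contraction in pure $\|\cdot\|_{FV}$ does not close; the estimate must instead be carried out in a norm controlling a finite $(1+y_1+y_2)$-moment, exploiting the a priori moment bounds together with the inward (damping) character of the flow for large $y$, so that the moment bookkeeping stays uniform and the fixed-point argument converges despite the unbounded coefficients.
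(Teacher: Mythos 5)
Your proposal follows the same broad strategy as the paper: rewrite \eqref{eq:mild} as a Duhamel identity along the Lotka--Volterra characteristics, show that each ingredient (pushforward by the flow, reaction term, mutation term) propagates absolute continuity, iterate, and pass to the limit using that absolutely continuous measures form a closed set for $\|\cdot\|_{FV}$ (equivalently, that the densities converge in $L^\infty([0,T],L^1)$). Your Step 1, with the Liouville formula for the Jacobian and the inversion of the flow via \eqref{flow}, is a nice explicit version of what the paper leaves implicit, and your observation that the mutation term is handled by $M(x,z)\le\bar C\bar M(z)$ matches the paper.

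The genuine divergence is in the iteration scheme, and it bears exactly on the obstacle you flag at the end. You run a fully explicit Picard iteration, so the signed coefficient $a_s$, which contains $-D-\alpha\,U*v_s$ and hence grows linearly in $y$ by $(H1)$, must be estimated in absolute value; as you correctly note, the contraction then does not close in the plain $\|\cdot\|_{FV}$ norm and you are forced into weighted norms and moment propagation along the iterates, which you only sketch. The paper's scheme \eqref{eq:linmild} is instead \emph{semi-implicit}: the gain terms (clonal birth and mutation, with bounded coefficients $B$, $pB$) are evaluated at $\phi^n$, while the loss terms $-(D+\alpha\,U*\phi^n)\phi^{n+1}$ are kept at level $n+1$. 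Combined with the maximum principle, which gives $\phi^{n+1}\ge 0$, the loss terms are nonpositive and can simply be dropped when testing against $f=g=1$, so the uniform $L^1$ bound \eqref{L1-bound} follows by Gronwall with a constant involving only the bounded gain coefficients --- no weighted norm is needed. If you wish to keep your explicit Picard route you should either carry out the $(1+y_1+y_2)$-weighted estimate in detail (propagating the first cell moment through the iteration, which is available from \eqref{cellmassgen}), or switch to the paper's implicit placement of the loss terms; as written, the convergence step of your argument is the one piece that is asserted rather than proved. Everything else in your proposal is sound.
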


\begin{proof}
Let us come back to the equation \eqref{eq:mild} satisfied by $v$. Using basic results on linear parabolic equations, we construct by induction a sequence of functions $(\phi^{n})_{n}$ satisfying in a weak sense the following semi-implicit scheme: $\phi^{n+1}_0 \equiv  \phi_{0}$ and

\begin{align}
\label{eq:linmild}
&\langle \phi^{n+1}_t,fg\rangle  =\langle \phi_{0},  f\, g\circ  \varphi^{0,y}_{x}(t)\rangle  +\int_0^t\int_{{\cal X}\times \rit_+^2} \Big[\bigg\{\Big(B(x,y_1,y_2)(1-p(x))  f(x) g\circ  \varphi^{s,y}_{x}(t)  \notag \\
&  \hskip 2cm + p(x)B(x,y_1,y_2)\int_{}  f(x+z) g\circ \varphi^{s,y}_{x+z}(t) M(x,z)dz \bigg\}
     \phi^n_s(x, y_{1},y_{2})\notag \\
     &-\big(D(x,y_1,y_2)+\alpha(x,y_1,y_2)\ U*\phi^n_s(x,y_1,y_2)\big)\Big)
  f(x) g\circ  \varphi^{s,y}_{x}(t)  \phi^{n+1}_s(x, y_{1},y_{2}) 
\Big] dxdy_1dy_2 ds.
\end{align}

\me
Thanks to the nonnegativity of $\phi_{0}$ and of the parameters $B$, $p$, $1-p$, and applying the maximum principle for transport equations (Cf. \cite{Golse06}), we can show that the functions $\phi^n$ are nonnegative.     
   \me  Taking $f=g=1$ and 
     thanks to the nonnegativity of the functions $\phi^{n}$ and to the boundedness of the coefficients  we get
     \ben
   \sup_{s\leq t}  \|\phi^{n+1}_{s}\|_{1} \leq \|\phi_{0}\|_1 +C_1\, \int^t_{0} \sup_{u\leq s}\|\phi^{n}_{u}\|_{1} du,
     \een
     where the constant $C_{1}$ does not depend on $n$ and can be chosen uniformly on [0,T].
   By  Gronwall's Lemma, we conclude that
     \be 
       \label{L1-bound} 
     \sup_{n}\sup_{t\leq T}  \|\phi^{n}_{t}\|_{1}  \leq \|\phi_{0}\|_1\, e^{C_{1}T}.
     \ee
     
     \me Let us now prove the convergence of the sequence $\phi^n$ in $L^{\infty}([0,T], L^1)$. A straightforward computation using \eqref{eq:linmild}, \eqref{L1-bound}, the assumptions on the coefficients and similar arguments as above yields
     \ben
     \sup_{s\leq t}  \|\phi^{n+1}_{s}-\phi^{n}_{s}\|_{1}  \leq  C_{2}\, \int^t_{0} \Big(
      \sup_{u\leq s}  \|\phi^{n+1}_{u}-\phi^{n}_{u}\|_{1} +  \sup_{u\leq s}  \|\phi^{n}_{u}-\phi^{n-1}_{u}\|_{1}\Big) ds,
      \een
    where $C_{2}$ is a positive constant independent of $n$ and $t\in[0,T]$. It follows from Gronwall's Lemma that for each $t\leq T$ and $n$,
    $$   \sup_{s\leq t}  \|\phi^{n+1}_{s}-\phi^{n}_{s}\|_{1}  \leq  C_{3}\, \int^t_{0} \sup_{u\leq s}  \|\phi^{n}_{u}-\phi^{n-1}_{u}\|_{1}\, ds.$$
    We conclude that the series $\ \sum_{n}\sup_{t\in [0,T]}  \|\phi^{n+1}_{t}-\phi^{n}_{t}\|_{1} \ $ converges  for any $T>0$. Therefore the sequence of  functions $(\phi^n)_{n}$ converges in $L^{\infty}([0,T], L^1)$ to a continuous function $t \mapsto \phi_t$  satisfying
    $$\sup_{t\leq T}  \|\phi_{t}\|_{1}  \leq \|\phi_{0}\|_1\, e^{C_{1}T}.$$
    Moreover, since the sequence converges in $L^1$, the limiting  measure $\phi_{t}(x,y_{1},y_{2}) dx dy_{1}dy_{2}$ is solution of \eqref{eq:mild} and then it is its unique solution. 
    Hence, that implies that for all $t$, 
    $$v_{t}( dx, dy_{1},dy_{2})= \phi_{t}(x,y_{1},y_{2})\,  dx dy_{1}dy_{2}. $$
       \end{proof}
       
  \me We have thus proved that  the  nonlinear integro-differential equation   \eqref{edptransport} admits a unique weak function-valued solution as soon as the initial condition $\phi_0$ is a $L^1$-function, without any additional regularity assumption.

\subsection{Stationary states under a mean field assumption and without trait mutation}

This part is a first step in the research of  stationary states for the deterministic measure-valued process $(v_t, t\geq 0)$ defined above. 
We firstly remark that equation (\ref{totalmass}), which determines the evolution 
of the total number of individuals $t
\mapsto  \langle v_t,1 \rangle$, is not closed if the functions $U, B, D$ or $\alpha$ are not constant, which makes the problem very hard. 
In this section we consider the simplest case where  the individual ecological
parameters $B$ and $D$ and 
the cell ecological parameters $b_i$ and $d_i$ are constant and where the mutation probability $p$ vanishes.
Moreover, we work under the mean field assumption, 
that is the competition/selection kernel $U$ is a constant. 
We consider two different cases corresponding to different selection rates $\alpha$.

\subsubsection{Case with constant selection rate}

Let us  assume  that  the selection rate $\alpha$ is constant. 
In this case, the mass equation (\ref{totalmass}) is closed and reduces to the
standard logistic equation
\begin{align}
\label{closemass} \langle v_t,1\rangle  =\langle 
v_0,1\rangle  +\int_0^t  \langle v_s,1\rangle \big((B - D) -\alpha
U\langle v_s,1\rangle \big)\ ds,
\end{align}
whose asymptotical behavior is well known: the mass of 
any stationary measure $v_{\infty}$ satisfies
$$
(B - D)\langle v_{\infty},1\rangle =\alpha U\langle v_{\infty},1\rangle^2.
$$
Either $R:= B-D \leq 0$ and there is extinction of the population, that is 
$$\lim_{t \rightarrow +\infty}\langle v_t,1\rangle = \langle v_{\infty},1\rangle = 0.
$$
Or $R>0$ and the mass of the population converges to a non degenerate value
\be \label{stat} 
\lim_{t \rightarrow +\infty}\langle v_t,1\rangle = \langle v_{\infty},1\rangle = \frac{R}{\alpha U} .
\ee
Furthermore, the convergence of the mass holds exponentially fast: 
due to (\ref{closemass}), 
$$ \frac{\partial }{\partial t} \langle v_t - v_{\infty},1 \rangle= - \alpha U \langle v_t,1\rangle \langle v_t - v_{\infty},1 \rangle .$$
\be
\label{vitconvmasse}
\textrm{Thus }\qquad \langle v_t - v_{\infty},1 \rangle = \langle v_0 - v_{\infty} , 1 \rangle \, e^{-\alpha U \int_0^t \langle v_s,1 \rangle \ ds} 
\ee
which vanishes exponentially fast. 

\medskip
\noindent Assume $R>0$ in such a way that the mass of the population does not vanish.
 In what follows we will need the following notations:
$$
\langle \overline{v,1} \rangle := \sup_t \langle v_t,1 \rangle < +\infty
$$
and 
$$
\bar\alpha := \sup_t \alpha_t (< +\infty) \quad \textrm{ where } \quad \alpha_t := R-\alpha U \langle v_t,1\rangle = - \alpha U \langle v_t - v_{\infty},1\rangle 
.
$$

\noindent Let us now consider the weak convergence of the measures $v_t$ towards the stationary measure $v_\infty$, which is concentrated on the equilibrium state of the Lotka-Volterra dynamics.

\noindent Applying equation (\ref{eq:limit}) to any bounded smooth fonction $g(y)=g_1(y_1)g_2(y_2)$,
\begin{eqnarray} \label{eq:vtg}
\frac{\partial }{\partial t}\langle v_t,g \rangle & =&
\alpha_t \langle v_t,g \rangle + \langle v_t,r_1 y_1 \frac{\partial g }{\partial y_1} + r_2 y_2 \frac{\partial g }{\partial y_2} \rangle \nonumber\\
&&- \langle v_t, \beta_1 \frac{\partial g }{\partial y_1}\big( \lambda_{11} y_1 + \lambda_{12} y_2\big) y_1
+ \beta_2 \frac{\partial g }{\partial y_2} \big( \lambda_{21} y_1 + \lambda_{22} y_2\big) y_2
\rangle \nonumber\\
& = & \alpha_t \langle v_t,g \rangle + \langle v_t,{\cal L}g \rangle
\end{eqnarray} 
where the differential first order operator ${\cal L}= c \cdot \triangledown  $ is the same as in (\ref{def:L}) but without dependence on the trait $x$.
Using the flow of Lotka-Volterra equation (see (\ref{eq:LVdim2})), we represent the mild solution of (\ref{eq:vtg}) as

\begin{equation}
\label{eq:vtgmild}
\langle v_t,g \rangle = \int_{\mathbb{R}^2_+}  g\circ \varphi^{0,y }(t) \, v_0(dy) +
\int_0^t  \alpha_s  \int_{\mathbb{R}^2_+} g\circ \varphi^{s,y }(t)  \,  v_s (dy) \, ds .
\end{equation}

\noindent Let us firstly recall the long time behavior of the Lotka-Volterra system \eqref{eq:LVdim2}  in case where the coefficients $c_{i}$ don't depend on $x$(see Istas \cite{Istas05}).
\begin{lem}
\label{proporcells}
\label{LTBLV}
Any solution of 
\begin{eqnarray} \label{eq:LVdim2const} 
\frac{\partial }{\partial t} y_1(t)&=&  y_1(t) \left(r_1 - \beta_1 \big( \lambda_{11} y_1(t) + \lambda_{12} y_2(t) \big)\right) \nonumber\\ 
\frac{\partial }{\partial t} y_2(t)&=&  y_2(t)\left(r_2  - \beta_2 \big( \lambda_{21} y_1(t) + \lambda_{22} y_2(t) \big) \right)
\end{eqnarray} 
with non-zero initial condition in ${\mathbb R}_{+}^2$  converges for $t$ large  to a finite limit, called equilibrium and denoted by $\pi=(\pi_{1}, \pi_{2})\in {\mathbb R}_{+}^2\setminus \{(0,0)\}$. It takes the following values:
\begin{enumerate}
\item $\pi=({r_{1}\over \beta_{1}\lambda_{{11}}},0)\quad $ if $\quad r_{2}\lambda_{{11}}- r_{1}\lambda_{21}< 0$ (resp. $=0$ and $r_{1}\lambda_{{22}}- r_{2}\lambda_{12}>0$).

\item   $\pi=(0,{r_{2}\over \beta_{2}\lambda_{22}}) \quad $ if $\quad r_{1}\lambda_{{22}}- r_{2}\lambda_{12}<0$ (resp. $=0$ and $r_{2}\lambda_{11}- r_{1}\lambda_{21}>0$ ).

\item If  $\quad r_{2}\lambda_{{11}}- r_{1}\lambda_{12}>0$ and $r_{1}\lambda_{{22}}- r_{2}\lambda_{21}>0$  
\begin{equation}
\label{equ:proporcells}
\pi=
\Big(\frac{\beta_1 \lambda_{12}(b_2-d_2)- \beta_2 \lambda_{22}(b_1-d_1)}{\beta_1\beta_2 (\lambda_{12}\lambda_{21}-\lambda_{11}\lambda_{22})},\frac{\beta_2 \lambda_{21}(b_1-d_1)- \beta_1 \lambda_{11}(b_2-d_2)}{\beta_1\beta_2 (\lambda_{12}\lambda_{21}-\lambda_{11}\lambda_{22})} \Big) .
\end{equation}

\end{enumerate}

\end{lem}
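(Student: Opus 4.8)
The plan is to treat \eqref{eq:LVdim2const} as a planar competitive Lotka--Volterra system and to combine the boundedness already furnished by Lemma \ref{ineglogistique}, the Poincar\'e--Bendixson theory (with Dulac's criterion to exclude cycles), and a linearized stability analysis of the boundary states to decide which equilibrium attracts a given trajectory. First I would record the qualitative structure of the flow: the two coordinate semi-axes and the origin are invariant, since $\frac{\partial}{\partial t}y_i(t)=0$ whenever $y_i(t)=0$, so by the uniqueness asserted in Lemma \ref{lotka-volterra} a trajectory issued from the open quadrant remains in the open quadrant and one issued from a semi-axis stays on it. Each coordinate is moreover dominated by the logistic comparison of Lemma \ref{ineglogistique}, so $\sup_t y_i(t)<+\infty$ and every trajectory has a nonempty, compact, connected $\omega$-limit set contained in $\mathbb{R}_+^2$.

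Next I would enumerate the equilibria by solving $c(y)=0$ with $c$ as in \eqref{def-c} (dropping the $x$-dependence): besides the origin one obtains the two axial states $(r_1/(\beta_1\lambda_{11}),0)$ and $(0,r_2/(\beta_2\lambda_{22}))$, and at most one interior state, found by Cramer's rule from the linear system $r_i=\beta_i(\lambda_{i1}y_1+\lambda_{i2}y_2)$; this is precisely \eqref{equ:proporcells}, and it lies in the open quadrant exactly under the sign conditions of case 3. To exclude periodic orbits I would apply Dulac's criterion on the open quadrant with Dulac function $B(y)=1/(y_1 y_2)$: a direct computation gives $\frac{\partial}{\partial y_1}(Bc_1)+\frac{\partial}{\partial y_2}(Bc_2)=-\beta_1\lambda_{11}/y_2-\beta_2\lambda_{22}/y_1<0$, so no closed orbit can lie in the interior, while the axial faces carry only one-dimensional logistic flows and hence no cycles.

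Poincar\'e--Bendixson then forces each bounded $\omega$-limit set to reduce to a single equilibrium or to a heteroclinic cycle; since the Jacobian at the origin is $\mathrm{diag}(r_1,r_2)$ with $r_1,r_2>0$ (a source) and the interior divergence is strictly negative, the competitive (monotone) structure of the planar system rules out heteroclinic cycling, so every interior trajectory converges to a single equilibrium away from the origin, giving $\pi\neq(0,0)$. To decide which one, I would linearize at the axial states: the transverse eigenvalue at $(r_1/(\beta_1\lambda_{11}),0)$ is $r_2-\beta_2\lambda_{21}\,r_1/(\beta_1\lambda_{11})$, an invasion criterion whose sign reproduces the threshold listed in case 1, and symmetrically at the other axis for case 2, whereas in case 3 both axial states are transversally unstable and the interior equilibrium becomes the global attractor among interior trajectories.

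The main obstacle I anticipate is the careful bookkeeping of these stability/sign conditions together with the borderline ``resp.\ $=0$'' sub-cases, where a transverse eigenvalue vanishes and a direct monotonicity (or center-manifold) argument is needed to assign the correct limit. An attractive alternative, which I would adopt to streamline the case analysis, is to bypass the Jacobian discussion altogether and invoke the global theory of two-dimensional competitive systems: every bounded orbit of such a system converges to an equilibrium, and one then reads off the attractor from the relative position of the two nullclines, as in \cite{Istas05}.
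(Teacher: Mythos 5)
The paper does not actually prove this lemma: it is presented as a recalled fact with a pointer to the textbook of Istas \cite{Istas05}, so your proposal is a genuine proof where the paper supplies none. Your route --- invariance of the axes, boundedness via the logistic comparison of Lemma \ref{ineglogistique}, exclusion of closed orbits by Dulac's criterion with $1/(y_1y_2)$ (your divergence computation $-\beta_1\lambda_{11}/y_2-\beta_2\lambda_{22}/y_1<0$ is correct), Poincar\'e--Bendixson, and invasibility eigenvalues at the axial equilibria --- is the standard phase-plane argument for planar competitive Lotka--Volterra systems, and the streamlined alternative you mention (Hirsch's theorem that every bounded orbit of a planar competitive system converges to an equilibrium, followed by reading off the attractor from the nullclines) is indeed the cleanest way to organize it. The identification of the interior equilibrium by Cramer's rule matches \eqref{equ:proporcells}.

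Two points deserve attention. First, your own (correct) computation of the transverse eigenvalue at $\bigl(r_1/(\beta_1\lambda_{11}),0\bigr)$ gives $r_2-\beta_2\lambda_{21}r_1/(\beta_1\lambda_{11})$, whose sign is that of $r_2\beta_1\lambda_{11}-r_1\beta_2\lambda_{21}$; this agrees with the printed threshold $r_2\lambda_{11}-r_1\lambda_{21}<0$ only when $\beta_1=\beta_2$ (and the inequalities displayed in case 3 swap $\lambda_{12}$ and $\lambda_{21}$ relative to cases 1 and 2). So your argument does not quite ``reproduce the threshold listed'' --- rather it exposes that the conditions as printed appear to omit the $\beta_i$ factors; you should state the invasibility conditions in the form your eigenvalue computation actually yields. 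Second, the regime in which both axial equilibria are transversally stable (bistability) is not covered by the lemma's case list, and there the limit depends on which side of the separatrix the initial condition lies; your final sentence about reading the attractor off the nullclines implicitly assumes this case is excluded, so it should be flagged explicitly, as should the degenerate sub-cases with a vanishing transverse eigenvalue, where, as you note, a one-sided monotonicity or center-manifold argument is needed. With these caveats the proof outline is sound.
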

Therefore we obtain the following convergence result.
\begin{prop}
 \label{convversmestat}
The deterministic measure-valued process $v_t$ converges for large time $t$ - in the weak topology - towards the singular measure 
concentrated on the equilibrium state $ \pi$ of the associated Lotka-Volterra dynamics:
$$
\lim_{t \rightarrow +\infty}  v_t =   \frac{R}{\alpha U} \,\delta_{(\pi_1,\pi_2)} ,
$$
where $\pi=(\pi_1,\pi_2)$ is defined in Lemma  \ref{proporcells}.
\end{prop}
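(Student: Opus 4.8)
The plan is to combine the two asymptotic results already established: the exponential convergence of the total mass $\langle v_t,1\rangle \to R/(\alpha U)$ from \eqref{stat}--\eqref{vitconvmasse}, and the convergence of the Lotka-Volterra flow $\varphi^{s,y}(t) \to \pi$ from Lemma \ref{LTBLV}. The natural vehicle is the mild representation \eqref{eq:vtgmild}, applied to an arbitrary bounded smooth test function $g(y)=g_1(y_1)g_2(y_2)$. I want to show that each of the two terms on the right-hand side of \eqref{eq:vtgmild} converges to a multiple of $g(\pi)$, and that the total coefficient adds up to $R/(\alpha U)$, which would establish the weak convergence $v_t \to \frac{R}{\alpha U}\,\delta_\pi$.

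First I would treat the first term $\int_{\rit_+^2} g\circ \varphi^{0,y}(t)\, v_0(dy)$. For each fixed $y$ in the support of $v_0$ with $y\neq(0,0)$, Lemma \ref{LTBLV} gives $\varphi^{0,y}(t)\to \pi$, so by continuity $g\circ\varphi^{0,y}(t)\to g(\pi)$; the boundary contribution from $y=(0,0)$ is an absorbing point of the flow (by Lemma \ref{ineglogistique}) contributing $g(0)$ times the $v_0$-mass at the origin, which is an issue I will need to control. Since $g$ is bounded and $v_0$ is a finite measure, dominated convergence yields that this term tends to $g(\pi)\,\langle v_0,1\rangle$ (modulo the origin), but crucially $\langle v_0,1\rangle$ need not equal $R/(\alpha U)$, so this term alone does not give the answer: it carries the \emph{initial} mass, and the mass discrepancy must be absorbed by the second integral.

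Next I would analyze the second term $\int_0^t \alpha_s \int_{\rit_+^2} g\circ\varphi^{s,y}(t)\,v_s(dy)\,ds$, where $\alpha_s = -\alpha U\langle v_s-v_\infty,1\rangle$. Here the key point is that as $t\to\infty$ the inner integrand $g\circ\varphi^{s,y}(t)$ is, for $s$ fixed and $t$ large, close to $g(\pi)$ by the flow convergence, using the flow property \eqref{flow} to write $\varphi^{s,y}(t)=\varphi^{0,z}(t-s)$ for $z=\varphi^{s,y}(0,\cdot)$-type reparametrization and invoking Lemma \ref{LTBLV} uniformly enough. Granting that replacement, the term behaves like $g(\pi)\int_0^t \alpha_s \langle v_s,1\rangle\,ds$. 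By \eqref{closemass} one recognizes $\alpha_s\langle v_s,1\rangle = \frac{\partial}{\partial t}\langle v_s,1\rangle$, so this integral telescopes to $g(\pi)\big(\langle v_t,1\rangle - \langle v_0,1\rangle\big) \to g(\pi)\big(R/(\alpha U) - \langle v_0,1\rangle\big)$. Adding the two contributions, the $-\langle v_0,1\rangle$ cancels the excess initial mass from the first term and the total is precisely $g(\pi)\cdot R/(\alpha U)$, as claimed.

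The main obstacle will be making the convergence $\varphi^{s,y}(t)\to\pi$ sufficiently uniform to justify interchanging limits under the double integral in $(s,y)$, since $\alpha_s$ decays only exponentially (via \eqref{vitconvmasse}) while the flow convergence is pointwise and may be slow or nonuniform near the degenerate initial data $y=(0,0)$ and near the separatrices of the Lotka-Volterra system. I would handle this by splitting the $s$-integral at a threshold $s\le t-A$ versus $s>t-A$: on the first region $t-s$ is large so $g\circ\varphi^{s,y}(t)$ is uniformly close to $g(\pi)$ on the relevant compact set of $y$-values (using the uniform moment bounds from Proposition \ref{YK}/the mass bound $\langle \overline{v,1}\rangle<\infty$ to neglect large $y$), while on the second region the integrand stays bounded by $\|g\|_\infty$ and the length $A$ combined with the exponential smallness of $\alpha_s$ for large $s$ keeps the contribution negligible. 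The origin issue is dispelled by noting that if $v_0$ charges $\{(0,0)\}$ the cell dynamics keeps that mass at $0$, but the driving growth $\alpha_s$ redistributes the total mass to the attractor, so a careful accounting of the mass at the absorbing state, together with the assumption $R>0$ that forces the surviving mass to the nondegenerate equilibrium $\pi\neq(0,0)$, closes the argument.
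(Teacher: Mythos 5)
Your proposal follows essentially the same route as the paper's proof: the mild representation \eqref{eq:vtgmild}, dominated convergence for the first term via Lemma \ref{proporcells}, exponential decay of $\alpha_s$ to control the tail of the time integral, and the mass identity $\langle v_0,1\rangle+\int_0^{t}\alpha_s\langle v_s,1\rangle\,ds=\langle v_{t},1\rangle$ to assemble the constant $R/(\alpha U)$. The only notable difference is that the paper splits the $s$-integral at a \emph{fixed} large $t_0$ rather than at $t-A$, which lets it pass to the limit on $[0,t_0]$ by dominated convergence alone and thereby sidesteps the uniform-in-$(s,y)$ flow convergence that you correctly identify as the main obstacle in your version of the split.
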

\begin{proof}
First,  the Lotka-Volterra flow $\varphi^{0,y }(t)$ converges for large $t$  towards $(\pi_1,\pi_2)$ given by
Lemma \ref{proporcells}. Since the test function $g$ is continuous and bounded and $v_0$ has a finite mass,  Lebesgue's dominated theorem implies that the first term in the right hand side of (\ref{eq:vtgmild}) converges:
$$
\lim_t \int_{\mathbb{R}^2_+}  g\circ \varphi^{0,y }(t) \, v_0(dy) = \int_{\mathbb{R}^2_+}  \lim_t g\circ \varphi^{0,y }(t) \, v_0(dy) = 
g(\pi_1,\pi_2) \, \langle v_0,1 \rangle .
$$
Secondly, as already seen in (\ref{vitconvmasse}), the mass $\langle v_t,1 \rangle $ of the total population converges exponentially fast to its equilibrium size, that is  $\alpha_t $ converges exponentially fast to 0: 
$$
\exists c>0, \exists t_0, \quad \forall s>t_0 \quad  \alpha_s \leq e^{-cs} .
$$
Therefore the second term in the right hand side of (\ref{eq:vtgmild}) can be disintegrated, for $t$ larger than $t_0$,  in the sum of two integrals over $[0,t_0]$ and $[t_0,t]$.
The control of the integral over $[t_0,t]$ is simple: 
$$
\left| \int^t_{t_0}  \alpha_s  \int_{\mathbb{R}^2_+} g\circ \varphi^{s,y }(t)  \,  v_s (dy) \, ds \right| \leq \langle \overline{v,1} \rangle \sup_y |g(y)|  \int^t_{t_0} e^{-cs} ds
$$
which is as small as one wants, when $t_0$ is large enough.\\
On the compact time interval $[0, t_0]$ the following convergence holds:
\begin{eqnarray*} 
\lim_t \int_0^{t_0}  \alpha_s  \int_{\mathbb{R}^2_+} g\circ \varphi^{s,y }(t)  \,  v_s (dy) \, ds &=& 
\int_0^{t_0}  \alpha_s  \int_{\mathbb{R}^2_+} \lim_t g\circ \varphi^{s,y }(t)  \,  v_s (dy) \, ds \\
&=& 
g(\pi_1,\pi_2)  \int_0^{t_0}  \alpha_s  \int_{\mathbb{R}^2_+} v_s (dy) \, ds .
\end{eqnarray*} 
Therefore for large time $t>t_0$, $\langle v_t,g \rangle $ is as close as one wants to
$$
g(\pi_1,\pi_2) \, \langle v_0,1 \rangle \, +  g(\pi_1,\pi_2)\,  \int_0^{t_0}  \alpha_s  \int_{\mathbb{R}^2_+} v_s (dy) \, ds   = g(\pi_1,\pi_2) \, \langle v_{t_0},1 \rangle .
$$
For $t_0$ large enough, this last quantity is close to $g(\pi_1,\pi_2) \, \langle v_\infty,1 \rangle  = \frac{R}{\alpha U} \,\langle \delta_{(\pi_1,\pi_2)}, g \rangle $.

\noindent This completes the proof of the weak convergence of the measures $v_t$.
\end{proof}

\begin{rema} 

The  stationary state is  a singular one 
even if the initial measure $v_0$ has a density: the absolute continuity property 
of the measure $v_t$ is conserved for any finite time $t$, but it is lost in infinite time.
\end{rema}

\noindent {\bf Convergence of the number of cells}\\
\noindent First we prove the boundedness of the number of cells of each type and the boundedness of its second moment. 
To this aim, we compare the multitype dynamics with a  dynamics where the different types do not interact, which corresponds to two independent monotype systems.

\begin{lem} \label{momentnbrecellulesmulti}
If $\langle v_0  , 1 \rangle + \langle v_0  , y_i^2 \rangle < + \infty$ 
then 
$\sup_{t \geq 0} \, \langle v_t  , y_i^2 \rangle < + \infty $. 
\end{lem}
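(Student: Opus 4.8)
The plan is to turn the (unclosed) moment equations for the cell numbers into closed logistic differential inequalities and then to invoke the comparison Lemma~\ref{ineglogistique}, treating the first moment $\langle v_t,y_i\rangle$ first and bootstrapping to the second moment. First I would specialize \eqref{eq:vtg} to the test functions $g(y)=y_i$ and $g(y)=y_i^2$ (legitimate for these unbounded $g$ by the usual truncation $y_i\wedge M$ together with the finiteness of the moments, as in the proof of Theorem~\ref{largepoplargecells}). With $c$ as in \eqref{def-c} one has ${\cal L}y_i=c_i(y)=y_i\big(r_i-\beta_i(\lambda_{ii}y_i+\lambda_{ij}y_j)\big)$ and ${\cal L}y_i^2=2y_i\,c_i(y)$, so that
\[
\frac{\partial}{\partial t}\langle v_t,y_i\rangle=(\alpha_t+r_i)\langle v_t,y_i\rangle-\beta_i\lambda_{ii}\langle v_t,y_i^2\rangle-\beta_i\lambda_{ij}\langle v_t,y_iy_j\rangle,
\]
\[
\frac{\partial}{\partial t}\langle v_t,y_i^2\rangle=(\alpha_t+2r_i)\langle v_t,y_i^2\rangle-2\beta_i\lambda_{ii}\langle v_t,y_i^3\rangle-2\beta_i\lambda_{ij}\langle v_t,y_i^2y_j\rangle.
\]
Since all the $\lambda$'s, the $\beta_i$ and the variables $y_1,y_2$ are nonnegative, the cross-type terms $\langle v_t,y_iy_j\rangle$ and $\langle v_t,y_i^2y_j\rangle$ are nonnegative and may be discarded; this is exactly the comparison with the two decoupled monotype systems. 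Using $\alpha_t\le\bar\alpha<+\infty$ this leaves $\frac{\partial}{\partial t}\langle v_t,y_i\rangle\le(\bar\alpha+r_i)\langle v_t,y_i\rangle-\beta_i\lambda_{ii}\langle v_t,y_i^2\rangle$.

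Second, I would manufacture the stabilizing quadratic term by Cauchy--Schwarz against the total mass. Since $\langle v_t,1\rangle\le\langle \overline{v,1}\rangle=:\bar m<+\infty$ (finite because $R>0$, cf.\ \eqref{stat}), Cauchy--Schwarz gives $\langle v_t,y_i\rangle^2\le\langle v_t,1\rangle\,\langle v_t,y_i^2\rangle\le\bar m\,\langle v_t,y_i^2\rangle$, hence $\langle v_t,y_i^2\rangle\ge\langle v_t,y_i\rangle^2/\bar m$. Writing $u_t:=\langle v_t,y_i\rangle$ the previous inequality becomes
\[
\frac{\partial}{\partial t}u_t\le(\bar\alpha+r_i)\,u_t-\frac{\beta_i\lambda_{ii}}{\bar m}\,u_t^2,
\]
which is of the form required by Lemma~\ref{ineglogistique} (with $a=\bar\alpha+r_i>0$ since $\bar\alpha\ge 0$ and $b=\beta_i\lambda_{ii}/\bar m>0$), so that $M_i:=\sup_{t\ge0}\langle v_t,y_i\rangle<+\infty$. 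Bootstrapping, Cauchy--Schwarz again yields $\langle v_t,y_i^2\rangle^2\le\langle v_t,y_i\rangle\,\langle v_t,y_i^3\rangle\le M_i\,\langle v_t,y_i^3\rangle$, i.e.\ $\langle v_t,y_i^3\rangle\ge\langle v_t,y_i^2\rangle^2/M_i$; with $w_t:=\langle v_t,y_i^2\rangle$ the second displayed moment equation gives
\[
\frac{\partial}{\partial t}w_t\le(\bar\alpha+2r_i)\,w_t-\frac{2\beta_i\lambda_{ii}}{M_i}\,w_t^2,
\]
and a second application of Lemma~\ref{ineglogistique} produces $\sup_{t\ge0}\langle v_t,y_i^2\rangle<+\infty$, which is the claim.

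The main obstacle is that the moment hierarchy is not closed: the equation for the $k$-th cell moment involves the $(k{+}1)$-th moment and the mixed moments in the two cell types. The resolution combines the two ingredients used above, namely discarding the nonnegative cross-type interaction terms (which replaces the coupled Lotka--Volterra competition by two independent logistic, i.e.\ monotype, comparisons) and converting the higher moment into the square of the lower one via Cauchy--Schwarz together with the a priori bounds $\langle v_t,1\rangle\le\bar m$ and $\langle v_t,y_i\rangle\le M_i$. This is where the hypotheses $\beta_i\lambda_{ii}>0$ and $R>0$ (guaranteeing $\langle \overline{v,1}\rangle<+\infty$) enter; the sign of $\alpha_t$ plays no role, since the logistic comparison of Lemma~\ref{ineglogistique} forces boundedness for any finite linear coefficient $a$.
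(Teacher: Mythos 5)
Your proof is correct and follows essentially the same route as the paper: the same moment identities with the cross-type terms discarded, the same Cauchy--Schwarz conversion of the higher moment into the square of the lower one against $\langle \overline{v,1}\rangle$ and then $\sup_t\langle v_t,y_i\rangle$, and the same two-stage application of Lemma~\ref{ineglogistique}. Your added remark about justifying the unbounded test functions $y_i$, $y_i^2$ by truncation is a point the paper passes over silently, but it does not change the argument.
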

\begin{proof}
Let us firstly prove that $\sup_{t \geq 0} \, \langle v_t  , y_i \rangle < + \infty $. \\
At time $t=0$ , $\langle v_0  , y_i \rangle \leq \langle v_0 , 1 \rangle+ \langle v_0  , y_i^2 \rangle < + \infty. $ Moreover, 
equation (\ref{cellmassgen}) reads now 
\begin{eqnarray} \label{ctcellmass} 
\frac{\partial }{\partial t}\langle v_t,y_i \rangle & =& 
\big( R-\alpha U\langle v_t,1\rangle \big)\langle v_t,y_i\rangle 
+ (b_{i}-d_i)\langle v_t,y_i\rangle
-\beta_{i}(\lambda_{ii}\langle v_t,y_i^2\rangle+\lambda_{ij}\langle v_t,y_iy_j\rangle )\nonumber\\
&\leq &( \alpha_t +b_{i}-d_i) \langle v_t,y_i\rangle - \beta_i\lambda_{ii} \langle v_t,y_i^2\rangle \nonumber\\
&\leq &( \alpha_t +b_{i}-d_i) \langle v_t,y_i\rangle - \frac{\beta_i\lambda_{ii}}{\langle v_t,1 \rangle } \langle  v_t,y_i \rangle^2 \nonumber\\
&\leq & ( \bar\alpha +r_i) \langle  v_t,y_i\rangle - \frac{\beta_i\lambda_{ii}}{\langle \overline{v,1} \rangle } \langle  v_t,y_i\rangle^2.
\end{eqnarray}
This inequality is a logistic one in the sense of 
Lemma \ref{ineglogistique}. Therefore one  deduces that the number of cells of type $i$ is  uniformly bounded in time:
$$
\sup_{t \geq 0} \langle v_t,y_i \rangle  < + \infty , \quad i =1,2 .
$$
By (\ref{eq:limit}) applied with $f\equiv 1, \, g_1(y_1)=y^2_1, \, g_2\equiv 1$, one obtains
\begin{eqnarray*} 
\frac{\partial }{\partial t}\langle v_t,y_1^2 \rangle & =&
\alpha_t \langle v_t,y_1^2 \rangle + 2r_1 \langle v_t,y_1^2 \rangle  
- 2 \beta_1 \Big( \lambda_{11} \langle v_t,y_1^3\rangle + \lambda_{12}\langle v_t,y_1^2 y_2\rangle \Big)\\
& \leq &
 ( \alpha_t + 2r_1) \langle v_t,y_1^2 \rangle  - 2 \beta_1 \lambda_{11} \langle v_t,y_1^3\rangle \\
& \leq &
 ( \alpha_t + 2r_1) \langle v_t,y_1^2 \rangle  - 2 \beta_1 \lambda_{11}\frac{1}{\langle v_t,y_1 \rangle } \langle v_t,y_1^2\rangle^2 \\
& \leq &
 ( \bar\alpha + 2r_1) \langle v_t,y_1^2 \rangle  - 2 \beta_1 \lambda_{11}\frac{1}{\langle \overline{v,y_1} \rangle } \langle v_t,y_1^2\rangle^2 
    \end{eqnarray*} 
since
$$
\langle v_t,y_1^2 \rangle^2 \leq 
\langle v_t,y_1^3\rangle  \langle v_t,y_1 \rangle .
$$
This inequality on  $\langle v_t,y_1^2 \rangle $ is of logistic type  as  (\ref{ctcellmass}). Lemma \ref{ineglogistique} implies 
$$
\langle \overline{v,y_1^2} \rangle:= \sup_{t \geq 0} \langle v_t,y_1^2 \rangle < + \infty.
$$
The same holds for $\langle \overline{v,y_2^2} \rangle$.
\end{proof}

\begin{prop} \label{statmeasurebitype}
If  $\langle v_0  , y_i \rangle < + \infty$  and
$\langle v_0  , y_i^2 \rangle < + \infty$, 
then 
the total number of cells of each type per individual $\frac{\langle v_t,y_i \rangle }{\langle v_t,1 \rangle} $ stabilizes for $t$ large:
$$
\lim_{t \rightarrow + \infty} \frac{\langle v_t,y_i \rangle }{\langle v_t,1 \rangle}  = 
\pi_i .
$$ 
\end{prop}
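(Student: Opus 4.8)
The plan is to reduce the statement to the separate convergence of $\langle v_t,1\rangle$ and $\langle v_t,y_i\rangle$. We already know from~\eqref{stat} that $\langle v_t,1\rangle \to R/(\alpha U)$, and Proposition~\ref{convversmestat} gives the weak convergence $v_t \to \frac{R}{\alpha U}\,\delta_{(\pi_1,\pi_2)}$. Hence it suffices to prove that
$$
\lim_{t\to+\infty}\langle v_t,y_i\rangle = \frac{R}{\alpha U}\,\pi_i ,
$$
since dividing by $\langle v_t,1\rangle$ then yields the announced limit $\pi_i$ for the ratio. The only genuine difficulty is that $y_i$ is an \emph{unbounded} test function, so the weak convergence of Proposition~\ref{convversmestat} does not apply to it directly.

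To circumvent this I would run a truncation argument backed by the uniform moment bound of Lemma~\ref{momentnbrecellulesmulti}. Fixing a cutoff level $M>\pi_i$, I decompose
$$
\langle v_t,y_i\rangle = \langle v_t, y_i\wedge M\rangle + \langle v_t,(y_i-M)^+\rangle .
$$
The map $y\mapsto y_i\wedge M$ is bounded and continuous, so Proposition~\ref{convversmestat} gives $\langle v_t,y_i\wedge M\rangle \to \frac{R}{\alpha U}\,(\pi_i\wedge M)=\frac{R}{\alpha U}\,\pi_i$. For the tail, a Markov-type estimate together with Lemma~\ref{momentnbrecellulesmulti} yields
$$
\langle v_t,(y_i-M)^+\rangle \le \langle v_t,\, y_i\,\indiq_{\{y_i>M\}}\rangle \le \frac{1}{M}\,\langle v_t,y_i^2\rangle \le \frac{C}{M},
$$
where $C:=\sup_{t\ge0}\langle v_t,y_i^2\rangle<+\infty$ is independent of $t$. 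Thus the tail is uniformly small in $t$.

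Combining these two estimates gives the conclusion by letting first $t\to\infty$ and then $M\to\infty$: given $\varepsilon>0$, one fixes $M$ so large that $M>\pi_i$ and $C/M<\varepsilon/2$, and then takes $t$ large enough that $\bigl|\langle v_t,y_i\wedge M\rangle-\frac{R}{\alpha U}\pi_i\bigr|<\varepsilon/2$, which forces $\bigl|\langle v_t,y_i\rangle-\frac{R}{\alpha U}\pi_i\bigr|<\varepsilon$. The same reasoning applies verbatim for both $i=1$ and $i=2$. I expect the main (and really the only) obstacle to be the unboundedness of $y_i$; it is precisely the uniform second-moment control $\sup_t\langle v_t,y_i^2\rangle<+\infty$ from Lemma~\ref{momentnbrecellulesmulti} that makes the family $(v_t)_t$ uniformly integrable against $y_i$ and so upgrades the weak convergence into convergence of these unbounded moments.
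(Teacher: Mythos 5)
Your proof is correct and follows essentially the same route as the paper: the paper also combines the weak convergence of Proposition \ref{convversmestat} with the uniform second-moment bound of Lemma \ref{momentnbrecellulesmulti} to conclude that $y_i$ is uniformly integrable under $(v_t)_t$, hence $\langle v_t,y_i\rangle \to \langle v_\infty,y_i\rangle = \frac{R}{\alpha U}\pi_i$, and then divides by $\langle v_t,1\rangle \to \frac{R}{\alpha U}$. Your truncation argument is simply the explicit form of that uniform-integrability step, which the paper leaves implicit.
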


\bigskip
\vspace{0.2cm}
\begin{proof}
Due to Proposition \ref{convversmestat}, the family of measures  $(v_t)_t$ converge weakly towards $v_\infty$. Moreover, 
by Lemma \ref{momentnbrecellulesmulti}, the second moments of  $v_t$ are uniformly bounded.
Therefore $y_i$ is uniformly integrable under the family of $(v_t)_t $ which leads to :
\begin{eqnarray*}
\lim_{t \rightarrow +\infty} \langle v_t,y_i \rangle & = & \langle \lim_{t \rightarrow +\infty} v_t,y_i \rangle = \langle v_\infty,y_i \rangle .
\end{eqnarray*}
\end{proof}

\medskip 

\noindent Let us  underline the decorrelation at infinity between cell and individual dynamics.

\subsubsection{Case with linear selection rate}
\label{sec:alphalineaire}

Suppose now that the selection rate $\alpha$ does not depend on the trait $x$ 
but is linear as function of the number of cells of each type~:
$$
\exists \alpha_1, \alpha_2 \in ]0,1[, \quad  \alpha (x,y_1,y_2) = \alpha_1 y_1 + \alpha_2 y_2 =:\alpha \cdot y.
$$
With other words the selection increases linearly when the number of cells increases.\\
The new main difficulty  comes from the fact that the  mass equation is no more closed~:
\be \label{eqmassalineaire}
\langle v_t,1\rangle  =\langle v_0,1\rangle  + \int_0^t \langle v_s,1\rangle \big( R -
U\langle v_s,\alpha \cdot y \rangle \big)\ ds,
\ee
which  has as (implicit) solution
\begin{equation}
\label{massalineaire}
\langle v_t,1 \rangle = \langle  v_0,1 \rangle e^{ - \int_0^t \big(  U\langle v_s,\alpha \cdot y \rangle -R \big) ds}.
\end{equation}

\noindent For this reason, unfortunately, we did not succeed in proving  the convergence in time of 
$\langle v_t,1\rangle $. 
Nevertheless, we can conjecture some limiting behavior of the process.\\

\noindent {\bf Conjecture} : 
The deterministic measure-valued process $v_t$ converges for large time $t$ towards the following stationary value
\begin{equation}\label{statmeasurealineaire}
\lim_{t \rightarrow +\infty}  v_t =   v_\infty := \frac{R}{U (  \alpha_1\pi_1 +  \alpha_2\pi_2 )} \, \delta_{(\pi_1,\pi_2)}  ,
\end{equation}
where $\pi=(\pi_1,\pi_2)$ is given in Lemma  \ref{proporcells}.\\
\medskip

\noindent  In this case too, the asymptotic proportions of the cells of different types per individual would become deterministic and independent.

\medskip

\noindent {\bf Some partial answers} 
\begin{itemize}
\item 
Equation (\ref{eqmassalineaire}) implies that any stationary measure $v_{\infty}$  should satisfy
$$
\langle v_{\infty},1\rangle \big( R- U \langle v_{\infty},\alpha \cdot y \rangle  \big)=0.
$$
Then, either $\langle v_{\infty},1\rangle = 0$, that means the extinction of the
individual population holds, or 
\be \label{statalineaire} 
\langle v_{\infty},\alpha \cdot y \rangle = \langle v_{\infty},\alpha_1 y_1 + \alpha_2 y_2 \rangle =\frac{B-D}{ U} =\frac{R}{ U} 
\ee
which describes a constraint between the limiting number of the different types of cells.
\item {\it Boundedness of the number of cells.}
\begin{lem} \label{nbrecellulesalineairemono}
$$\langle v_0  , y_i \rangle < + \infty \quad \Longrightarrow   \quad
\sup_{t\geq 0} \, \langle v_t,y_i \rangle < + \infty .$$
\end{lem}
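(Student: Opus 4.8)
The plan is to reduce the statement to a single scalar logistic differential inequality for the total cell number $c_i(t):=\langle v_t,y_i\rangle$ and then to invoke Lemma~\ref{ineglogistique}. First I would specialize the cell-mass equation \eqref{cellmassgen} to the present setting, where $B,D,b_i,d_i$ are constant, $p\equiv 0$, $U$ is a constant and $\alpha(x,y_1,y_2)=\alpha_1 y_1+\alpha_2 y_2$, so that $U*v_s(x,y_1,y_2)=U\langle v_s,1\rangle$. Writing $R=B-D$, $r_i=b_i-d_i$ and $m(t)=\langle v_t,1\rangle$, this yields
\[
\frac{d}{dt} c_i = (R+r_i)\,c_i \;-\; U\,m\,\langle v_t,(\alpha_1 y_1+\alpha_2 y_2)y_i\rangle \;-\; \beta_i\lambda_{ii}\langle v_t,y_i^2\rangle \;-\; \beta_i\lambda_{ij}\langle v_t,y_iy_j\rangle ,\qquad j\neq i .
\]

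The crucial observation, and the whole point of the linearly growing selection rate, is that the individual death term $-U\,m\,\langle v_t,(\alpha_1y_1+\alpha_2 y_2)y_i\rangle$ supplies exactly the quadratic self-damping needed to bound the \emph{total} number of cells, even though the mass $m(t)$ is a priori unbounded (from \eqref{eqmassalineaire} one only gets $\frac{d}{dt}m\le Rm$, hence $m(t)\le m(0)e^{Rt}$). Indeed, since the $\lambda$'s, the $\alpha$'s and the variables $y_1,y_2$ are nonnegative, I would keep only the diagonal contribution $\langle v_t,(\alpha_1y_1+\alpha_2y_2)y_i\rangle \ge \alpha_i\langle v_t,y_i^2\rangle$, and then apply Cauchy--Schwarz in the form $c_i^2=\langle v_t,y_i\rangle^2\le \langle v_t,1\rangle\,\langle v_t,y_i^2\rangle=m\,\langle v_t,y_i^2\rangle$, i.e. $\langle v_t,y_i^2\rangle\ge c_i^2/m$. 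The factor $m$ then cancels:
\[
-\,U\,m\,\langle v_t,(\alpha_1y_1+\alpha_2 y_2)y_i\rangle \;\le\; -\,U\,m\,\alpha_i\,\langle v_t,y_i^2\rangle \;\le\; -\,U\,\alpha_i\,c_i^2 .
\]

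Discarding the two remaining nonpositive cell-competition terms, I obtain the scalar logistic inequality $\frac{d}{dt} c_i \le (R+r_i)\,c_i - U\alpha_i\,c_i^2$, with $U\alpha_i>0$ since $\alpha_i\in\,]0,1[$. Because $r_i>0$ by Assumption~(H2) (and $R>0$ in the nondegenerate regime), the coefficient $a:=R+r_i$ is positive, so Lemma~\ref{ineglogistique} applies directly whenever $c_i(0)=\langle v_0,y_i\rangle>0$ and gives $\sup_{t\ge 0}\langle v_t,y_i\rangle<+\infty$; the degenerate cases $a\le 0$ and $c_i(0)=0$ are immediate, since then $c_i$ is non-increasing, respectively identically zero.

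I expect the only genuinely delicate point to be the one just exploited: resisting the temptation to bound $m$ first (as in the constant-selection case, where the closed logistic mass equation furnished $\langle\overline{v,1}\rangle<+\infty$, a tool unavailable here) and instead noticing that the mass appearing in the denominator of Cauchy--Schwarz is precisely the mass multiplying the selection term, so that the two compensate. This is exactly the self-regulation mechanism of a selection rate growing linearly in the cell load. The remaining technical issue is that the hypothesis only assumes $\langle v_0,y_i\rangle<+\infty$, so $\langle v_t,y_i^2\rangle$ need not be finite; this is handled by a standard truncation of the test function $y_i$ (replacing it by $y_i\wedge M$) together with a localization argument, exactly as in the proofs of Theorem~\ref{existence} and Lemma~\ref{momentnbrecellulesmulti}, after which the above inequality is recovered for the truncated quantities with constants independent of $M$ and one passes to the limit $M\to\infty$.
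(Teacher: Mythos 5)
Your proof is correct and follows essentially the same route as the paper's: write the evolution of $\langle v_t,y_i\rangle$ from \eqref{cellmassgen}, keep the diagonal part $\alpha_i\langle v_t,y_i^2\rangle$ of the selection term, use Cauchy--Schwarz $\langle v_t,y_i\rangle^2\le\langle v_t,1\rangle\langle v_t,y_i^2\rangle$ so that the mass factor cancels, drop the remaining nonpositive competition terms, and conclude by the logistic comparison Lemma~\ref{ineglogistique}. The paper compresses this into a single displayed inequality; your explicit identification of the Cauchy--Schwarz cancellation (and the remark on truncation when only the first moment is assumed finite) merely makes the same argument more detailed.
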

\begin{proof}
The number of cells of type $i$ satisfies 
\begin{eqnarray}
\frac{\partial}{\partial t} \langle v_t,y_i\rangle & = & 
\langle v_t,y_i \rangle  \big( R+ r_i\big) - U \langle v_t,y_i \alpha \cdot y \rangle \langle v_t,1\rangle 
-\beta_i \big( \lambda_{ii}\langle v_t,y_i^2\rangle + \lambda_{ij} \langle v_t,y_iy_j \rangle \big)\nonumber \\
&\leq &    ( R +r_i) \langle  v_t,y_i\rangle - \alpha_i U \langle v_t,y_i\rangle^2 
\end{eqnarray}
which reduces to the monotype case solved in Lemma \ref{ineglogistique}.
\end{proof}
\item {\it Identification of  a unique possible non trivial equilibrium.}\\

Applying Equation (\ref{eq:limit}) to  $f\equiv 1, \, g_i(y)=e^{-z_iy}$ and letting $t$ tend to infinity, we remark that 
the Laplace transform ${\bf L}_{\infty}(z)$ of any non vanishing stationary state $ v_{\infty}$ should satisfy 
\begin{align}
\label{edpalineaire}
& R {\bf L}_{\infty}(z) - U\langle v_{\infty},1\rangle \langle v_{\infty},(\alpha \cdot y)e^{-z\cdot y}\rangle \notag \\
&  - \sum_{i=1}^2 z_i \Big( r_{i}\langle v_{\infty},y_i e^{-z\cdot y}\rangle
-\beta_{i}\big(\lambda_{ii}\langle v_{\infty},y_i^2e^{-z\cdot y}\rangle+\lambda_{ij}\langle v_{\infty},y_iy_je^{-z\cdot y} \rangle\big)
\Big) = 0  \notag \\
&\Rightarrow 
R {\bf L}_{\infty} + 
U {\bf L}_{\infty}(0)(\alpha_1\frac{\partial {\bf L}_{\infty}}{\partial z_1}+ \alpha_2 \frac{\partial {\bf L}_{\infty}}{\partial z_2}) 
\notag \\  
& \qquad + \sum_{i=1}^2 \Big( z_i r_{i}\frac{\partial {\bf L}_{\infty}}{\partial z_i} +
z_i \beta_{i}\big(\lambda_{ii}\frac{\partial^2 {\bf L}_{\infty}}{\partial z_i^2}+
\lambda_{ij}\frac{\partial^2 {\bf L}_{\infty}}{\partial z_i\partial z_j}\big) \Big)= 0
\end{align}
with usual boundary conditions
$$
{\bf L}_{\infty}(0)= \langle v_{\infty},1 \rangle,  \quad \frac{\partial {\bf L}_{\infty}}{\partial z_i}(0) = -\langle v_{\infty},y_i\rangle .
$$
The unique non trivial solution of this p.d.e. is 
$$
{ \bf L}_{\infty}(z) = \langle v_{\infty},1 \rangle \, e^{- \tilde \pi \cdot z},
$$
where $\langle v_{\infty},1 \rangle = \frac{U}{\alpha\cdot \tilde\pi}$ and where $\tilde\pi_i$, the equilibrium proportion of cells of type $i$ in the global population,
has to be equal
to the equilibrium proportion given in 
Lemma  \ref{proporcells}: $\tilde\pi=\pi$.

\item  {\it Local stability of the non trivial equilibrium $v_\infty:= \frac{R}{U  \alpha\cdot \pi} \, \delta_{(\pi_1,\pi_2)} $.}\\

Although we cannot control the convergence of $\langle v_t,1\rangle $ to a positive number, 
we can analyze the stability of the nontrivial stationary state $v_\infty$ in the following sense. \\

{\it Stability of the mass around its positive stationary value $\frac{R}{U  \alpha\cdot \pi}$}\\
Let start with $v_0 = v_\infty + \varepsilon \delta_{(\zeta_1,\zeta_2)} $, where $\varepsilon$ is small and $(\zeta_1,\zeta_2) \in \rit_+^2$. 
From the mass equation ({\ref{eqmassalineaire}}) one obtains for $t$ small~:
\begin{eqnarray*} 
\frac{\partial}{\partial t} \langle v_t,1\rangle & = &\langle v_t,1\rangle \big( R -U \langle v_t,\alpha \cdot y \rangle \big)\\
& \simeq  & (\langle v_\infty,1\rangle + \varepsilon ) \big( R -U \langle v_\infty,\alpha \cdot y \rangle -\varepsilon U \,  \alpha \cdot \zeta \big)\\
& \simeq  &  -\varepsilon \, \frac{\alpha \cdot \zeta}{\alpha\cdot \pi}  + o(\varepsilon ).
\end{eqnarray*}
This quantity is negative for small $\varepsilon $, which implies the stability of the mass around its positive stationary value.\\

{\it Stability of the number of cells of each type around its limit value if $\max (r_1, r_2 )< R$}\\
We prove it only for the type $1$. From (\ref{cellmassgen}) we get an expansion in $\varepsilon$ of the variation of the global number of cells of type $1$ for small time~: 
\begin{eqnarray*}
\frac{\partial}{\partial t} \langle v_t,y_1\rangle & = & 
\langle v_t,y_1 \rangle  \big( R + r_1 \big) - U \langle v_t,y_1 \,\alpha \cdot y \rangle \langle v_t,1\rangle 
-\beta_1 \big( \lambda_{11}\langle v_t,y_1^2\rangle + \lambda_{12} \langle v_t,y_1y_2 \rangle \big)\\
& \simeq  &  \big( \langle v_\infty,y_1\rangle +\varepsilon \zeta_1 \big) \big( R + r_1 \big) 
- U \big( \langle v_\infty,y_1 \,\alpha \cdot y \rangle +\varepsilon  \zeta_1 \,\alpha \cdot \zeta \big) (\langle v_\infty,1\rangle +\varepsilon )\\
&& \quad -\beta_1 \big( \lambda_{11}\langle v_\infty,y_1^2\rangle +\lambda_{12} \langle v_\infty,y_1y_2 \rangle \big)
-\varepsilon \beta_1 ( \lambda_{11}\zeta_1^2 +\lambda_{12}\zeta_1\zeta_2 )  \\
& = &  \varepsilon  \Big( ( R + r_1) \zeta_1 
- U \langle v_\infty,y_1 \,\alpha \cdot y \rangle -U \zeta_1 \,\alpha \cdot \zeta \langle v_\infty,1\rangle 
-\beta_1 ( \lambda_{11}\zeta_1^2 +\lambda_{12}\zeta_1\zeta_2) \Big)+ o(\varepsilon )\\
& = &  \bar P_1(\zeta_1,\zeta_2)+ o(\varepsilon )
\end{eqnarray*}
where $\bar P_1(y_1,y_2) \leq P_1(y_1)$ for all $y_2>0$, with 
$$
P_1(X):= -\big(U \alpha_1 \langle v_\infty,1\rangle + \beta_1 \lambda_{11} \big) X^2
+ ( R + r_1) X -  U \alpha_1 \pi_1^2 \langle v_\infty,1\rangle .
$$
As second degree polynomial $P_1$ is negative if its discriminant is non positive. 
This condition is fulfilled when
$$
( R + r_1)^2 -4 U \alpha_1 \pi_1^2 \langle v_\infty,1\rangle \big(U \alpha_1 \langle v_\infty,1\rangle + \beta_1 \lambda_{11} \big) <0.
$$
It is true as soon as 
$$
(R + r_1)^2 -4 R^2  <0 \Leftrightarrow r_1 < R .
$$
Thus if $\max (r_1, r_2 )< R$, the number of cells of each type is stable around its limiting value.
\end{itemize}

\section{Diffusion and superprocess approximations}
\label{sec:limit3}

As in the above section we introduce the renormalization $\kappa= (K, K_1, K_2)$ both for individuals and for cells. Moreover we introduce an acceleration of individual births
and deaths with a factor $K^\eta$ (and a  mutation kernel  $M_K$ with amplitude of order  $K^{\eta/2}$ ) 
and  an acceleration of cell births
and deaths with a factor $K_1$ (resp. $K_2$).

\me We summarize below the assumptions we need on the model and which will be considered in all this section. 

\newpage
\me {\bf Assumptions (H3)}: \\
{\it 1) There exist continuous functions $\Gamma, B, D, \alpha$ on ${\cal X} \times \rit_+\times \rit_+$ 
such that
\be \label{scalingbis}
&& B_\kappa(x,n_1,n_2)= K^{\eta}\, \Gamma(x, \frac{n_1}{K_1} ,\frac{n_2}{K_2}) + B(x, \frac{n_1}{K_1} ,\frac{n_2}{K_2});\nonumber\\
&& D_\kappa(x,n_1,n_2)= K^{\eta}\, \Gamma(x, \frac{n_1}{K_1},\frac{n_2}{K_2}) + D(x, \frac{n_1}{K_1} ,\frac{n_2}{K_2});\nonumber\\
&&\alpha_\kappa(x, n_1,n_2)\equiv \alpha(x,\frac{n_1}{K_1},\frac{n_2}{K_2}). 
\ee
The function $\Gamma$ is assumed to be bounded and $B, D, \alpha$ satisfy Assumptions (H1).  \\
2) As before, the competition kernel satisfies 
$$ U_\kappa(x)={U(x)\over K},$$
where $U$ is a continuous function which satisfies Assumption (H1).  \\
3) The mutation law $ z\mapsto  M_K(x,z)$ is a centered  probability density  on ${\cal X} - x$.  Its covariance matrix is  $\frac{\sigma(x)^2}{K^\eta} Id$, where $\sigma $ is a continuous function. We also assume that $$\lim_{K\to \infty} K^\eta\, \sup_{x} \int |z|^3 M_{K}(x,z)dz = 0.$$
The parameter $p_\kappa$ stays unchanged: $p_\kappa(x)=p(x)$ .\\
4)
At the cell level, we introduce Lipschitz continuous functions $b_i, d_i$ on ${\cal X}$  and  a continuous function $ \gamma$ such that \be 
\label{scalingter}
&&b_{i,\kappa}(x)= K_i \,\gamma(x) + b_i(x) ;\nonumber\\
&& d_{i,\kappa}(x)= K_i\, \gamma(x) + d_i(x) , \quad i =1,2.
\ee
The interaction between the cells is  rescaled according on their type~:
\be \label{scalinglambda}
\lambda_{ij}^\kappa=\frac{\lambda_{ij} }{K_j}, \qquad i,j \in \{1,2\} .
\ee
The other parameters stay unchanged:
$\beta_{1,\kappa}=\beta_1$, $\beta_{2,\kappa}=\beta_2$.\\
\me
5) {\bf Ellipticity:} The functions $p$, $\sigma$, $\gamma$ and $\Gamma$ are lower bounded by positive constants 
 and $\sigma\,\sqrt{p\,\Gamma}$ and $\sqrt{\gamma}$ are Lipschitz continuous. \\
}

\noindent As in the section 3, we define the measure-valued Markov process $(Z^\kappa_t)_{t\geq 0}$ as
\begin{equation*}
 Z^\kappa_t= \frac{1}{K} \sum_{i=1}^{I_\kappa(t)} \delta_{(X^i_\kappa(t),\frac{N_{1,\kappa}^i(t)}{K_1},\frac{ N_{2,\kappa}^i(t)}{K_2})}.
\end{equation*}
We may summarize as in Proposition \ref{YK} the moment and martingale properties of $Z^\kappa$.

\newpage
\begin{prop}
\label{YK1}
Assume that for some $p\geq 3$,  $\mathbb{E}(\langle Z^\kappa_0,1\rangle^p + \langle Z^\kappa_0,y_1^2+y_2^2\rangle)<+\infty$. Then
\begin{description}
\item[\textmd{(1)}] 
For any $T>0$, $\mathbb{E}\left(\sup_{t\in[0,T]}\langle Z^\kappa_t,1\rangle^3 + \sup_{t\in[0,T]}\langle Z^\kappa_t,y_1^2+y_2^2\rangle \right)<+\infty.$
\item[\textmd{(2)}] 
For any measurable bounded functions $f, g_1, g_2$, the process
\begin{align}  \label{pbmZK}
& \bar M_t^{\kappa,fg}  = \left<Z^\kappa_t,fg_1g_2\right> - \left<Z^\kappa_0,fg_1g_2\right>  \notag \\
& - \int_0^t \int_{{\cal X} \times \rit^2_+}
   \bigg\{\Big( B(x,y_1,y_2) - D(x,y_1,y_2) - \alpha(x,y_1,y_2) \ U*Z^\kappa_s(x,y_1,y_2)  \Big) f(x)g_1(y_1)g_2(y_2)  \notag \\
& \qquad + p(x)\big( K^{\eta} \, \Gamma(x, y_1 ,y_2) + B(x,y_1,y_2) \big) \int \big( f(x+z)-f(x) \big) g_1(y_1)g_2(y_2)M_K(x,z)dz  \notag \\
& \qquad + f(x) \big( g_1(y_1+\frac{1}{K_1})-g_1(y_1)\big)g_2(y_2)\, \left(K_1 \gamma(x) + b_1(x)\right) \,  K_1 y_1\notag \\
& \qquad + f(x)g_1(y_1)\big(g_2(y_2+\frac{1}{K_2})-g_2(y_2)\big)\, \left(K_2 \gamma(x) + b_2(x)\right) \, K_2 y_2\notag \\
& \qquad + f(x)\big(g_1(y_1-\frac{1}{K_1})-g_1(y_1) \big) g_2(y_2)
 \left( K_1 \gamma(x) + d_1(x) +\beta_{1}(x)(y_1\lambda_{11}+y_2\lambda_{12})\right) K_1 y_1\notag \\
& \qquad + f(x)g_1(y_1)\big(g_2(y_1-\frac{1}{K_2})-g_2(y_2)\big)
  \left( K_2 \gamma(x) + d_2(x) +\beta_{2}(x)(y_1\lambda_{21}+y_2\lambda_{22})\right) K_2 y_2\bigg\} \notag \\
& \hskip 9cm Z_s^\kappa(dx,dy_1,dy_2)\ ds
\end{align}
is a  c\`adl\`ag square integrable $({\cal F}_t)_{t\geq 0}$-martingale 
with quadratic variation
\begin{align}
& \langle \bar M^{\kappa,fg}\rangle_t = 
{1\over K}\intot \int_{{\cal X} \times \rit^2_+}
\bigg\{\Big( 2K^\eta \,\Gamma(x,y_1,y_2) + B(x,y_1,y_2) \notag \\
&\hskip 3cm + D(x,y_1,y_2) + \alpha(x,y_1,y_2) \ U*Z^\kappa_s(x,y_1,y_2) \Big)
f^2(x)g_1^2(y_1)g_2^2(y_2) \notag \\
& + p(x)\big( K^{\eta} \,\Gamma(x, y_1 ,y_2) + B(x,y_1,y_2) \big) \int_{}\big( f(x+z)-f(x) \big)^2 M_K(x,z)dz \, g_1^2(y_1)g_2^2(y_2) \notag \\
& + f^2(x)\ \big( g_1(y_1+\frac{1}{K_1})-g_1(y_1) \big)^2\ g_2^2(y_2)\ (K_1 \gamma(x) + b_1(x)) \ K_1 y_1 \notag \\
& +     f^2(x)\ g_1^2(y_1)\ \big (g_2(y_2+\frac{1}{K_2})-g_2(y_2) \big)^2\ (K_2 \gamma(x) + b_2(x)) \ K_2 y_2 \notag \\
& + f^2(x)\ \big( g_1(y_1-\frac{1}{K_1})-g_1(y_1) \big)^2\ g_2^2(y_2)\
\Big(K_1 \gamma(x) + d_{1}(x)+\beta_{1}(x)(y_1\lambda_{11} +y_2\lambda_{12} )\Big) K_1 y_1 \notag \\
& + f^2(x)\ g_1^2(y_1)\ \big( g_2(y_2-\frac{1}{K_2})-g_2(y_2) \big)^2\
 \Big( K_2 \gamma(x) + d_{2}(x)+\beta_{2}(x)(y_1\lambda_{21} + y_2\lambda_{22} )\Big) K_2 y_2
 \bigg\} \notag \\
& \hskip 9.3cm Z^\kappa_s(dx,dy_1,dy_2)\ ds. \label{qv3}
\end{align}
\end{description}
\end{prop}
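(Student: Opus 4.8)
The plan is to recognise $(Z^\kappa_t)_t$ as the deterministic rescaling of a microscopic process of the type built in Section \ref{sec:ppp}, and then to transport the conclusions of Theorems \ref{existence} and \ref{martingales} through this rescaling, exactly as for Proposition \ref{YK}. Write $\nu^\kappa_t = \sum_{i} \delta_{(X^i_\kappa(t), N^i_{1,\kappa}(t), N^i_{2,\kappa}(t))}$ for the unweighted point process whose individual and cell rates are those prescribed by $(H3)$, so that $Z^\kappa_t$ is the image of $\nu^\kappa_t$ under the fixed map $\sum_i \delta_{(x^i,n_1^i,n_2^i)} \mapsto \frac1K \sum_i \delta_{(x^i, n_1^i/K_1, n_2^i/K_2)}$. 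For $\kappa=(K,K_1,K_2)$ fixed, the scaled rates in \eqref{scalingbis}--\eqref{scalinglambda} retain the growth structure underlying Assumption $(H1)$: the individual birth rate $B_\kappa$ is bounded, while the individual death/competition rate and the cell rates are bounded by $\kappa$-dependent constants times $(1+n)$ and $(1+n)(1+\langle\nu,1\rangle)$ respectively (the extra bounded terms $K^\eta\Gamma$ and $K_i\gamma$ being harmless for fixed $\kappa$). Hence Theorems \ref{existence} and \ref{martingales} apply to $\nu^\kappa$ with constants depending on $\kappa$; no $\kappa$-uniformity is claimed at this stage.

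For part (1) I would first translate the moment hypotheses: since $\langle Z^\kappa_0,1\rangle = \frac1K\langle\nu^\kappa_0,1\rangle$ and $\langle Z^\kappa_0, y_1^2+y_2^2\rangle = \frac{1}{KK_1^2}\langle\nu^\kappa_0,n_1^2\rangle + \frac{1}{KK_2^2}\langle\nu^\kappa_0,n_2^2\rangle$, finiteness of $\mathbb{E}(\langle Z^\kappa_0,1\rangle^p + \langle Z^\kappa_0,y_1^2+y_2^2\rangle)$ is equivalent, for fixed $\kappa$, to $\mathbb{E}(\langle\nu^\kappa_0,1\rangle^p) < \infty$ and $\mathbb{E}(\langle\nu^\kappa_0,n^2\rangle)<\infty$, using $n^2 \le 2(n_1^2+n_2^2)$. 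Theorem \ref{existence}(ii)--(iii) then gives $\mathbb{E}(\sup_{t\le T}\langle\nu^\kappa_t,1\rangle^p)<\infty$ and $\mathbb{E}(\sup_{t\le T}\langle\nu^\kappa_t,n_i^2\rangle)<\infty$, and dividing by the appropriate powers of $K,K_1,K_2$ recovers the two bounds in (1).

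For part (2) the key observation is that $\langle Z^\kappa_t, fg_1g_2\rangle = \frac1K \langle\nu^\kappa_t, f\,\tilde g_1\,\tilde g_2\rangle$, where $\tilde g_i(n_i):= g_i(n_i/K_i)$ is again bounded and measurable. Applying Theorem \ref{martingales}(ii) to $\nu^\kappa$ with the test functions $f,\tilde g_1,\tilde g_2$ shows that $\frac1K M^{f\tilde g}$ is a c\`adl\`ag square integrable martingale. To identify its drift I would rewrite $\frac1K \int_0^t L F_{f\tilde g}(\nu^\kappa_s)\,ds$, with $LF_{f\tilde g}$ given by \eqref{gen}, using the substitutions $n_i = K_i y_i$ and the explicit rates of $(H3)$: the discrete increments become $\tilde g_i(n_i\pm1)-\tilde g_i(n_i) = g_i(y_i\pm\frac1{K_i})-g_i(y_i)$, and the prefactor $\frac1K$ turns $\frac1K\langle\nu^\kappa_s,\cdot\rangle$ into $\langle Z^\kappa_s,\cdot\rangle$, producing exactly \eqref{pbmZK}. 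Only boundedness of $f,g_1,g_2$ is used here; the $C^1_b$ regularity needed to pass to the limit is postponed to the convergence theorem.

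The one point deserving care --- and the place where I would concentrate the bookkeeping --- is the prefactor $\frac1K$ in the quadratic variation \eqref{qv3}. Since $\langle \frac1K M^{f\tilde g}\rangle_t = \frac{1}{K^2}\langle M^{f\tilde g}\rangle_t$ and each term of \eqref{qv} has the form $\int_0^t\int(\cdot)^2(\mathrm{rate})\,\nu^\kappa_s\,ds$, I would split $\frac{1}{K^2} = \frac1K\cdot\frac1K$, absorb one factor into the change of variables $\frac1K\langle\nu^\kappa_s,\cdot\rangle = \langle Z^\kappa_s,\cdot\rangle$, and leave the remaining global $\frac1K$ in front; the squared increments then read $\big(g_i(y_i\pm\frac1{K_i})-g_i(y_i)\big)^2$ and the rates are the $(H3)$ rates expressed in the $y$-variables, giving \eqref{qv3}. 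This is the same accounting as in Proposition \ref{YK}: there is no genuine analytic obstacle, the entire content being the correct transport of the Section \ref{sec:ppp} results through the fixed linear rescaling together with careful tracking of the powers of $K$, $K_1$, $K_2$.
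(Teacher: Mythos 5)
Your proposal is correct and is precisely the argument the paper intends: Proposition \ref{YK1} is stated without an explicit proof, with the indication that it is obtained ``as in Proposition \ref{YK}'', i.e.\ by transporting Theorems \ref{existence} and \ref{martingales} through the fixed linear rescaling $\nu^\kappa\mapsto Z^\kappa$ for frozen $\kappa$, exactly as you do. One bookkeeping remark: carrying out your substitution in \eqref{qv} literally produces the mutation contribution $p\,(K^\eta\Gamma+B)\int f^2(x+z)M_K(x,z)dz\;g_1^2g_2^2$ together with the coefficient $(1-p)(K^\eta\Gamma+B)+(K^\eta\Gamma+D)+\alpha\,U*Z^\kappa_s$ in front of $f^2g_1^2g_2^2$, whereas the quadratic-variation formula displayed in the statement regroups these as $2K^\eta\Gamma+B+D+\alpha\,U*Z^\kappa_s$ plus $p\,(K^\eta\Gamma+B)\int\big(f(x+z)-f(x)\big)^2M_K(x,z)dz$; the two expressions differ by $2p\,(K^\eta\Gamma+B)\,f(x)\int\big(f(x)-f(x+z)\big)M_K(x,z)dz\;g_1^2g_2^2$, which does not vanish for merely bounded measurable $f$. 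This is a slip in the paper's display rather than a gap in your method (after the global $1/K$ prefactor the discrepancy is $O(1/K)$ for smooth $f$ and centered $M_K$, so it is immaterial for the subsequent limit theorems), but you should record the exact formula your computation yields instead of asserting that it reproduces the displayed one verbatim.
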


\noindent We assume that the sequence of initial conditions
$Z_0^\kappa$ converges in law to some finite measure
$\zeta_0$. Let us study the limiting behavior of the processes $Z^\kappa$
as $\kappa$ tends to infinity. It depends on the value of $\eta$ and 
leads to two different convergence results.\\
As before we denote by  $r_{i}$ the rate $ b_{i}-d_{i}$. 

\newpage
\begin{thm}
\label{largepopcellsbirthratedet}
Assume $(H3)$ and $\eta \in ]0,1[$; suppose that the initial conditions $Z^\kappa_0 \in M_F({\cal X} \times \rit_+^2)$ satisfies 
$\ \sup_\kappa E(\langle Z^\kappa_0,1\rangle^3)<+\infty$.
If  further, the sequence of measures $(Z^\kappa_0)_\kappa$  converges in law 
to a finite deterministic measure
$w_0$, then the sequence of processes $(Z^\kappa_t)_{0\leq t\leq T}$ converges in
law in the Skorohod space $\dit([0,T],M_F({\cal X} \times \rit_+^2))$, as $\kappa$ goes
to infinity, to the unique (deterministic) flow of functions $w \in C([0,T], \mathbb{L}^1({\cal X} \times \mathbb{R}_{+}^2))$  weak solution of
\be
\label{reac-dif}
   {\partial \over \partial t} w_{t} = \Big(B - D - \alpha\ U*w_t \Big)w_{t} 
+ \bigtriangleup_{x}\left(p\, \sigma^2\, \Gamma\, w_{t}\right)  +  \,\bigtriangleup_{y} (\gamma \,w_{t}) 
  -  \triangledown_{y} \cdot (c w_t).
   \ee
\end{thm}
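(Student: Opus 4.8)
\noindent The plan is to run the same compactness--uniqueness scheme as in the proof of Theorem~\ref{largepoplargecells}, the genuinely new feature being that the two accelerations, $K^\eta$ on individual births/deaths and $K_i$ on cell births/deaths, now produce \emph{second--order} (diffusion) terms in the limit. Consequently the limiting test functions must be taken of class $C^3_b$ (to control the Taylor remainders), the resulting equation being tested against $C^2_b$ functions by density, and the uniqueness step must be carried at the level of a mild equation attached to a diffusion semigroup rather than to the Lotka--Volterra flow $\varphi$. First I would establish tightness of the laws of $(Z^\kappa_\cdot)$ in $\dit([0,T],M_F({\cal X}\times\rit_+^2))$: from the uniform moment bounds of Proposition~\ref{YK1}(1) (the second--moment control in $y$ preventing mass from escaping to infinity, hence yielding compact containment) and an Aldous--Rebolledo criterion applied to $\langle Z^\kappa_\cdot,fg_1g_2\rangle$ via \eqref{pbmZK}--\eqref{qv3}, uniform tightness follows as in \cite{FM04}, \cite{Ro86} or \cite{ChF06}, and Prokhorov's theorem gives relative compactness.

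\me The limit points are deterministic. Indeed every term of \eqref{qv3} carries the prefactor $1/K$. Since $\eta\in]0,1[$, the birth/death contribution is of order $K^{\eta-1}\to 0$; the mutation contribution is bounded by $\frac1K\big(K^\eta\Gamma+B\big)\int(f(x+z)-f(x))^2M_K\,dz=O(1/K)$, the integral being $O(\sigma^2/K^\eta)$; and each cell contribution, $\frac1K\big(g_i(y_i\pm\tfrac1{K_i})-g_i(y_i)\big)^2=O(1/K_i^2)$ against a rate of order $K_i^2 y_i$, is again $O(1/K)$. Hence $\mathbb{E}(\langle\bar M^{\kappa,fg}\rangle_T)\to 0$, so $\bar M^{\kappa,fg}\to 0$ in $L^2$ and every accumulation point solves a deterministic equation.

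\me The heart of the proof is the identification of the drift limit in \eqref{pbmZK}, which rests on two cancellations. (i) In the mutation term $p(x)\big(K^\eta\Gamma+B\big)\int\big(f(x+z)-f(x)\big)M_K\,dz$, the centering of $M_K$ kills the first--order Taylor term, so the second--order term yields the $x$--Laplacian $p\,\Gamma\,\sigma^2\,\bigtriangleup_x f$ (with the constant fixed by the normalization of the covariance $\tfrac{\sigma^2}{K^\eta}\mathrm{Id}$ in $(H3)\,3)$): the diverging factor $K^\eta$ is exactly compensated by $1/K^\eta$, while the $B$--part and the remainder $O\big(K^\eta\int|z|^3M_K\big)$ vanish by $(H3)\,3)$. (ii) In the cell terms, expanding $g_i(y_i\pm\tfrac1{K_i})-g_i(y_i)=\pm\tfrac{g_i'}{K_i}+\tfrac{g_i''}{2K_i^2}+O(1/K_i^3)$ and multiplying by the rates $(K_i\gamma+b_i)K_iy_i$ and $(K_i\gamma+d_i+\beta_i(\cdots))K_iy_i$ makes the diverging drifts $\pm K_i\gamma y_i g_i'$ cancel between birth and death, leaving the transport coefficient $c_i(x,y)g_i'$ with $c$ as in \eqref{def-c} and the diffusion coefficient $\gamma(x)\,y_i\,g_i''$; the $O(1/K_i)$ remainders vanish using $g_i\in C^3_b$. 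With the reaction term $(B-D-\alpha\,U*Z^\kappa_s)fg_1g_2$, whose nonlinear competition factor $U*Z^\kappa_s$ is passed to the limit by the weak convergence of $Z^\kappa$ and the cutoff device of Theorem~\ref{largepoplargecells}, this proves that every limiting flow $w$ satisfies, for all $f,g_1,g_2\in C^2_b$, the weak form of \eqref{reac-dif}:
\ben
\langle w_t,fg_1g_2\rangle&=&\langle w_0,fg_1g_2\rangle+\int_0^t\langle w_s,(B-D-\alpha\,U*w_s)fg_1g_2\rangle\,ds\\
&&+\int_0^t\Big\langle w_s,\,p\,\Gamma\,\sigma^2\,\bigtriangleup_x f\;g_1g_2+\gamma\big(y_1g_1''g_2+y_2g_1g_2''\big)f+\big(c_1g_1'g_2+c_2g_1g_2'\big)f\Big\rangle\,ds.
\een

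\me It remains to prove uniqueness of this equation in $C([0,T],M_F)$, which identifies the limit and upgrades compactness to convergence; this is where I expect the main obstacle. The idea is to pass to a mild formulation: let $(S_t)$ be the semigroup generated by the linear diffusion operator of \eqref{reac-dif}, namely $\bigtriangleup_x(p\sigma^2\Gamma\,\cdot)$ together with $\partial_{y_1}^2(\gamma y_1\,\cdot)+\partial_{y_2}^2(\gamma y_2\,\cdot)-\triangledown_y\cdot(c\,\cdot)$; the ellipticity $(H3)\,5)$ makes the $x$--part uniformly elliptic and keeps the $y$--part non--degenerate in the interior $\{y_i>0\}$, so that $S_t$ is instantaneously regularizing and $w_t$ acquires a density for $t>0$, giving $w\in C([0,T],\mathbb{L}^1)$ as claimed. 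Duhamel's formula then rewrites \eqref{reac-dif} as an integral equation for this density in which the reaction and competition terms are bounded, locally Lipschitz perturbations, and a Gronwall estimate on the $\mathbb{L}^1$--norm of the difference of two solutions (as in Proposition~\ref{prop:density}, with the transport flow replaced by $S_t$) forces uniqueness. The delicate points will be the $\mathbb{L}^1$ setting imposed by the genuine diffusion, the \emph{degeneracy of the cell diffusion} $\gamma(x)y_i$ at the boundary $\{y_i=0\}$ (a Feller--type degeneracy requiring care in the construction of $S_t$ and the regularity of the density), the unboundedness in $y$ of the transport coefficient $c$ (absorbed through the a priori moment bounds of Proposition~\ref{YK1}), and the control of the nonlocal term $\alpha\,U*w$, for which the global mass bound together with $(H3)$ suffices.
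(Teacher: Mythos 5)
Your proposal follows the same compactness--uniqueness scheme as the paper's proof: tightness from the moment bounds of Proposition~\ref{YK1}, vanishing of the quadratic variation at rate $K^{\eta-1}$ forcing deterministic limit points, Taylor identification of the drift (the centered mutation kernel producing $p\,\sigma^2\,\Gamma\,\bigtriangleup_x f$, the divergent $K_i\gamma\,y_i\,g_i'$ terms cancelling between cell births and deaths), and a mild formulation attached to a diffusion semigroup for uniqueness. One substantive point of divergence deserves attention. Your expansion of the cell terms yields the diffusion coefficient $\gamma(x)\,y_i\,g_i''(y_i)$, i.e. a Feller-type operator $\sum_i\partial_{y_i}^2(\gamma\,y_i\,\cdot)$ degenerating on $\{y_i=0\}$ --- and this is indeed what the rates $(K_i\gamma+b_i)K_iy_i$ and $(K_i\gamma+d_i+\cdots)K_iy_i$ produce. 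The paper's limit equation \eqref{reac-dif}, its weak form \eqref{eq:diffusion}, and the operator ${\cal A}=p\sigma^2\Gamma\bigtriangleup_x+\gamma\bigtriangleup_y+c\cdot\triangledown_y$ used in its uniqueness step all carry $\gamma\bigtriangleup_y$ \emph{without} the factor $y_i$; the ellipticity hypothesis $(H3)\,5)$ ($\gamma$ bounded below) is then invoked to get a smooth transition density $\psi^{x,y}(t,\cdot,\cdot)$, write the mild equation \eqref{eq:milddiffusion}, and conclude by Gronwall, with the $\mathbb{L}^1$ regularity of $w_t$ coming from Fubini. With the coefficient your computation actually produces, that uniform-ellipticity shortcut is unavailable, and the construction and regularity of the semigroup near $\{y_i=0\}$ --- which you rightly single out as the delicate point --- genuinely requires an argument (Feller/Wright--Fisher-type estimates, or a duality or Laplace-functional argument in the $y$ variables). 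So your identification step is the more faithful of the two, but your uniqueness step, like the paper's, remains a sketch exactly where the real difficulty sits; everything else (time-dependent test functions, Duhamel representation, Gronwall on the difference of two solutions, absolute continuity of $w_t$ for $t>0$) matches the paper's argument.
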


\begin{rema}   One obtains the existence and uniqueness of function-valued solutions of \eqref{reac-dif} even if the initial measure $w_0$ is a degenerate  one without density.
\end{rema}

\begin{proof}
The proof follows the same steps as the one of Theorem \ref{largepoplargecells} except that the  mutation term will lead  to a Laplacian term in $f$ since the mutation kernel is centered and the mutation steps converge to $0$ in the appropriate scale. We first obtain the tightness of  the sequence $(Z^\kappa)$ and the fact that each subsequence converges to a measure-valued flow  $w \in C([0,T], M_{F}({\cal X} \times \mathbb{R}_{+}^2))$ satisfying for bounded $C^2$-functions $f, g_{1}, g_{2}$, 
\begin{align}
\label{eq:diffusion}
& \langle w_t,fg_1g_2\rangle  =\langle w_0,fg_1g_2\rangle  +\int_0^t\int_{{\cal  X}\times \rit_+^2} \bigg\{\Big( B(x,y_1,y_2)\notag \\
& \hskip 2cm - D(x,y_1,y_2)- \alpha(x,y_1,y_2)\ U*w_s(x,y_1,y_2)\Big)
  f(x) g_1(y_1)g_2(y_2) \notag \\
& \qquad + p(x)\sigma^2(x) \Gamma(x,y_1,y_2)  \bigtriangleup f(x)  \, g_1(y_1)g_2(y_2) \notag \\
& \qquad + f(x) \triangledown g_1(y_1) g_2(y_2)
\Big( r_{1}(x) - \beta_{1}(x)(y_1\lambda_{11}+y_2\lambda_{12})\Big)\ y_1  \notag \\
& \qquad + f(x)  g_1(y_1) \triangledown g_2(y_2) \Big( r_{2}(x) - \beta_{2}(x)(y_1\lambda_{21}+y_2\lambda_{22})\Big)\ y_2 \notag \\
& \qquad +  f(x)  \gamma(x)\left( \bigtriangleup g_1(y_1) g_2(y_2) 
 +g_1(y_1) \bigtriangleup g_2(y_2) \right) \bigg\}  w_s(dx,dy_1,y_2)\, ds.
\end{align}
We can also apply $w_t$ to smooth time-dependent test functions $h(t,x,y)$  defined on $\mathbb{R}_{+} \times {\cal X}\times \mathbb{R}_{+}^2$
. That will add a term of the form $\langle \partial_{s}h,w_{s}\rangle$ in \eqref{eq:diffusion}.  

\noindent Let us now sketch the uniqueness argument.
Thanks to the Lipschitz continuity and ellipticity assumption $(H3)$,  the semigroup associated with the infinitesimal generator
   $$
{\cal A}:= p\, \sigma^2\, \Gamma \bigtriangleup_{x}  +  \,\gamma \bigtriangleup_{y} 
  +  \, c \cdot \triangledown_{y}
$$  
admits at each time $t>0$ a smooth  density denoted by $\psi^{x,y}(t,\cdot,\cdot) $ on ${\cal  X} \times \mathbb{R}_+^2$. That is, for any bounded continuous function $G$ on  ${\cal X}\times \mathbb{R}_{+}^2$, the function $$
\check G(t,x,y)= \int  \psi^{x,y}(t,x',y') G(x',y')dx'dy'
$$ 
 satisfies 
$$
   {\partial \over \partial t}\check G = {\cal A} \check G; \quad  \check G(0,\cdot,\cdot) = G.
   $$

Thus  \eqref{eq:diffusion}  applied to the test function  $(s,x,y) \mapsto \check G(t-s,x,y)$ leads to the mild equation: for any continuous and bounded function $G$,
 \begin{align}
\label{eq:milddiffusion}
\langle w_t,G\rangle & =\langle w_0,\check G(t,\cdot)\rangle 
 +\int_0^t \langle w_s, (B-D-\alpha U*w_s) 
  \check G(t-s,\cdot )\rangle
     \  ds\notag\\
& =\int_{{\cal X}\times \rit_+^2}  G(x', y')  \int_{{\cal X}\times \rit_+^2}  \psi^{x,y}(t,x',y') w_0(dx,dy) \ dx'dy'  \notag \\
& + \int G(x', y') \int_0^t 
  \int \big(B-D-\alpha U*w_s\big)(x,y)\, \psi^{x,y}(t-s,x',y')  
     w_s(dx,dy)\  ds \ dx'dy'.
\end{align}  
It is simple to deduce from this representation the uniqueness of the measure-valued solutions of (\ref{eq:diffusion}). Moreover, by Fubini's theorem and $(H3)$ and since $\sup_{t\leq T} \langle w_{t},1\rangle <+\infty$, one observes that 
$$\langle w_t,G\rangle  =\int_{{\cal X}\times \rit_+^2}  G(x', y') H_{t}(x',y') dx' dy',$$
with $H\in \mathbb{L}^\infty([0,T], \mathbb{L}^1({\cal X}\times \rit_+^2))$. Thus for any $t\leq T$, the finite measure $w_{t}$ is absolutely continuous with respect to the Lebesgue's measure and the solution of (\ref{eq:diffusion}) is indeed a function for any positive time.
      \end{proof}      
    
\bigskip

\noindent If $\eta=1$ the limiting process of $Z^\kappa$ is no more deterministic but is a random superprocess with values in $ C([0,T], M_{F}({\cal X} \times \mathbb{R}_{+}^2))$.
 
\begin{thm}
\label{largepopcellsbirthrate}
Assume $(H3)$ and $\eta =1$. Assume moreover that the initial
conditions $Z^\kappa_0 \in M_F({\cal X} \times \rit_+^2)$ satisfy $\ \sup_\kappa E(\langle Z^\kappa_0,1\rangle^3)<+\infty$. If they converge in law 
as $\kappa$ tends to infinity to a finite deterministic measure
$\zeta_0$, then the sequence of processes $(Z^\kappa_t)_{0\leq t\leq T}$ converges in
law in the Skorohod space $\dit([0,T],M_F({\cal X} \times \rit_+^2))$, as $\kappa$ goes
to infinity, to the continuous measure-valued semimartingale $\zeta \in
C([0,T],M_F({\cal X} \times \rit_+^2))$ satisfying  for any bounded smooth functions $f, g_1,g_2$:
\begin{align}
\label{eq:limit3}
& M_t^{fg}:= \langle \zeta_t,fg_1g_2\rangle  - \langle \zeta_0,fg_1g_2\rangle \notag \\
& - \int_0^t\int_{{\cal X} \times \rit_+^2} \bigg\{\Big( B(x,y_1,y_2) - D(x,y_1,y_2)
- \alpha(x,y_1,y_2) \ U*\zeta_s(x,y_1,y_2) \Big)    f(x) g_1(y_1)g_2(y_2) \notag \\
& \qquad + p(x)\sigma^2(x) \Gamma(x,y_1,y_2) \bigtriangleup f(x) g_1(y_1)g_2(y_2)  \notag \\
& \qquad + f(x) \triangledown g_1(y_1) g_2(y_2)
\Big( r_{1}(x)- \beta_{1}(x)(y_1\lambda_{11}+y_2\lambda_{12})\Big)\ y_1  \notag \\
& \qquad + f(x)  g_1(y_1) \triangledown g_2(y_2) \Big( r_{2}(x) - \beta_{2}(x)(y_1\lambda_{21}+y_2\lambda_{22})\Big)\ y_2 \notag \\
& \qquad + f(x)\gamma(x)
\left( \bigtriangleup g_1(y_1) g_2(y_2)   +  g_1(y_1) \bigtriangleup g_2(y_2)\right) \bigg\} \zeta_s(dx,dy_1,dy_2)\  ds
\end{align}
is a  continuous square integrable $({\cal F}_t)_{t\geq 0}$-martingale 
with quadratic variation
\be
\label{qv3}
\langle M^{fg}\rangle_t = \intot \int_{ {\cal X} \times \rit^2_+}
2 \, \Gamma(x,y_1,y_2) f^2(x)g_1^2(y_1)g_2^2(y_2)
\zeta_s(dx,dy_1,dy_2)\ ds.\ee
\end{thm}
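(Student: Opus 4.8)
The plan is to run the same compactness--uniqueness scheme already used for Theorem~\ref{largepoplargecells} (see \cite{Ethier-Kurtz}, \cite{FM04}), the essential new feature being that for $\eta=1$ the accelerated individual birth--death fluctuations \emph{survive} in the limit, so that $\zeta$ is genuinely random and the limit is a superprocess rather than a deterministic flow. I would start from the semimartingale decomposition of $\langle Z^\kappa_t,fg_1g_2\rangle$ furnished by Proposition~\ref{YK1}, that is the drift in \eqref{pbmZK} together with its bracket. The uniform moment estimates of Proposition~\ref{YK1}(1) feed an Aldous--Rebolledo argument applied to the finite-variation part of \eqref{pbmZK} and to its quadratic variation, giving tightness of the laws of $(\langle Z^\kappa_\cdot,\phi\rangle)$ for $\phi$ in a measure-determining class. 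The compact containment needed to lift this to tightness of $(Z^\kappa)$ in $\dit([0,T],M_F(\mathcal{X}\times\mathbb{R}_+^2))$ (as in \cite{Ro86}) is supplied by the control of $\langle Z^\kappa_t,1\rangle$ together with $\langle Z^\kappa_t,y_1^2+y_2^2\rangle$, which is exactly what tames the noncompact cell directions $y_1,y_2$.

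The identification of any limit point is the concrete computational heart and rests on three Taylor expansions in \eqref{pbmZK}. First, the mutation term: since $M_K$ is centered with covariance $\tfrac{\sigma^2}{K^\eta}\,\mathrm{Id}$, the first-order term vanishes and the second-order one, multiplied by the accelerating factor $K^\eta p\,\Gamma$, converges to the $x$-Laplacian $p\,\sigma^2\,\Gamma\,\Delta f$, the remainder going to $0$ by the third-moment hypothesis of $(H3)$. Second, the $x$-birth/death block: the diverging $K^\eta\Gamma$ parts of $B_\kappa$ and $D_\kappa$ cancel in the birth-minus-death difference, leaving the finite net growth rate $B-D-\alpha\,U*Z^\kappa_s$. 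Third, the cell block: writing $g_i(y_i\pm\frac1{K_i})-g_i(y_i)=\pm\frac1{K_i}g_i'+\frac1{2K_i^2}g_i''+o(K_i^{-2})$, the accelerated $K_i\gamma$ contributions of cell birth and cell death cancel at first order and add at second order to produce the cell-diffusion operator in $y$, while the non-accelerated parts carrying $b_i,d_i,\beta_i$ assemble into the transport term $c\cdot\nabla_y$ with $c$ as in \eqref{def-c}. For the bracket one checks, when $\eta=1$, that only $\frac1K\cdot 2K^\eta\Gamma=2\Gamma$ persists, the mutation and cell contributions to \eqref{qv3} being of order $1/K$ and hence vanishing; continuity of $\zeta$ follows since all jumps are of size $O(1/K)$ or $O(1/K_i)$. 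Thus every limit point solves the martingale problem \eqref{eq:limit3} with bracket \eqref{qv3}.

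It remains to prove \emph{uniqueness} of that martingale problem, and this is where I expect the real difficulty. One recognises $\zeta$ as the superprocess built over the spatial diffusion with the generator $\mathcal{A}$ already appearing in the proof of Theorem~\ref{largepopcellsbirthratedet}, now endowed with quadratic branching of rate $2\Gamma$ and an \emph{interactive} mass-growth rate $B-D-\alpha\,U*\zeta$. The natural characterisation is through the Laplace functional, i.e. $\mathbb{E}\big[\exp(-\langle\zeta_t,G\rangle)\big]=\exp(-\langle\zeta_0,V_t\rangle)$ with $V$ solving the log-Laplace (cumulant) equation $\partial_t V=\mathcal{A}V+(B-D)V-\Gamma V^2-(\alpha\,U*\zeta_s)V$. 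The obstruction is precisely the nonlocal competition term $\alpha\,U*\zeta$: it couples the cumulant equation to the unknown law and destroys the autonomous self-duality that gives uniqueness for classical branching superprocesses. I would close the argument by treating the interaction field $s\mapsto \alpha\,U*\zeta_s$ as a bounded adapted coefficient: for a \emph{given} such field the resulting linear time-inhomogeneous superprocess martingale problem has a unique law (standard, via its linear cumulant equation), and the map sending a field to the corresponding field of the solution is, thanks to $U$ bounded continuous and the uniform moment bounds, a contraction on short time intervals, a Gronwall estimate then propagating uniqueness to all of $[0,T]$. Equivalently, one may compare two solutions directly through the mild form of \eqref{eq:limit3} and a finite-variation Gronwall estimate in the spirit of the uniqueness part of Theorem~\ref{largepoplargecells}, the extra branching contribution being controlled because its bracket depends on $\zeta$ only through the bounded factor $2\Gamma$. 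Either way, the interaction-driven loss of duality is the single step that goes beyond the deterministic case.
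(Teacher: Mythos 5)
Your tightness and identification steps coincide with what the paper does (it simply adapts \cite{FM04}): the Aldous--Rebolledo criterion fed by the moment bounds of Proposition \ref{YK1}, the cancellation of the diverging $K^\eta\Gamma$ parts in $B_\kappa-D_\kappa$, the second-order Taylor expansions of the mutation and cell terms, and the observation that only $\frac1K\cdot 2K^\eta\Gamma=2\Gamma$ survives in the bracket when $\eta=1$. Up to that point the proposal is sound and matches the paper.

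The gap is in the uniqueness of the martingale problem \eqref{eq:limit3} --- precisely the step you identify as the real difficulty but do not actually close. Your first route (freeze the interaction field $s\mapsto \alpha\,U*\zeta_s$, solve the linear time-inhomogeneous problem, and show that the field-to-solution map is a short-time contraction) is circular as stated: the field is a functional of the realization of the solution itself, not of its law, so ``giving'' the field amounts to conditioning on part of the unknown, and the claimed contraction would require a quantitative estimate on how the \emph{law} of a superprocess depends on a random adapted drift coefficient; you specify no metric in which this map is Lipschitz, and producing one is essentially equivalent to the Girsanov estimate you are trying to bypass. Your second route (a finite-variation Gronwall estimate on the mild form ``in the spirit of'' Theorem \ref{largepoplargecells}) fails for a genuinely random limit: two solutions of a martingale problem live a priori on different probability spaces, and even on a common space the martingale parts of $\langle\zeta_t-\bar\zeta_t,G\rangle$ do not cancel, so $\|\zeta_t-\bar\zeta_t\|_{FV}$ satisfies no closed deterministic inequality. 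The paper resolves this differently: since the interaction enters only the drift (the branching rate $2\Gamma$ is interaction-free), the Dawson--Girsanov transform for measure-valued processes (Theorem~2.3 of \cite{EP94}, applicable thanks to the ellipticity of $\Gamma$ and the displayed square-integrability of the interactive drift) shows that the law of any solution is absolutely continuous with respect to that of the solution with $B=D=\alpha=0$, with an explicit density; uniqueness is thereby reduced to the non-interacting branching case, which is settled by Fitzsimmons' results \cite{F92} through the log-Laplace equation extended to time-dependent test functions. To keep your fixed-point outline you would have to supply precisely this absolute-continuity control, at which point you would have rederived the paper's argument.
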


\begin{proof}   The convergence is obtained by a compactness-uniqueness  argument. 
The uniform tightness of the laws and the identification of the limiting values can be adapted 
from  \cite{FM04} with some careful moment estimates and an additional drift term  as in the proof of Theorem \ref{largepoplargecells}.

\noindent The uniqueness can be deduced from the one with $B=D= \alpha=0$ by using the Dawson-Girsanov transform for measure-valued processes
 (cf. Theorem~2.3 in \cite{EP94}), as soon as  the ellipticity assumption for $\Gamma$  is satisfied.  Indeed,  
\begin{equation*}
\mathbb{E}\left( \int_0^t\int_{{\cal X} \times \rit_+^2} 
 \Big( B(x,y_1,y_2) - D(x,y_1,y_2)
- \alpha(x,y_1,y_2) \ U*\zeta_s(x,y_1,y_2) \Big)^2 \zeta_s(dx,dy_1,dy_2) ds\right)<+\infty,
\end{equation*}
which allows us to use this transform. 

\medskip
\noindent  In the case $B=D=\alpha=0$ the proof of  uniqueness can be  adapted from the general results of Fitzsimmons, see \cite{F92} Corollary 2.23: the Laplace transform of the process is uniquely identified using the extension of the  martingale problem  \eqref{eq:limit3} to test functions  depending smoothly on the time like $(s,x,y_{1},y_{2})\mapsto \psi_{t-s}f(x,y_{1},y_{2})$ for bounded functions $f$ (see \cite{F92} Proposition 2.13). 

\end{proof}
\newpage

\end{document}